\documentclass[11pt]{article}
\usepackage[left=1in,top=1in,right=1in,bottom=1in,letterpaper]{geometry}
\usepackage[ruled,vlined,linesnumbered]{algorithm2e}
\usepackage{amsmath, amsthm, amssymb, amsfonts, mathrsfs}
\usepackage{bbm}
\usepackage{bm}
\usepackage{color, colortbl}
\definecolor{LightCyan}{rgb}{0.88,1,1}
\usepackage{dirtytalk}
\usepackage{dsfont}
\usepackage{enumerate}
\usepackage{fullpage}
\usepackage{graphicx}
\usepackage{listings}
\usepackage{mathtools}
\usepackage{subfigure}
\usepackage{enumitem}
\usepackage{url}
\usepackage{xspace}
\usepackage{booktabs}
\usepackage{pifont}
\newcommand{\cmark}{\ding{51}}%
\newcommand{\xmark}{\ding{55}}%
\usepackage{makecell}
\usepackage{natbib}
\usepackage{multirow, array}

\usepackage[usenames,dvipsnames]{xcolor}

\usepackage[linktocpage,colorlinks,linkcolor=blue,anchorcolor=blue,citecolor=blue,urlcolor=blue,pagebackref]{hyperref}

\renewcommand*{\backref}[1]{\ifx#1\relax \else Page #1 \fi}
\renewcommand*{\backrefalt}[4]{%
  \ifcase #1 \footnotesize{(Not cited.)}%
  \or        \footnotesize{(Cited on page~#2.)}%
  \else      \footnotesize{(Cited on pages~#2.)}%
  \fi
}

\usepackage[capitalize,noabbrev]{cleveref}
\usepackage{tikz}

\newtheorem{theorem}{Theorem}
\newtheorem{lemma}{Lemma}

\newtheorem*{remark}{Remark}
\newtheorem{assumption}{Assumption}
\newtheorem{corollary}[theorem]{Corollary}
\newtheorem{definition}[theorem]{Definition}

\newcommand{\cD}{\mathcal{D}}

\newcommand{\cG}{\mathcal{G}}
\newcommand{\cH}{\mathcal{H}}

\newcommand{\cO}{\mathcal{O}}

\newcommand{\mA}{\mathbf{A}}
\newcommand{\mB}{\mathbf{B}}

\newcommand{\mI}{\mathbf{I}}

\newcommand{\mV}{\mathbf{V}}
\newcommand{\mW}{\mathbf{W}}
\newcommand{\mX}{\mathbf{X}}
\newcommand{\mY}{\mathbf{Y}}
\newcommand{\mZ}{\mathbf{Z}}
\newcommand{\mU}{\mathbf{U}}
\newcommand{\mF}{\mathbf{F}}

\def\setF{\mathscr{F}} % Set F

\newcommand{\naturalset}{\mathbb{N}}
\newcommand{\realset}{\mathbb{R}}

\newcommand{\E}{\mathbb{E}}

\newcommand{\norm}[1]{\left\|#1\right\|}

\newcommand{\T}{^\top}
\newcommand{\avein}{\frac{1}{n}\sum_{i=1}^n}
\newcommand{\avejn}{\frac{1}{n}\sum_{j=1}^n}
\newcommand{\bfone}{\mathbf{1}}
\newcommand{\bfonet}{\mathbf{1}^{\top}}

\DeclareMathOperator*{\argmin}{arg\,min\,}

\DeclareMathOperator{\dom}{dom}

\DeclareMathOperator{\prox}{\mathbf{prox}}

\def\<#1,#2>{\left\langle #1,#2\right\rangle}
\mathchardef\mhyphen="2D
\allowdisplaybreaks

\title{\bf A One-Sample Decentralized Proximal Algorithm for Non-Convex Stochastic Composite Optimization \footnote{The first two authors contributed equally to this work.}}

\author{Tesi Xiao\thanks{Department of Statistics, UC Davis. \texttt{texiao@ucdavis.edu}. }
\and Xuxing Chen\thanks{Department of Mathematics, UC Davis. \texttt{xuxchen@ucdavis.edu}. }
\and Krishnakumar Balasubramanian\thanks{Department of Statistics, UC Davis. \texttt{kbala@ucdavis.edu}. Supported by NSF grant DMS-2053918.} 
\and Saeed Ghadimi\thanks{Department of Management Sciences, University of Waterloo. \texttt{sghadimi@uwaterloo.ca}. Paritally supported by NSERC grant RGPIN-2021-02644.}
}

\begin{document}

\maketitle

\begin{abstract}
We focus on decentralized stochastic non-convex optimization, where $n$ agents work together to optimize a composite objective function which is a sum of a smooth term and a non-smooth convex term. To solve this problem, we propose two single-time scale algorithms: \texttt{Prox-DASA} and \texttt{Prox-DASA-GT}. These algorithms can find $\epsilon$-stationary points in $\cO(n^{-1}\epsilon^{-2})$ iterations using constant batch sizes (i.e., $\mathcal{O}(1)$). Unlike prior work, our algorithms achieve comparable complexity without requiring large batch sizes, more complex per-iteration operations (such as double loops), or stronger assumptions. Our theoretical findings are supported by extensive numerical experiments, which demonstrate the superiority of our algorithms over previous approaches. Our code is available at \url{https://github.com/xuxingc/ProxDASA}.
\end{abstract}

\section{Introduction}
\label{sec:setting}
%Decentralized optimization %has attracted increasing attention thanks to its . It 
%has been proven theoretically and empirically to be superior over centralized optimization %that training over a decentralized network can achieve linear speed-up, 
%as it can help mitigate the low network bandwidth, and preserve data privacy compared %with centralized methods 
%\citep{yuan2016convergence, lian2017can}. 

Decentralized optimization is a flexible paradigm for solving complex optimization problems in a distributed manner and has numerous applications in fields such as machine learning, robotics, and control systems. It has attracted increased attention due to the following benefits: (i) \textit{Robustness}: Decentralized optimization is more robust than centralized optimization because each agent can operate independently, making the system more resilient to failures compared to a centralized system where a coordinator failure or overload can halt the entire system. (ii) \emph{Privacy}: Decentralized optimization can provide greater privacy because each agent only has access to a limited subset of observations, which may help to protect sensitive information. (iii) \emph{Scalability}: Decentralized optimization is highly scalable as it can handle large datasets in a distributed manner, thereby solving complex optimization problems that are difficult or even impossible to solve in a centralized setting.

Specifically, we consider the following decentralized composite optimization problems in which $n$ agents collaborate to solve
\begin{equation}\label{eq:problem}
    \underset{x\in\realset^d}{\min}~ \Phi(x) \coloneqq  F(x) + \Psi(x), \ F(x) \coloneqq \frac{1}{n} \sum_{i=1}^{n} F_i(x),
\end{equation}
where each function $F_i(x)$ is a smooth function only known to the agent $i$; $\Psi(x)$ is non-smooth, convex, and shared across all agents; $\Phi(x)$ is bounded below by $\Phi_* > -\infty$. We consider the stochastic setting where the exact function values and derivatives of $F_{i}$'s are unavailable. In particular, we assume that $F_i(x)= \E_{\xi_i\sim \cD_i}[G_i(x, \xi_i)]$, where $\xi_i$ is a random vector and $\cD_i$ is the distribution used to generate samples for agent $i$. The agents form a connected and undirected network and can communicate with their neighbors to cooperatively solve \eqref{eq:problem}. The communication network can be represented with $\mathbb{G} = (\mathcal{V}, \mW)$ where $\mathcal{V} = \{v_1, v_2, \dots, v_n\}$ denotes all devices and $\mW = [w_{ij}]\in \realset^{n\times n}$ is the weighted adjacency matrix indicating how two agents are connected. 

%The presence of a non-smooth regularizer generalizes the decentralized smooth optimization problem, leading to many applications. For example, 
%Problem \eqref{eq:problem} can be used, for example, to perform decentralized training with fairness constraints with respect to certain pre-defined sensitive features \citep{donini2018empirical} by setting $\Psi$ to be an indicator function of a convex compact set. In additon, it can also be used to train \emph{sparse} deep neural networks that use non-smooth $L_0$ and $L_1$ regularizers on the weights to compressed models for deployment  on memory-constrained devices; see \citep{louizos2017learning, wen2016learning} for instance.
%This has led to the use of non-smooth $L_0$ and $L_1$ regularizers for learning sparse networks;  %In these circumstances, decentralized optimization algorithms designed for smooth optimization problems may not be applicable, and it becomes necessary to consider proximal algorithms for handling optimization problems with non-smooth components.
%In this work, we focus on the non-convex stochastic setting, where agents receive a stream of data at a constant rate \citep{nokleby2018stochastic}. 

A majority of the existing decentralized stochastic algorithms for solving~\eqref{eq:problem}, require large batch sizes to achieve convergence. %This is undesirable in the streaming data setup where the data cannot be stored in memory for long and should be processed as soon as possible. Other stochastic proximal 
The few algorithms that operate with constant batch sizes mainly rely on complicated variance reduction techniques and require stronger assumptions to establish convergence results. 
%However, given that training large-scale deep neural networks typically involves highly non-convex optimization problems and limited computational resources, using large batch sizes and complicated optimizers may not be desirable. 
To the best of our knowledge, the question of whether it is possible to develop decentralized stochastic optimization algorithms to solve ~\eqref{eq:problem} without the above mentioned limitations, remains unresolved. 

To address this, we propose the two decentralized stochastic proximal algorithms, \texttt{Prox-DASA} and \texttt{Prox-DASA-GT}, for solving~\eqref{eq:problem} and make the following \textbf{contributions}:
\begin{itemize}[leftmargin=1em]
    \item We show that \texttt{Prox-DASA} is capable of achieving convergence in both homogenous and bounded heterogeneous settings while \texttt{Prox-DASA-GT} works for general decentralized heterogeneous problems.
    \item We show that both algorithms find an $\epsilon$-stationary point in $\cO(n^{-1}\epsilon^{-2})$ iterations using only $\cO(1)$ stochastic gradient samples per agent and $m$ communication rounds at each iteration, where $m$ can be any positive integer. A topology-independent transient time can be achieved by setting $m=\lceil\frac{1}{\sqrt{1-\rho}}\rceil$, where $\rho$ is the second-largest eigenvalue of the communication matrix.
    \item Through extensive experiments, we demonstrate the superiority of our algorithms over prior works. 
\end{itemize}
A summary of our results and comparison to prior work is provided in Table~\ref{tab:summary}.  

%\textbf{Decentralized Smooth Optimization.} With the rise of contemporary distributed computing, decentralized smooth optimization, which can be viewed as consensus-constrained optimization over a communication network, started to gain popularity a few decades ago\citep{weiss1999multiagent, nedic201010, boyd2011distributed}. Various works have been proposed to analyze its convergence rates \citep{shi2014linear, shi2015extra, xu2015augmented, nedic2017achieving, yuan2018exact}. \cite{lian2017can} proposes D-PSGD and formally validates the advantages of decentralized training, which leads the trend of studying the stochastic version of decentralized optimization \citep{tang2018d, zhang2019decentralized, xin2021improved, pu2021distributed}.
% EXTRA\citep{shi2015extra}, Aug-DGM\citep{xu2015augmented}, DIGing\citep{nedic2017achieving}, Exact-Diffusion\citep{yuan2018exact}. 
% distributed ADMM\citep{chang2014multi, shi2014linear} \kb{the above paragraph adds nothing to the paper :) I suggest moving it to the appendix}

\textbf{Related Works on Decentralized Composite Optimization.} Motivated by wide applications in constrained optimization \citep{lee2013distributed, margellos2017distributed} and non-smooth problems with a composite structure as \eqref{eq:problem}, arising in signal processing \citep{ling2010decentralized, mateos2010distributed, patterson2014distributed} and machine learning \citep{facchinei2015parallel, hong2017prox}, several works have studied the decentralized composite optimization problem in~\eqref{eq:problem}, a natural generalization of smooth optimization. For example, \cite{shi2015proximal, li2019decentralized, alghunaim2019linearly, ye2020decentralized, xu2021distributed, li2021decentralized, sun2022distributed, wu2022unifying}  studied~\eqref{eq:problem} in the convex setting. Furthermore, \cite{facchinei2015parallel, di2016next, hong2017prox, zeng2018nonconvex, scutari2019distributed} studied~\eqref{eq:problem} in the deterministic setting. 

Although there has been a lot of research investigating decentralized composite optimization, the stochastic non-convex setting, which is more broadly applicable, still lacks a full understanding. \cite{wang2021distributed} proposes \texttt{SPPDM}, which uses a proximal primal-dual approach to achieve $\cO(\epsilon^{-2})$ sample complexity. \texttt{ProxGT-SA} and \texttt{ProxGT-SR-O} \citep{xin2021stochastic} incorporate stochastic gradient tracking and multi-consensus update in proximal gradient methods and obtain $\cO(n^{-1}\epsilon^{-2})$ and $\cO(n^{-1}\epsilon^{-1.5})$ sample complexity respectively, where the latter further uses a \texttt{SARAH} type variance reduction method \citep{pham2020proxsarah, wang2019spiderboost}. A recent work \citep{mancino2022proximal} proposes \texttt{DEEPSTORM}, which leverages the momentum-based variance reduction technique and gradient tracking to obtain $\cO(n^{-1}\epsilon^{-1.5})$ and $\tilde{\cO}(\epsilon^{-1.5})$ sample complexity under different stepsize choices. Nevertheless, existing works either require stronger assumptions \citep{mancino2022proximal} or increasing batch sizes \citep{wang2021distributed, xin2021stochastic}.

% \textbf{Decentralized Optimization}

% \begin{itemize}
%     \item \citep{bianchi2012convergence}: Deterministic + Nonconvex + Projected GD (asymptotically converge to a KKT point)
%     \item \citep{zeng2018nonconvex}: Deterministic + Nonconvex + Proximal GD
%     \item \citep{liu2022accelerated}: Deterministic + Convex + Accelerated + Dual Average
%     \item \citep{liu2022rate}: Deterministic + Non-convex + Dual Average
% \end{itemize}

\begin{table*}[t]
\centering
\renewcommand{\arraystretch}{2}
\caption{Comparison of decentralized proximal gradient based algorithms to find an $\epsilon$-stationary solution to stochastic composite optimization in the nonconvex setting. The sample complexity is defined as the number of required samples per agent to obtain an $\epsilon$-stationary point (see Definition \ref{def: stat&cons}). We omit a comparison with \texttt{SPPDM} \citep{wang2021distributed} as their definition of stationarity differs from ours; see Appendix for further discussions.} %\ref{sec: conserror}
\label{tab:summary}
\resizebox{\textwidth}{!}{%
\begin{tabular}{| c | c| c| c| c| c| c|}
%\toprule
\hline
 \textbf{Algorithm}   & \makecell{\bf Batch Size} & \makecell{\bf Sample \\ \bf Complexity } & \makecell{\bf Communication \\ \bf Complexity} & \makecell{\bf Linear \\ \bf Speedup?} & \bf Remark\\
%\midrule
\hline
 % \makecell{\texttt{SPPDM}\\ \citep{wang2021distributed}} &  $ \cO(\epsilon^{-1})$ &  $ \cO(n^{-1}\epsilon^{-2})$ &  $ \cO(n^{-1}\epsilon^{-1})$ & \cmark & \\
 %\hline
 \makecell{\texttt{ProxGT-SA}\\ \citep{xin2021stochastic}} &  $\cO(\epsilon^{-1})$ & $\cO(n^{-1}\epsilon^{-2})$ & $\mathcal{O}(\log (n) \epsilon^{-1})$ & \cmark & \\
 \hline
 \makecell{\texttt{ProxGT-SR-O}\\ \citep{xin2021stochastic}} &  $\cO(\epsilon^{-1})$ & $\cO(n^{-1}\epsilon^{-1.5})$ & $\mathcal{O}(\log (n) \epsilon^{-1})$ & \cmark & \makecell{double-loop; \\ mean-squared smoothness}\\
 \hline
 \multirow{2}{*}{\makecell{\texttt{DEEPSTORM} \\ \citep{mancino2022proximal}}} &  $\cO(\epsilon^{-0.5})$ then $\cO(1) ^ *$ & $\cO(n^{-1}\epsilon^{-1.5})$ & $\cO(n^{-1}\epsilon^{-1.5})$ & \cmark & \multirow{2}{*}{\makecell{two time-scale; \\mean-squared smoothness; \\ double gradient evaluations \\ per iteration}}\\ \cline{2-5}
 &  $\cO(1)$ & $\cO(\epsilon^{-1.5}|\log\epsilon|^{-1.5})$ & $\cO(\epsilon^{-1.5}|\log\epsilon|^{-1.5})$ & \xmark & \\
 \hline
%\rowcolor{LightCyan}
   \texttt{Prox-DASA}  (Alg. \ref{algo: Prox-DASA}) & $\mathcal{O}\left(1\right)$ &  $\mathcal{O}(n^{-1}\epsilon^{-2})$ &  $\mathcal{O}(n^{-1} \epsilon^{-2})$ & \cmark & \makecell{bounded heterogeneity}\\
 \hline
\rowcolor{LightCyan}
   \texttt{Prox-DASA-GT} (Alg. \ref{algo: Prox-DASA-GT})  & $\mathcal{O}\left(1\right)$ &  $\mathcal{O}(n^{-1}\epsilon^{-2})$ &  $\mathcal{O}(n^{-1}\epsilon^{-2})$ & \cmark & \\
\hline
%\bottomrule
\end{tabular}
}
\footnotesize{$^*$ It requires $\cO(\epsilon^{-0.5})$ batch size in the first iteration and then $\cO(1)$ for the rest (see $m_0$ in Algorithm 1 in \cite{mancino2022proximal}).}
\end{table*}

\section{Preliminaries}

\textbf{Notations.} $\|\cdot\|$ denotes the $\ell_2$-norm for vectors and Frobenius norm for matrices. $\|\cdot\|_2$ denotes the spectral norm for matrices. $\mathbf{1}$ represents the all-one vector, and $\mI$ is the identity matrix as a standard practice. We identify vectors at agent $i$ in the subscript and use the superscript for the algorithm step. For example, the optimization variable of agent $i$ at step $k$ is denoted as $x^k_i$, and $z^k_i$ is the corresponding dual variable. We use uppercase bold letters to represent the matrix that collects all the variables from nodes (corresponding lowercase) as columns. We add an overbar to a letter to denote the average over all nodes. For example, we denote the optimization variables over all nodes at step $k$ as
$\mX_k = \left[x_{1}^{k}, \dots, x_{n}^{k}\right].$
The corresponding average over all nodes can be thereby defined as
\begin{align*}
\bar x^k &= \avein x_{i}^{k} = \frac{1}{n}\mX_k \bfone,\\
\bar \mX_k &= [\bar x^k, \dots, \bar x^k] = \bar x^k \bfonet = \frac{1}{n}\mX_k \bfone \bfonet.
\end{align*}
For an extended valued function $\Psi: \realset^d \rightarrow \realset \cup \{+\infty\}$, its effective domain is written as $\dom(\Psi) = \{x \mid \Psi(x)<+\infty\}$. A function $\Psi$ is said to be proper if $\dom(\Psi)$ is non-empty. For any proper closed convex function $\Psi$, $x\in\realset^d$, and scalar $\gamma>0$, the proximal operator is defined as
\[
\prox_{\Psi}^{\gamma}(x) = \argmin_{y\in \realset^d}\left\{\frac{1}{2\gamma}\|y - x\|^2  + \Psi(y)\right\} .
\]
For $x, z\in \realset^d$ and $\gamma > 0$, the proximal gradient mapping of $z$ at $x$ is defined as
\[
\cG(x, z, \gamma) = \frac{1}{\gamma}\left( x -  \prox_{\Psi}^{\gamma}(x-\gamma z)\right).
\]
All random objects are properly defined in a probability space $(\Omega, \setF, \mathbb{P})$ and write  $x \in \cH$ if $x$ is $\cH$-measurable given a sub-$\sigma$-algebra $\cH \subseteq \setF$ and a random vector $x$. We use $\sigma(\cdot)$ to denote the $\sigma$-algebra generated by all the argument random vectors.

\textbf{Assumptions.} Next, we list and discuss the assumptions  made in this work.

\begin{assumption}\label{aspt:gossipMatrix}
The weighted adjacency matrix $\mW=(w_{ij})\in\realset^{n\times n}$ is symmetric and doubly stochastic, i.e., $$ \mW = \mW^\top,\ \mW \mathbf{1}_n = \mathbf{1}_n,\  w_{ij}\geq 0,\ \forall i, j, $$ and its eigenvalues satisfy $1=\lambda_1 > \lambda_2 \geq \dots \geq \lambda_n$ and $\rho\coloneqq \max\{|\lambda_2|, |\lambda_n|\}<1$.
\end{assumption}

\begin{assumption}\label{aspt:lipschitz-gradient}
All functions $\{F_{i}\}_{1\leq i\leq n}$ have Lipschitz continuous gradients with Lipschitz constants $L_{\nabla F_{i}}$, respectively. Therefore, $\nabla F$ is $L_{\nabla F}$-Lipchitz continous with $L_{\nabla F} ={\max}_{1\leq i\leq n} \{L_{\nabla F_i}\}$.
\end{assumption}

\begin{assumption}\label{aspt:Psi}
The function $\Psi: \realset^d \rightarrow \realset\cup\{+\infty\}$ is a closed proper convex function.
\end{assumption}

For stochastic oracles, we assume that each node $i$ at every iteration $k$ is able to obtain a local random data vector $\xi^{k}_i$. The induced natural filtration is given by $\setF_0 = \{\emptyset, \Omega\}$ and 
\[
    \setF_k \coloneqq \sigma\left(\xi^{t}_i \mid i =1 ,\dots, n, \, t=1,\dots, k \right), \forall k\geq 1.
\]
We require that the stochastic gradient $\nabla G_i(\cdot, \xi^{k+1}_i)$ is unbiased conditioned on the filteration $\setF_k$. 
\begin{assumption}[Unbiasness]\label{aspt: Unbiasness} For any $k\geq 0, x\in \setF_k$, and $1\leq i\leq n$, $$\E\left[\nabla G_i(x, \xi^{k+1}_i)\mid \setF_k\right] = \nabla F_{i}(x).$$
\end{assumption}
\begin{assumption}[Independence]\label{aspt: independence} For any $k\geq 0, 1\leq i, j\leq n, i\neq j,\ \xi_i^{k+1}$ is independent of $\setF_k$, and $\xi_i^{k+1}$ is independent of $\xi_j^{k+1}$.
\end{assumption}
In addition, we consider two standard assumptions on the variance and heterogeneity of stochastic gradients.
\begin{assumption}[Bounded variance]\label{aspt: Bounded Variance} For any $k\geq 0, x\in \setF_k$, and $1\leq i\leq n$, $$\E\left[\norm{\nabla G_i(x, \xi^{k+1}_i) - \nabla F_{i}(x)}^2\middle\vert \setF_k\right] \leq \sigma^2_i.$$ Let $\sigma^2 = \frac{1}{n}\sum_{i=1}^{n} \sigma_i^2$.
\end{assumption}

\begin{assumption}[Gradient heterogeneity]\label{aspt: Gradient heterogeneity} There exists a constant $\nu\geq0$ such that for all $1 \leq
i \leq n, x\in \realset^d$, $$\norm{\nabla F_i(x) - \nabla F(x)} \leq \nu.$$
\end{assumption}

\begin{remark}[Bounded gradient heterogeneity]
 The above assumption of gradient heterogeneity is standard \cite{lian2017can} and less strict than the bounded second moment assumption on stochastic gradients which implies lipschtizness of functions $\{F_i\}$. However, this assumption is only required for the convergence analysis of \texttt{Prox-DASA} and can be bypassed by employing a gradient tracking step. 
\end{remark}

\begin{remark}[Smoothness and mean-squared smoothness]
Our theoretical results of the proposed methods are only built on the smoothness assumption on functions $\{F_i\}$ without further assuming mean-squared smoothness assumptions on $\{G_{i,\xi}\}$, which is required in all variance reduction based methods in the literature, such as \texttt{ProxGT-SR-O}~\citep{xin2021stochastic} and \texttt{DEEPSTORM}~\citep{mancino2022proximal}. It is worth noting that a clear distinction in the lower bounds of sample complexity for solving stochastic optimization under two different sets of assumptions has been proven in \citep{arjevani2023lower}. Specifically, when considering the mean-squared smoothness assumption, the optimal sample complexity is $\mathcal{O}(\epsilon^{-1.5})$, whereas under smoothness assumptions, it is $\mathcal{O}(\epsilon^{-2})$. The proposed methods in this work achieve the optimal sample complexity under our weaker assumptions.
\end{remark}

\section{Algorithm}

Several algorithms have been developed to solve Problem \eqref{eq:problem} in the stochastic setting; see Table \ref{tab:summary}. However, the most recent two types of algorithms have certain drawbacks: (i) \textbf{increasing batch sizes}: \texttt{ProxGT-SA}, \texttt{Prox-SR-O}, and \texttt{DEEPSTORM} with constant step sizes (Theorem 1 in \citep{mancino2022proximal}) require batches of stochastic gradients with batch sizes inversely proportional to tolerance $\epsilon$; (ii) \textbf{algorithmic complexities}: \texttt{ProxGT-SR-O} and \texttt{DEEPSTORM} are either double-looped or two-time-scale, and require stochastic gradients evaluated at different parameter values over the same sample, i.e., $\nabla G_i(x, \xi)$ and $\nabla G_i(x', \xi)$. These variance reduction techniques are unfavorable when gradient evaluations are computationally expensive such as forward-backward steps for deep neural networks. (iii) \textbf{theoretical weakness}: the convergence analyses of \texttt{ProxGT-SR-O} and \texttt{DEEPSTORM} are established under the \emph{stronger} assumption of mean-squared lipschtizness of stochastic gradients. In addition, Theorem 2 in \citep{mancino2022proximal} fails to provide linear-speedup results for one-sample variant of \texttt{DEEPSTORM} with diminishing stepsizes.

\subsection{Decentralized Proximal Averaged Stochastic Approximation}

To address the above limitations, we propose \textbf{D}ecentralized \textbf{Prox}imal \textbf{A}veraged \textbf{S}tochastic \textbf{A}pprox-imation (\texttt{Prox-DASA}) which leverages a common averaging technique in stochastic
optimation \citep{ruszczynski2008merit, mokhtari2018conditional, ghadimi2020single} to reduce the error of gradient estimation. In particular, the sequences of dual variables $\mZ^k = [ z_1^k, \dots, z_n^k]$ that aim to approximate gradients are defined in the following recursion:
\begin{equation*}
\begin{split}
    \mZ^{k+1} &= \left\{(1-\alpha_k) \mZ^{k} + \alpha_k \mV^{k+1}\right\} \mW^m\\
    \mV^{k+1} &= [v_1^{k+1}, \dots, v_n^{k+1}],
\end{split}
\end{equation*}
where each $v_i^{k+1}$ is the local stochastic gradient evaluated at the local variable $x_i^k$. For complete graphs where each pair of graph vertices is connected by an edge and there is no consensus error for optimization variables, i.e., $\mW=\frac{1}{n}\bfone\bfonet$ and $x_i^k = x_j^k, \forall i,j$, the averaged dual variable over nodes $\bar z^k$ follows the same averaging rule as in centralized algorithms:
\begin{equation*}
\begin{split}
    \bar z^{k+1} &= (1-\alpha_k) \bar z^{k} + \alpha_k \bar v^{k+1}\\
    \E[\bar v^{k+1} |\setF_k] &= \nabla F(\bar x^k).
\end{split}
\end{equation*}
To further control the consensus errors, we employ a multiple consensus step for both primal and dual iterates $\{x^k_i, z^k_i\}$ which multiply the matrix of variables from all nodes by the weight matrix $m$ times. A pseudo code of \texttt{Prox-DASA} is given in Algorithm \ref{algo: Prox-DASA}.
\begin{algorithm}[t]
    \caption{\texttt{Prox-DASA}}\label{algo: Prox-DASA}
    \SetAlgoLined
    \KwIn{$x_i^0 = z_i^0 = \mathbf{0}, \gamma, \{\alpha_k\}_{\geq 0}, m$}
    \For{$k=0, 1,\dots,K-1$}{
        \CommentSty{\# Local Update}\\
        \For{$i=1,2,\dots,n$ (in parallel)}{
            $y_i^k = \prox_{\Psi}^{\gamma}\left(x_{i}^{k} - \gamma z_{i}^{k}\right)$\\
            %$y_{i}^{k} = \argmin_{y\in \realset^d}\left\{\<z_{i}^{k}, y - x_{i}^{k}> + \frac{1}{2\gamma}\|y - x_{i}^{k}\|^2 + \Psi(y)\right\}$ \\
            $\tilde{x}_{i}^{k+1} = (1- \alpha_k)x_{i}^{k} + \alpha_ky_{i}^{k}$\\
            \CommentSty{\# Compute stochastic gradient}\\
            $v_{i}^{k+1} = \nabla G_{i}(x_{i}^{k}, \xi_{i}^{k+1})$\\
            % $\tilde u_i^{k+1} = u_i^k + v_i^{k+1} - v_i^k$ \\
            $\tilde{z}_{i}^{k+1} = (1 - \alpha_k)z_{i}^{k} + \alpha_k v_{i}^{k+1}$\\
        }
        \CommentSty{\# Communication}\\
        $[x_1^{k+1}, \dots, x_{n}^{k+1}] = [\tilde{x}_1^{k+1}, \dots, \tilde{x}_{n}^{k+1}]\mW^m$\\
        $[z_1^{k+1}, \dots, z_{n}^{k+1}] = [\tilde{z}_1^{k+1}, \dots, \tilde{z}_{n}^{k+1}]\mW^m$
    }
\end{algorithm}

\subsection{Gradient Tracking}

The constant $\nu$ defined in Assumption \ref{aspt: Gradient heterogeneity} measures the heterogeneity between local gradients and global gradients, and hence the variance of datasets of different agents. To remove $\nu$ in the complexity bound, \cite{tang2018d} proposed the $\text{D}^2$ algorithm, which modifies the $x$ update in D-PSGD \citep{lian2017can}. However, it requires one additional assumption on the eigenvalues of the mixing matrix $\mW$. Here we adopt the gradient tracking technique, which was first introduced to deterministic distributed optimization to improve the convergence rate \citep{xu2015augmented, di2016next, nedic2017achieving, qu2017harnessing}, and was later proved to be useful in removing the data variance (i.e., $\nu$) dependency in the stochastic case \citep{zhang2019decentralized, lu2019gnsd, pu2021distributed, koloskova2021improved}. In the convergence analysis of \texttt{Prox-DASA}, an essential step is to control the heterogeneity of stochastic gradients, i.e., $\E[\norm{\mV^{k+1} - \bar \mV^{k+1}}^2]$, which requires bounded heterogeneity of local gradients (Assumption \ref{aspt: Gradient heterogeneity}). To pypass this assumption, we employ a gradient tracking step by replacing $\mV^{k+1}$ with pseudo stochastic gradients $\mU^{k+1} = [u_1^{k+1}, \dots, u_n^{k+1}]$, which is updated as follows:
\begin{equation*}
    \mU^{k+1} = \left(\mU^k + \mV^{k+1} - \mV^{k}\right) \mW^m.
\end{equation*}
Provided that $\mU^0 = \mV^0$ and $\mW\bfone = \bfone$, one can show that $\bar u^k = \bar v^k$ at each step $k$. In addition, with the consensus procedure over $\mU^k$, the heterogeneity of pseudo stochastic gradients $\E[\norm{\mU^{k+1} - \bar \mU^{k+1}}^2]$ can be bounded above. The proposed algorithm, named as \texttt{Prox-DASA} with Gradient Tracking (\texttt{Prox-DASA-GT}), is presented in Algorithm \ref{algo: Prox-DASA-GT}.
\begin{algorithm}[t]
    \caption{\texttt{Prox-DASA-GT}}\label{algo: Prox-DASA-GT}
    \SetAlgoLined
    \KwIn{$x_i^0 = z_i^0 = u_i^0 = \mathbf{0}, \gamma, \{\alpha_k\}_{\geq 0}, m$}
    \For{$k=0, 1,\dots,K$}{
        \CommentSty{\# Local Update}\\
        \For{$i=1,2,\dots,n$ (in parallel)}{
            $y_{i}^{k} = \prox_{\Psi}^{\gamma}\left(x_{i}^{k} - \gamma z_{i}^{k}\right)$\\
            $\tilde{x}_{i}^{k+1} = (1- \alpha_k)x_{i}^{k} + \alpha_ky_{i}^{k}$\\
            \CommentSty{\# Compute stochastic gradient}\\
            $v_{i}^{k+1} = \nabla G_{i}(x_{i}^{k}, \xi_{i}^{k+1})$\\
            $\tilde u_i^{k+1} = u_i^k + v_i^{k+1} - v_i^k$ \\
            $\tilde{z}_{i}^{k+1} = (1 - \alpha_k)z_{i}^{k} + \alpha_k \tilde{u}_{i}^{k+1}$\\
        }
        \CommentSty{\# Communication}\\
        $[x_1^{k+1}, \dots, x_{n}^{k+1}] = [\tilde{x}_1^{k+1}, \dots, \tilde{x}_{n}^{k+1}]\mW^m$\\
        $[u_1^{k+1}, \dots, u_{n}^{k+1}] = [\tilde{u}_1^{k+1}, \dots, \tilde{u}_{n}^{k+1}]\mW^m$\\
        $[z_1^{k+1}, \dots, z_{n}^{k+1}] = [\tilde{z}_1^{k+1}, \dots, \tilde{z}_{n}^{k+1}]\mW^m$
    }
\end{algorithm}

\subsection{Consensus Algorithm}
In practice, we can leverage accelerated consensus algorithms, e.g., \cite{liu2011accelerated, olshevsky2017linear}, to speed up the multiple consensus step $\mW^m$ to achieve improved communication complexities when $m>1$. Specifically, we can replace $\mW^m$ by a Chebyshev-type polynomial of $\mW$ as described in Algorithm \ref{algo: acc-consensus}, which can improve the $\rho$-dependency of the communication complexity from a factor of $\frac{1}{1-\rho}$ to $\frac{1}{\sqrt{1-\rho}}$.

\begin{algorithm}[th]
    \caption{Chebyshev Mixing Protocol}\label{algo: acc-consensus}
    \SetAlgoLined
    \KwIn{Matrix $\mX$, mixing matrix $\mW$, rounds $m$}
    Set $\mA_0= \mX, \mA_1 = \mX\mW, \rho=\max\{|\lambda_2(\mW)|, |\lambda_n(\mW)|\}<1, \mu_0 = 1, \mu_1 = \frac{1}{\rho}$\\
    \For{$t=1,\dots,m-1$}{
        $\mu_{t+1} = \frac{2}{\rho}\mu_t - \mu_{t-1}$\\
        $\mA_{t+1} = \frac{2\mu_t}{\rho\mu_{t+1}}\mA_t \mW - \frac{\mu_{t-1}}{\mu_{t+1}} \mA_{t-1}$
    }
    \KwOut{$\mA_m$}
\end{algorithm}

\section{Convergence Analysis}

\subsection{Notion of Stationarity}

For centralized optimization problems with non-convex objective function $F(x)$, a standard measure of non-stationarity of a point $\bar x$ is the squared norm of proximal gradient mapping of $\nabla F(\bar x)$ at $\bar x$, i.e.,
\begin{equation*}
\norm{\cG(\bar x, \nabla F(\bar x), \gamma)}^2 = \norm{\frac{1}{\gamma}\left(x - \prox_{\Psi}^{\gamma}(\bar x-\gamma \nabla F(\bar x))\right)}^2.
\end{equation*}
For the smooth case where $\Psi(x)\equiv 0$, the above measure is reduced to $\norm{\nabla F(\bar x)}^2$.

However, in the decentralized setting with a connected network $\mathbb{G}$, we solve the following equivalent reformulated consensus optimization problem:
\begin{equation}\label{eq:problem-consensus}
\begin{split}
    \underset{x_1,\dots,x_n\in\realset^d}{\min}& \quad \frac{1}{n} \sum_{i=1}^{n}\left\{ F_i(x_i) + \Psi(x_i)\right\}\\
    \text{s.t.}\quad &\quad    x_i = x_j,\ \forall (i,j).
\end{split}
\end{equation}
To measure the non-stationarity in Problem \eqref{eq:problem-consensus},  one should consider not only the stationarity violation at each node but also the consensus errors over the network. Therefore, \citet{xin2021stochastic} and \citet{mancino2022proximal} define an $\epsilon$-stationary point $\mX= [x_1, \dots, x_n]$ of Problem \ref{eq:problem-consensus} as
\begin{equation}\label{def: previous stationarity}
\E\left[\frac{1}{n}\sum_{i=1}^{n}\left\{\norm{\cG(x_i, \nabla F(x_i), \gamma)}^2 + L_{\nabla F}^2\norm{x_i - \bar x}^2\right\}\right] \leq \epsilon.    
\end{equation}
In this work, we use a general measure as follows.
\begin{definition}\label{def: stat&cons}
    Let $\mX = [x_1, \dots, x_n]$ be random vectors generated by a decentralized algorithm to solve Problem \ref{eq:problem-consensus} and $\bar x = \frac{1}{n}\sum_{i=1}^{n}x_i$. We say that $\mX$ is an $\epsilon$-stationary point of Problem \ref{eq:problem-consensus} if 
    \begin{align}
    &\text{(stationarity violation)} &\E\left[\norm{\cG(\bar x, \nabla F(\bar x), \gamma)}^2\right] \leq \epsilon, \notag\\
    &\text{(consensus error)} &\textstyle \E\left[\frac{L_{\nabla F}^2}{n}\norm{\mX - \bar \mX}^2\right] \leq \epsilon \notag.
    \end{align}
\end{definition}
The next inequality characterizes the difference between the gradient mapping at $\bar x$ and $x_i$, which relates our definition to \eqref{def: previous stationarity}. Noting that by non-expansiveness of the proximal operator, we have $\norm{\cG(x_i, \nabla F(x_i), \gamma) - \cG(\bar x, \nabla F(\bar x), \gamma)}\leq \tfrac{2+\gamma L_{\nabla F}}{\gamma}\norm{x_i - \bar x}$, implying 
\begin{align*}
    \frac{1}{n}\sum_{i=1}^{n}\norm{\cG(x_i, \nabla F(x_i), \gamma)}^2 \lesssim \norm{\cG(\bar x, \nabla F(\bar x), \gamma)}^2 + \tfrac{1}{\gamma^2 n}\norm{\mX - \bar \mX}^2.
\end{align*}

\subsection{Main Results}

We present the complexity results of our algorithms below.

\begin{theorem}\label{thm: main}
    Suppose Assumptions \ref{aspt:gossipMatrix}, \ref{aspt:lipschitz-gradient}, \ref{aspt:Psi}, \ref{aspt: Unbiasness}, \ref{aspt: independence}, \ref{aspt: Bounded Variance} hold and the total number of iterations $K\geq K_0$, where $K_0$ is a constant that only depends on constants $(n, L_{\nabla F}, \varrho(m), \gamma)$, where $\varrho(m) = \tfrac{(1+\rho^{2m})\rho^{2m}}{(1-\rho^{2m})^2}$. Let $C_0$ be some initialization-dependent constant and $R$ be a random integer uniformly distributed over $\{1, 2, \dots, K\}$. Suppose we set $\alpha_k \asymp \sqrt{\tfrac{n}{K}}, \gamma \asymp \tfrac{1}{L_{\nabla F}}$.
    \begin{itemize}[leftmargin=0pt]
        \item[] \textbf{(Prox-DASA)} Suppose in addition Assumption \ref{aspt: Gradient heterogeneity} also holds. The, for  Algorithm \ref{algo: Prox-DASA} we have
        \begin{align*}
            &\E\left[\norm{\cG(\bar x^R, \nabla F(\bar x^R), \gamma)}^2\right] \lesssim \frac{\gamma^{-1} C_0 + \sigma^2}{\sqrt{nK}} + \frac{n(\sigma^2 +\gamma^{-2}\nu^2)\varrho(m)}{K},\\
            &\E\left[\norm{\bar z^R - \nabla F(\bar x^R))}^2\right] \lesssim \frac{\gamma^{-1} C_0 + \sigma^2}{\sqrt{nK}} + \frac{n(\sigma^2 +\gamma^{-2}\nu^2)\varrho(m)}{K},\\
            &\E\left[\frac{L_{\nabla F}^2}{n}\norm{\mX_R - \bar \mX_R}^2+ \frac{1}{n}\norm{\mZ_R - \bar \mZ_R}^2\right] \lesssim \frac{n(\sigma^2 +\gamma^{-2}\nu^2)\varrho(m)}{K}.
        \end{align*}

        \item[] \textbf{(Prox-DASA-GT)} For Algorithm \ref{algo: Prox-DASA-GT} we have 
        \begin{align*}
            &\E\left[\norm{\cG(\bar x^R, \nabla F(\bar x^R), \gamma)}^2\right] \lesssim \frac{\gamma^{-1} C_0 + \sigma^2}{\sqrt{nK}} + \frac{n\sigma^2\varrho(m)}{K},\\
            &\E\left[\norm{\bar z^R - \nabla F(\bar x^R)}^2\right] \lesssim \frac{\gamma^{-1} C_0 + \sigma^2}{\sqrt{nK}} + \frac{n\sigma^2\varrho(m)}{K},\\
            &\E\left[\frac{L_{\nabla F}^2}{n}\norm{\mX_R - \bar \mX_R}^2 + \frac{1}{n}\norm{\mZ_R - \bar \mZ_R}^2\right] \lesssim \frac{n\sigma^2\varrho(m)}{K}.
        \end{align*}
    \end{itemize}
\end{theorem}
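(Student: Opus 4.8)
The strategy is to produce three coupled one‑step estimates --- a descent inequality for the averaged objective $\Phi(\bar x^k)$, a recursion for the dual tracking error $e^k := \bar z^k-\nabla F(\bar x^k)$, and contraction recursions for the consensus errors $\norm{\mX_k-\bar\mX_k}^2,\norm{\mZ_k-\bar\mZ_k}^2$ (plus $\norm{\mU_k-\bar\mU_k}^2$ for \texttt{Prox-DASA-GT}) --- and then stitch them together into a Lyapunov argument that is summed and telescoped. Throughout I use that $\mW^m$ is doubly stochastic, so averaging annihilates the communication step: $\bar x^{k+1}=(1-\alpha_k)\bar x^k+\alpha_k\bar y^k$ and $\bar z^{k+1}=(1-\alpha_k)\bar z^k+\alpha_k\bar v^{k+1}$ (resp.\ $\bar u^{k+1}$), and that $\bar x^{k+1},\bar y^k$ are $\setF_k$-measurable while only $\bar v^{k+1}$ carries fresh $\xi^{k+1}$-randomness. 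I also write $\hat y^k:=\prox_\Psi^\gamma(\bar x^k-\gamma\bar z^k)$, so that $\hat y^k-\bar x^k=-\gamma\cG(\bar x^k,\bar z^k,\gamma)$.

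\textbf{Step 1 (descent).} Combine $L_{\nabla F}$-smoothness of $F$, convexity of $\Psi$ (through $\Psi(\bar x^{k+1})\le(1-\alpha_k)\Psi(\bar x^k)+\tfrac{\alpha_k}{n}\sum_i\Psi(y_i^k)$), and the three-point inequality for each per-node prox step at $y_i^k$ to bound $\langle\nabla F(\bar x^k),\bar y^k-\bar x^k\rangle+\Psi(\bar y^k)-\Psi(\bar x^k)$. The one nonroutine point is that $\bar y^k$ is the \emph{average of proximal maps}, not the proximal map of the averages; I control $\norm{\bar y^k-\hat y^k}^2\le \tfrac2n(\norm{\mX_k-\bar\mX_k}^2+\gamma^2\norm{\mZ_k-\bar\mZ_k}^2)$ via non-expansiveness of $\prox_\Psi^\gamma$, and similarly replace per-node gradient-mapping norms by $\norm{\cG(\bar x^k,\bar z^k,\gamma)}^2$ up to consensus errors using the Lipschitz-type estimate stated just before the theorem. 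After a Young split of the error term $\langle\nabla F(\bar x^k)-\bar z^k,\hat y^k-\bar x^k\rangle$ this yields, with an absolute constant $c$,
\[
\E[\Phi(\bar x^{k+1})\mid\setF_k]\le \Phi(\bar x^k)-\tfrac{\alpha_k\gamma}{2}\norm{\cG(\bar x^k,\bar z^k,\gamma)}^2+c\,\alpha_k\gamma\norm{e^k}^2+\tfrac{c\,\alpha_k}{\gamma n}\big(\norm{\mX_k-\bar\mX_k}^2+\gamma^2\norm{\mZ_k-\bar\mZ_k}^2\big)+c\,L_{\nabla F}\alpha_k^2\norm{\bar y^k-\bar x^k}^2 .
\]

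\textbf{Step 2 (tracking error and consensus).} Expanding, $e^{k+1}=(1-\alpha_k)e^k+\alpha_k\big(\bar v^{k+1}-\tfrac1n\sum_i\nabla F_i(x_i^k)\big)+\alpha_k\big(\tfrac1n\sum_i\nabla F_i(x_i^k)-\nabla F(\bar x^k)\big)+\big(\nabla F(\bar x^k)-\nabla F(\bar x^{k+1})\big)$; the first bracket is mean-zero given $\setF_k$ with conditional second moment $\le\sigma^2/n$, the second has norm $\le\tfrac{L_{\nabla F}}{\sqrt n}\norm{\mX_k-\bar\mX_k}$ (the heterogeneity $\nu$ drops here because $\tfrac1n\sum_i\nabla F_i(\bar x^k)=\nabla F(\bar x^k)$ exactly), and the last is $\le L_{\nabla F}\alpha_k\norm{\bar y^k-\bar x^k}$. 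Using $\norm{(1-\alpha)u+w}^2\le(1-\alpha)\norm{u}^2+\alpha^{-1}\norm{w}^2$ and $\norm{\bar y^k-\bar x^k}^2\le 2\gamma^2\norm{\cG(\bar x^k,\bar z^k,\gamma)}^2+\tfrac{4}{n}(\norm{\mX_k-\bar\mX_k}^2+\gamma^2\norm{\mZ_k-\bar\mZ_k}^2)$ gives $\E[\norm{e^{k+1}}^2\mid\setF_k]\le(1-\alpha_k)\norm{e^k}^2+cL_{\nabla F}^2\gamma^2\alpha_k\norm{\cG(\bar x^k,\bar z^k,\gamma)}^2+\tfrac{cL_{\nabla F}^2\alpha_k}{n}(\norm{\mX_k-\bar\mX_k}^2+\gamma^2\norm{\mZ_k-\bar\mZ_k}^2)+\tfrac{\alpha_k^2\sigma^2}{n}$. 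For the consensus errors, write $\mW^m$ as $\tfrac1n\bfone\bfonet$ plus a matrix of spectral norm $\rho^m$; since $\tilde\mX^{k+1}-\bar{\tilde\mX}^{k+1}$ etc.\ have zero row sums, each consensus matrix contracts by $\rho^m$, and convexity of $\norm{\cdot}^2$ with non-expansiveness of the prox gives $\norm{\mY_k-\bar\mY_k}^2\le2\norm{\mX_k-\bar\mX_k}^2+2\gamma^2\norm{\mZ_k-\bar\mZ_k}^2$; the dual side picks up $\E\norm{\mV^{k+1}-\bar\mV^{k+1}}^2\lesssim n\sigma^2+L_{\nabla F}^2\norm{\mX_k-\bar\mX_k}^2+n\nu^2$. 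For \texttt{Prox-DASA-GT} the identity $\bar u^k=\bar v^k$ and the extra consensus step on $\mU$ replace this by a recursion for $\E\norm{\mU^{k+1}-\bar\mU^{k+1}}^2$ with source $\lesssim n\sigma^2+L_{\nabla F}^2\norm{\mX_k-\bar\mX_k}^2+L_{\nabla F}^2\gamma^2 n\alpha_k^2\norm{\cG}^2$, i.e.\ with no $\nu$. Taking expectations, under $K\ge K_0$ one has $\rho^{2m}(1+O(\alpha_k))<1$, so the resulting $2\times2$ (resp.\ $3\times3$) linear system is stable; unrolling from the near-zero initialization and averaging over $k$ gives $\tfrac1K\sum_k\E\big[\tfrac{L_{\nabla F}^2}{n}\norm{\mX_k-\bar\mX_k}^2+\tfrac1n\norm{\mZ_k-\bar\mZ_k}^2\big]\lesssim \alpha^2(\sigma^2+\gamma^{-2}\nu^2)\varrho(m)$, the factor $\varrho(m)=\tfrac{(1+\rho^{2m})\rho^{2m}}{(1-\rho^{2m})^2}$ arising from solving the coupled system (and in particular from handling the cross term between $\mZ_k-\bar\mZ_k$ and the gradient bias by keeping its self-referential $\norm{\mZ_k-\bar\mZ_k}$-structure and resolving the induced quadratic, which pushes the $\nu$-contribution to the $\alpha_k^2$, hence $1/K$, order); for \texttt{Prox-DASA-GT} set $\nu=0$.

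\textbf{Step 3 (assembling).} Take $\Lambda_k:=\Phi(\bar x^k)-\Phi_*+\beta\gamma\norm{e^k}^2$. Choosing $\gamma=c_0/L_{\nabla F}$ with $c_0$ a small absolute constant and $\beta$ accordingly, the $+c\alpha_k\gamma\norm{e^k}^2$ in the descent is absorbed by the $-\beta\gamma\alpha_k\norm{e^k}^2$ coming from the tracking recursion, and the $\norm{\cG}^2$ term that the tracking recursion injects --- of order $L_{\nabla F}^2\gamma^3\alpha_k\norm{\cG}^2=c_0^2\gamma\alpha_k\norm{\cG}^2$ --- is dominated by $-\tfrac{\alpha_k\gamma}{2}\norm{\cG}^2$, leaving $\E[\Lambda_{k+1}\mid\setF_k]\le\Lambda_k-c\alpha_k\gamma\norm{\cG(\bar x^k,\bar z^k,\gamma)}^2+\tfrac{c\alpha_k}{\gamma n}(\norm{\mX_k-\bar\mX_k}^2+\gamma^2\norm{\mZ_k-\bar\mZ_k}^2)+\tfrac{c\alpha_k^2\sigma^2}{n}$. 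Summing over $k=1,\dots,K$, using $\Lambda_{K+1}\ge0$, and inserting the Step~2 bound on $\tfrac1K\sum_k(\norm{\mX_k-\bar\mX_k}^2+\gamma^2\norm{\mZ_k-\bar\mZ_k}^2)$ --- note that here the consensus errors enter multiplied by the extra $\alpha_k$, which is exactly what converts their $\Theta(\alpha^2)$ average into the $\Theta(n/K)$ term rather than $\Theta(\sqrt{n/K})$ --- and dividing by $c\gamma\alpha K$ with $\alpha_k\asymp\sqrt{n/K}$ yields $\E\norm{\cG(\bar x^R,\bar z^R,\gamma)}^2\lesssim\tfrac{\gamma^{-1}C_0+\sigma^2}{\sqrt{nK}}+\tfrac{n(\sigma^2+\gamma^{-2}\nu^2)\varrho(m)}{K}$. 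Averaging the tracking recursion over $k$ (and substituting the just-obtained $\norm{\cG}^2$ bound with a small coefficient) gives the identical bound for $\E\norm{\bar z^R-\nabla F(\bar x^R)}^2$; the consensus bound is Step~2; and $\E\norm{\cG(\bar x^R,\nabla F(\bar x^R),\gamma)}^2\le2\E\norm{\cG(\bar x^R,\bar z^R,\gamma)}^2+2\E\norm{e^R}^2$ via non-expansiveness of the prox. For \texttt{Prox-DASA-GT} the same steps go through with the $\mU$-consensus recursion in place of the $\mV$-term and $\nu=0$. The main obstacle is the consensus layer: decoupling the $(\mX,\mZ)$ --- and in the GT case $(\mX,\mZ,\mU)$ --- recursions cleanly, handling the bias/cross term so that $\nu$ enters only at the $1/K$ order, and simultaneously choosing $\gamma$ and the Lyapunov weight $\beta$ small enough that the feedback loop (consensus $\to$ tracking $\to$ descent $\to$ consensus) does not destabilize the telescoped sum.
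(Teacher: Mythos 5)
Your proposal is correct, and its skeleton --- a one-step bound, a recursion for the dual error $e^k=\bar z^k-\nabla F(\bar x^k)$ whose martingale part has conditional variance $\sigma^2/n$, consensus contractions by $\rho^{2m}$ that produce the factor $\varrho(m)$ (with the $\nu$-dependent source for $\mV$ in \texttt{Prox-DASA} and the $\nu$-free source $\norm{\mV_{k+1}-\mV_k}^2$ for the tracked $\mU$ in \texttt{Prox-DASA-GT}), and a telescoped, weighted-random-index average with $\alpha_k\asymp\sqrt{n/K}$ --- matches the paper's proof; your Step 2 is essentially the paper's consensus lemmas and dual-error lemma verbatim. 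The genuine difference is the potential function. The paper uses the merit function $W(\bar x,\bar z)=\Phi(\bar x)-\Phi_*+\Psi(\bar x)-\eta(\bar x,\bar z)+\lambda\norm{\nabla F(\bar x)-\bar z}^2$ with $\eta(x,z)=\min_y\{\<z,y-x>+\tfrac{1}{2\gamma}\norm{y-x}^2+\Psi(y)\}$, and produces the negative gradient-mapping term through the $\Psi-\eta$ part together with the $C_\gamma$-smoothness of $\eta$ (a Moreau-envelope argument), collecting all mean-zero cross terms into a single $r^{k+1}$; you instead take the two-term Lyapunov $\Phi(\bar x^k)-\Phi_*+\beta\gamma\norm{e^k}^2$ and extract the negative $\norm{\cG(\bar x^k,\bar z^k,\gamma)}^2$ directly from the per-node prox optimality (three-point) inequality with reference point $\bar x^k$, using non-expansiveness and the centering identity $\sum_i(z_i^k-\bar z^k)=0$ to pay only consensus corrections --- effectively the paper's bound on $\Psi(\bar y^k)-\Psi(y_+^k)$ deployed inside the descent step rather than inside a merit function. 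Your route is more elementary (no $\eta$, no $C_\gamma$, no smoothness-of-envelope lemma); the paper's choice buys the structural facts that $W\geq\tfrac{\gamma}{2}\norm{\cG}^2$ and that the stepsize conditions are stated once through $C_\gamma$. The only place your sketch is materially thinner than the paper is the gradient-tracking consensus bound: the paper's lemma there carries a $\varrho(m)^2$ prefactor and a feedback term $\alpha_k^2\E[\norm{\bar x^k-\bar y^k}^2]$, and absorbing both into the telescoped sum requires the explicit conditions $\varrho(m)\alpha_k^2\lesssim 1$, $\varrho(m)\alpha_k\lesssim (L_{\nabla F}\gamma)^{-1}$ and $\varrho(m)^2\alpha_k^4\lesssim 1$ (this is what the hypothesis $K\geq K_0$ supplies); you assert stability of the coupled linear system, which is the right idea, but these conditions should be made explicit when the GT case is written out.
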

In Theorem \ref{thm: main} for simplicity we assume $\gamma \asymp \frac{1}{L_{\nabla F}}$, which can be relaxed to $\gamma > 0$. We have the following corollary characterizing the complexity of Algorithm \ref{algo: Prox-DASA} and \ref{algo: Prox-DASA-GT} for finding $\epsilon$-stationary points. The proof is immediate.
\begin{corollary}\label{cor: complexity}
    Under the same conditions of Theorem \ref{thm: main}, provided that $K\gtrsim n^3 \varrho(m)$, for any $\epsilon > 0$ the sample complexity per agent for finding $\epsilon$-stationary points
    in Algorithm \ref{algo: Prox-DASA} and \ref{algo: Prox-DASA-GT} are $\cO(\max\{n^{-1}\epsilon^{-2}, K_T\})$ where the transient time $K_T \asymp \max\{K_0, n^3 \varrho(m)\}$.
\end{corollary}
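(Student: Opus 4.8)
The plan is to convert the non-asymptotic rate bounds of Theorem~\ref{thm: main} into an iteration count and then translate iterations into per-agent samples. The first and essentially trivial observation is that in both Algorithm~\ref{algo: Prox-DASA} and Algorithm~\ref{algo: Prox-DASA-GT} each agent $i$ draws exactly one fresh sample $\xi_i^{k+1}$ per iteration to form $v_i^{k+1} = \nabla G_i(x_i^k, \xi_i^{k+1})$; in particular the gradient-tracking update $\tilde u_i^{k+1} = u_i^k + v_i^{k+1} - v_i^k$ reuses the already-computed $v_i^k$ and needs no extra oracle call. Hence the number of stochastic-gradient evaluations per agent after $K$ iterations is $\cO(K)$, so the per-agent sample complexity equals the iteration complexity up to the $\cO(1)$ batch-size constant. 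It therefore suffices to exhibit the smallest $K$ for which all the right-hand sides in Theorem~\ref{thm: main} are at most $\epsilon$, subject to the standing requirements $K \geq K_0$ and $K \gtrsim n^3\varrho(m)$.

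Next I would absorb the problem constants. With $\gamma \asymp 1/L_{\nabla F}$ the factors $\gamma^{-1}, \gamma^{-2}, L_{\nabla F}$ are problem-dependent constants, and together with $C_0, \sigma^2, \nu^2$ (and $\varrho(m) = \cO(1)$ under the prescribed $m = \lceil (1-\rho)^{-1/2}\rceil$) they are hidden in the $\lesssim$. The bounds then collapse to a statistical term decaying like $(nK)^{-1/2}$ and a transient term decaying like $\varrho(m)\,n/K$, so that both the stationarity-violation bound $\E[\|\cG(\bar x^R, \nabla F(\bar x^R), \gamma)\|^2]$ and the consensus bound $\E[L_{\nabla F}^2 n^{-1}\|\mX_R - \bar \mX_R\|^2]$ required by Definition~\ref{def: stat&cons} are controlled by $(nK)^{-1/2} + \varrho(m)\,n/K$ (the consensus bound by $\varrho(m)\,n/K$ alone, and the combined $\mX,\mZ$ bound of the theorem dominates the $\mX$-only quantity in the definition).

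I would then plug in the claimed choice $K \asymp \max\{n^{-1}\epsilon^{-2}, K_T\}$ with $K_T \asymp \max\{K_0, n^3\varrho(m)\}$ and verify the two pieces separately. Since $K \geq n^{-1}\epsilon^{-2}$ gives $nK \geq \epsilon^{-2}$, the statistical term $(nK)^{-1/2} \leq \epsilon$ immediately. For the transient term I would run a short two-regime case analysis driven by the hypothesis $K \gtrsim n^3\varrho(m)$: in the high-accuracy regime $n^{-1}\epsilon^{-2} \geq K_T$ one has $K \asymp n^{-1}\epsilon^{-2}$, whence $\varrho(m)\,n/K \asymp n^2\varrho(m)\epsilon^2 \lesssim \epsilon$ precisely because $n^{-1}\epsilon^{-2} \geq n^3\varrho(m)$; in the complementary regime $K \asymp K_T \gtrsim n^3\varrho(m)$ forces $\varrho(m)\,n/K \lesssim n^{-2} \lesssim \epsilon$. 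Both pieces being $\lesssim \epsilon$, the chosen $K$ certifies an $\epsilon$-stationary point, and the sample-complexity claim follows from the first paragraph.

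The step I expect to require the most care is the transient-term verification, since the naive requirement $\varrho(m)\,n/K \leq \epsilon$ reads $K \gtrsim \varrho(m)\,n\epsilon^{-1}$, a third scale not explicitly listed in the statement. The crux is to show that this cross scale $\varrho(m)\,n\epsilon^{-1}$ is always subsumed by $\max\{n^{-1}\epsilon^{-2}, n^3\varrho(m)\}$: it is dominated by $n^{-1}\epsilon^{-2}$ when $\epsilon \lesssim (n^2\varrho(m))^{-1}$ and by $n^3\varrho(m)$ when $\epsilon \gtrsim n^{-2}$, and these two ranges cover all $\epsilon$ exactly because $\varrho(m) = \cO(1)$ for the prescribed $m$. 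Making this domination explicit—together with confirming $K \geq K_0$ so that Theorem~\ref{thm: main} applies at the chosen $K$—is what turns the ``immediate'' remark into a complete argument.
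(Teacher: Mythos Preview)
Your argument is correct and is exactly the unpacking the paper intends; the paper itself writes only ``The proof is immediate'' after the corollary and gives no further detail. The concern you flag about the cross scale $n\varrho(m)\epsilon^{-1}$ is legitimate but minor: either treat $\varrho(m)$ as a hidden constant in the $\cO(\cdot)$, or observe via AM--GM that $n\varrho(m)\epsilon^{-1} \leq \max\{n^{-1}\epsilon^{-2},\, n^3\varrho(m)^2\}$, which collapses to the stated bound once $\varrho(m) \lesssim 1$ as in the paper's ensuing remark on the choice of $m$.
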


\begin{remark}[Sample complexity]
    For a sufficiently small $\epsilon>0$, Corrollary \ref{cor: complexity} implies that the sample complexity of Algorithm \ref{algo: Prox-DASA} and \ref{algo: Prox-DASA-GT}  matches the optimal lower bound $\cO(n^{-1}\epsilon^{-2})$ in decentralized \emph{smooth} stochastic non-convex optimization \citep{lu2021optimal}.
\end{remark}

\begin{remark}[Transient time and communication complexity]
    Our algorithms can achieve convergence with a single communication round per iteration, i.e., $m=1$, leading to a topology-independent $\cO(n^{-1}\epsilon^{-2})$ communication complexity. In this case, however, the transient time $K_T$ still depends on $\rho$, as is also the case for smooth optimization problems \citep{xin2021improved}. When considering multiple consensus steps per iteration with the communication complexity being $\cO(mn^{-1}\epsilon^{-2})$, setting $m\asymp \lceil \tfrac{1}{1-\rho}\rceil$ (or $m\asymp \lceil \tfrac{1}{\sqrt{1-\rho}}\rceil$ for accelerated consensus algorithms) results in a topology-independent transient time given that $\varrho(m) \asymp 1$.
\end{remark}

\begin{remark}[Dual convergence]
    An important aspect to emphasize is that in our proposed methods, the sequence of average dual variables $\bar z^k = \frac{1}{n}\sum_{i=1}^{n} z_i^k$ converges to $\nabla F(\bar x^k)$, while the consensus error of $\{z_1^k, \dots, z_n^k\}$ decreases to zero. Our approach achieves this desirable property, which is commonly observed in modern variance reduction methods~\citep{gower2020variance}, without the need for complex variance reduction operations in each iteration. As a result, it provides a reliable termination criterion in the stochastic setting without requiring large batch sizes.
\end{remark}

\subsection{Proof Sketch}
\label{sec: proof_sketch}
Here, we present a sketch of our convergence analyses and defer details to Appendix. Our proof relies on the merit function below:
\begin{equation*}
\begin{split}
    W(\bar x^k,\bar z^k) = \underbrace{\Phi(\bar x^{k}) - \Phi_*}_{\text{function value gap}} + \underbrace{\Psi(\bar x^k) - \eta(\bar x^{k}, \bar z^{k})}_{\text{primal convergence}} + \lambda \underbrace{\norm{\nabla F(\bar x^k) - \bar z^k}^2}_{\text{dual convergence}},
\end{split}
\end{equation*}
where $\eta(x, z) = \underset{y\in \realset^d}{\min}\left\{\<z,y-x> + \frac{1}{2\gamma}\|y-x\|^2 + \Psi(y)\right\}.$
Let $y^k_+ \coloneqq \prox_{\Psi}^{\gamma}\left(\bar x^k - \gamma \bar z^k\right)$. Then, the proximal gradient mapping of $\bar z^k$ at $\bar x^k$ is $\cG(\bar x^k, \bar z^k, \gamma) = \frac{1}{\gamma}(\bar x^k - y^k_+). $
Since $y^{k}_{+}$ is the minimizer of a $1/\gamma$-strongly convex function, we have
\begin{equation*}
\begin{split}
    \<\bar z^k, y^k_+ - \bar x^k> + &\frac{1}{2\gamma}\|y^k_+ - \bar x^k\|^2 +\Psi(y^k_+) \leq \Psi(\bar x^k)  - \frac{1}{2\gamma}\|y^k_+ - \bar x^k\|^2,
\end{split}
\end{equation*}
implying the relation between $\Psi(\bar x^k) - \eta(\bar x^k, \bar z^k)$ and primal convergence: $$\Psi(\bar x^k) - \eta(\bar x^k, \bar z^k) \geq \frac{\gamma}{2} \norm{\cG(\bar x^k, \bar z^k, \gamma)}^2.$$

Following standard practices in optimization,  we set $\gamma = \frac{1}{L_{\nabla F}}$ below for simplicity. However, our algorithms do not require any restriction on the choice of $\gamma$.

\textbf{Step 1:} Leveraging the merit function with $\lambda \asymp \gamma$, we can first obtain an essential lemma (Lemma 11 in Appendix) in our analyses, which says that for sequences $\{x_{i}^k, z_i^k\}_{1\leq i\leq n, k\geq 0}$ generated by \texttt{Prox-DASA(-GT)} (Algorithm \ref{algo: Prox-DASA} or \ref{algo: Prox-DASA-GT}) with $\alpha_k\lesssim \min\{1, (1+\gamma)^{-2}, \gamma^2(1+\gamma)^{-2}\}$, we have
\begin{equation*}
\begin{split}
     W(\bar x^{k+1}, \bar z^{k+1}) - &W(\bar x^{k}, \bar z^{k}) \leq - \alpha_k \left\{\Theta^k  + \Upsilon^k + \alpha_k \Lambda^k + r^{k+1}\right\},
\end{split}
\end{equation*}
where $\E[r^{k+1}\mid\setF_k] = 0$, $\Lambda^k \asymp \gamma\norm{\bar \Delta^{k+1}}^2$, 
\begin{equation*}
\begin{split}
    &\Theta^k \asymp \frac{1}{\gamma} \|\bar x^k  - \bar y^k\|^2 + \gamma \norm{\nabla F(\bar x^k) - \bar z^k}^2, \\
    &\Upsilon^k \asymp \frac{\gamma}{n}\norm{\mZ_k - \bar \mZ_k}^2 + \frac{1}{n\gamma}\norm{\mX_k - \bar \mX_k}^2,
\end{split}
\end{equation*}
and $\bar\Delta^{k+1} = \bar v^{k+1} - \frac{1}{n}\sum_{i=1}^{n} \nabla F_i(x^k_i) = \bar u^{k+1} - \frac{1}{n}\sum_{i=1}^{n} \nabla F_i(x^k_i)$ (for \texttt{Prox-DASA-GT}). Thus, by telescoping and taking expectation with respect to $\setF_0$, we have
\begin{equation}\label{ineq: sketch-main-ineq}
\begin{split}
    &\sum_{k=0}^{K} \alpha_k\E\left[\norm{\bar x^k - \bar y^k}^2 + \gamma^2\norm{\nabla F(\bar x^k) - \bar z^k}^2\right]\\
    &\lesssim  \gamma W(\bar x^0, \bar z^0)+ \gamma^2\sigma^2 \boxed{\sum_{k=0}^{K}\frac{\alpha_k^2}{n}} + \sum_{k=0}^{K} \frac{\alpha_k\left\{\E\left[\norm{\mX_k - \bar \mX_k}^2 + \gamma^2\norm{\mZ_k - \bar \mZ_k}^2\right]\right\}}{n}.
\end{split}
\end{equation}

\textbf{Step 2:} We then analyze the consensus errors. Without loss of generality, we consider $\mX_0 = \bar \mX_0 = \mathbf{0}$, i.e., all nodes have the same initialization at $\mathbf{0}$. For $m\in\mathbb{N}_+$, define
$$\varrho(m) = \frac{(1+\rho^{2m})\rho^{2m}}{(1-\rho^{2m})^2}.$$ Then, we have the following fact:
\begin{itemize}
    \item $\varrho(m)$ is monotonically decreasing with the maximum value being $\varrho(1) = \frac{(1+\rho^{2})\rho^{2}}{(1-\rho^{2})^2}:= \varrho_1$;
    \item $\varrho(m)\leq 1$ if and only if $\rho^{2m}\leq \frac{1}{3}$.
\end{itemize}
With the definition of $\varrho(m)$ and assuming $0<\alpha_{k+1}\leq \alpha_k\leq 1$, we can show the consensus errors have the following upper bounds.

\texttt{Prox-DASA}: Let $\alpha_k \lesssim \varrho(m)^{-\frac{1}{2}}$, we have
\begin{align}
\sum_{k=0}^{K}\frac{\alpha_k}{n} \E\left[\norm{\mX_{k} - \bar \mX_k}^2 \right] \leq \sum_{k=0}^{K}\frac{\gamma^2\alpha_k}{n} \E\left[\norm{\mZ_{k} - \bar \mZ_k}^2\right] \lesssim (\gamma^2\sigma^2 + \nu^2)\varrho(m)\boxed{\sum_{k=0}^{K}\alpha_k^{3}}.\label{ineq: prox-dasa-consensus}
\end{align}

\texttt{Prox-DASA-GT}: Let $\alpha_k \lesssim \min\{\varrho(m)^{-1}, \varrho(m)^{-\frac{1}{2}}\}$, we have
\begin{align}
    \sum_{k=0}^{K}\frac{\alpha_k}{n}\E\left[\|\mX_k - \bar \mX_k\|^2\right]\leq \sum_{k=0}^{K}\frac{\gamma^2\alpha_k}{n}\E\left[\|\mZ_k - \bar \mZ_k\|^2\right] \lesssim  \varrho(m)^2 \boxed{\sum_{k=0}^{K}\alpha_k^{3}}\left\{\gamma^2\sigma^2 + \alpha_k^2\E\left[\|\bar x^k - \bar y^k\|^2\right] \right\}.\label{ineq: sketch-prox-dasa-gt-consensus}
\end{align}
We can also see that to obtain a topology-independent iteration complexity, the number of communication rounds can be set as $m = \lceil \frac{\log 3}{2(1-\rho)} \rceil$, which implies $\varrho(m) \leq 1$.

In addition, we have the following fact that relates the consensus error of $\mY$ to the consensus errors of $\mX$ and $\mZ$:
\begin{equation*}
\begin{split}
        \norm{y^k_+ - \bar y^k}^2 + \frac{1}{n} \norm{\mY_k - \bar \mY_k}^2 = \frac{1}{n} \sum_{i=1}^{n} \norm{y_i^k - y^k_+}^2  \leq \frac{2}{n} \left\{\|\mX_{k} - \bar \mX_k\|^2 + \gamma^2\|\mZ_{k} - \bar \mZ_k\|^2 \right\}.
\end{split}
\end{equation*}

\textbf{Step 3:} Let $R$ be a random integer with 
\[
\Pr(R = k)=\frac{\alpha_k}{\sum_{k=1}^{K}\alpha_k}, \quad k=1,2,\dots, K,
\]
and dividing both sides of \eqref{ineq: prox-dasa-consensus} by $\sum_{k=1}^{K}\alpha_k$, we can obtain that for \texttt{Prox-DASA}, the consensus error of $\mX_R$ satisfies
\begin{equation*}
    \E\left[\frac{1}{n}\norm{\mX_R - \bar \mX_R}^2\right] \lesssim (\gamma^2\sigma^2 +\nu^2)\varrho(m)\frac{\sum_{k=0}^{K} \alpha_k^3}{\sum_{k=1}^{K} \alpha_k}.
\end{equation*}
Moreover, noting that
\begin{equation*}
\begin{split}
    \norm{\cG(\bar x, \nabla F(\bar x), \gamma)}^2 \lesssim \frac{1}{\gamma^2} \left\{\norm{\bar x^k - \bar y^k}^2 
    + \norm{y^k_+ - \bar y^k}^2\right\}  +\norm{\nabla F(\bar x^k) - \bar z^k}^2,
\end{split}
\end{equation*}
and combining \eqref{ineq: sketch-main-ineq} with \eqref{ineq: prox-dasa-consensus}, we can get
\begin{equation*}
    \begin{split}
    &\E\left[\norm{\cG(\bar x^R, \nabla F(\bar x^R), \gamma)}^2\right] \lesssim \underbrace{\boxed{\frac{W(\bar x^0, \bar z^0)}{\gamma\sum_{k=1}^{K}\alpha_k}}}_{\text{initialization-related term}} + \underbrace{\boxed{\sigma^2 \frac{\sum_{k=0}^{K}\alpha_k^2}{n\sum_{k=1}^{K}\alpha_k}}}_{\text{variance-related term}} + \underbrace{\boxed{(\sigma^2 +\gamma^{-2}\nu^2)\varrho(m)\frac{\sum_{k=0}^{K} \alpha_k^3}{\sum_{k=1}^{K} \alpha_k}}}_{\text{consensus error}}.
    \end{split}
\end{equation*}
Thus, setting $\alpha_k \asymp \sqrt{\frac{n}{K}} $, we obtain the convergence results of \texttt{Prox-DASA}:

\begin{equation*}
    \begin{split}
    &\E\left[\norm{\cG(\bar x^R, \nabla F(\bar x^R), \gamma)}^2\right] \lesssim \frac{\gamma^{-1} W(\bar x^0 , \bar z^0) + \sigma^2}{\sqrt{nK}} + \frac{n(\sigma^2 +\gamma^{-2}\nu^2)\varrho(m)}{K},\\
    &\E\left[\frac{1}{\gamma^2 n}\norm{\mX_R - \bar \mX_R}^2\right] \lesssim \frac{n(\sigma^2 +\gamma^{-2}\nu^2)\varrho(m)}{K}.
    \end{split}
\end{equation*}

For \texttt{Prox-DASA-GT}, we can complete the proof with similar arguments by combining \eqref{ineq: sketch-prox-dasa-gt-consensus} with \eqref{ineq: sketch-main-ineq} and noting that $\varrho(m)^2 \alpha_k^4 \lesssim 1$.

\section{Experiments}

\subsection{Synthetic Data}

To demonstrate the effectiveness of our algorithms, we first evaluate our algorithms using synthetic data for solving sparse single index models \citep{alquier2013sparse} in the decentralized setting. We consider the homogeneous setting where the data sample at each node $\xi=(X, Y)$ is generated from the same single index model $Y = g(X^\top \theta_*) + \varepsilon$, where $X, \theta \in \realset^d$ and $\E[ \varepsilon| X]=0$. In this case, we solve the following $L_1$-regularized least square problems:
\begin{equation*}
    \underset{\theta\in \realset^d}{\min}~ \frac{1}{n}\sum_{i=1}^{n} \underset{(X, Y)\sim \cD}{\E}\left[(Y - g(X^\top\theta))^2\right] + \lambda \norm{\theta}_1
\end{equation*}
In particular, we set $\theta_*\in\realset^{100}$ to be a sparse vector and $g(\cdot) = (\cdot)^2$ which corresponds to the sparse phase retrieval problem \citep{jaganathan2016phase}. We simulate streaming data samples with batch size $=1$ for training and 10,000 data samples per node for evaluations, where $X$ and $\epsilon$ are sampled independently from two Gaussian distributions. We employ a ring topology for the network where self-weighting and neighbor weights are set to be $1/3$. We set the penalty parameter $\lambda = 0.01$, the total number of iterations $K=10,000$, $\alpha_k = \sqrt{n/K}$, $\gamma=0.01$, and the number of communication rounds per iteration $m=\lceil \frac{1}{1-\rho} \rceil$. We plot the test loss and the norm of proximal gradient mapping in the log scale against the number of iterations in Figure \ref{fig:linear-speedup}, which shows that our decentralized algorithms have an additional linear speed-up with respect to $n$. In other words, the algorithms become faster as more agents are added to the network.
\begin{figure}[!t]
    \centering
    \includegraphics[width=.7\textwidth]{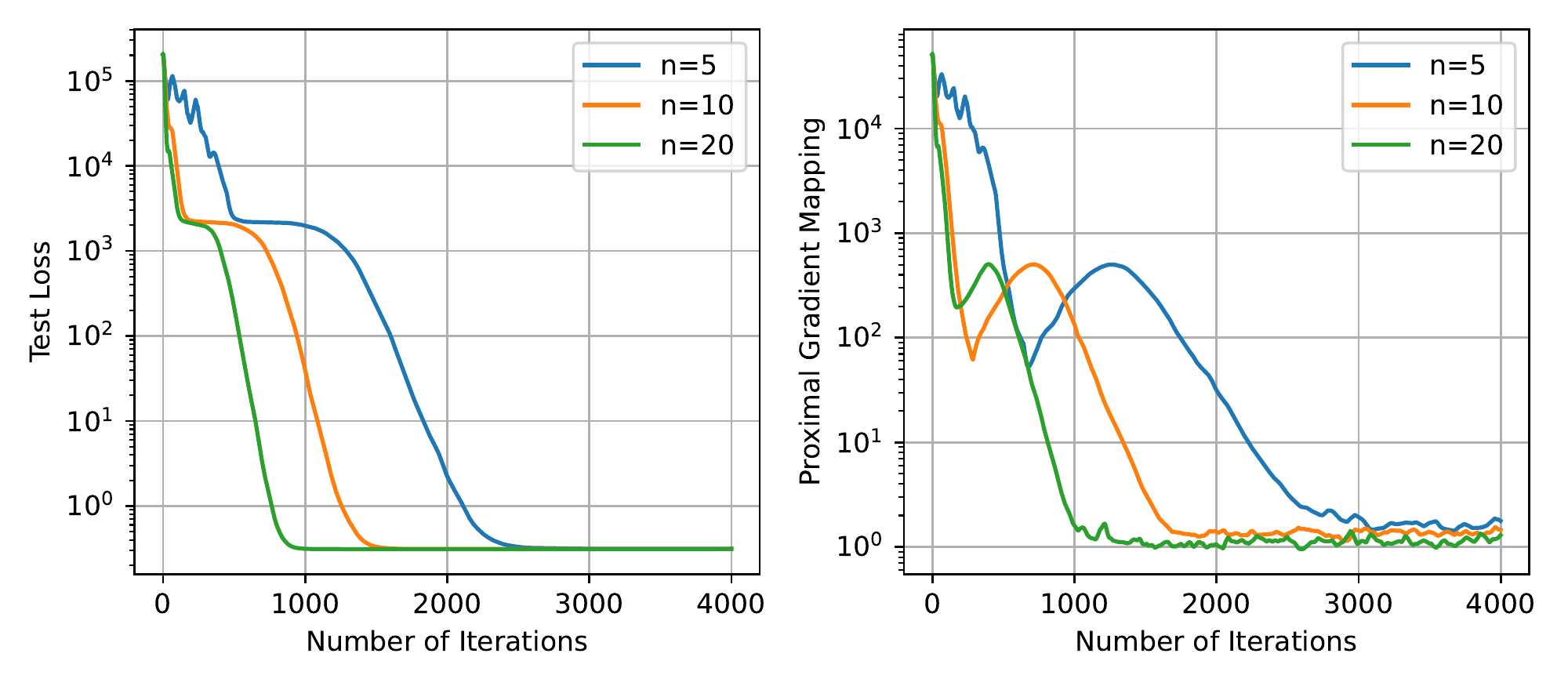}
    \caption{Linear-speedup performance of \texttt{Prox-DASA} for decentralized online sparse phase retrievel problems. (\texttt{Prox-DASA-GT} has relatively the same plots)}
    \label{fig:linear-speedup}
\end{figure}

\subsection{Real-World Data}\label{sec: real_data_exp}
\begin{figure*}
    \centering
    \subfigure[]{\label{a9a_acc_time}\includegraphics[width=0.32\textwidth]{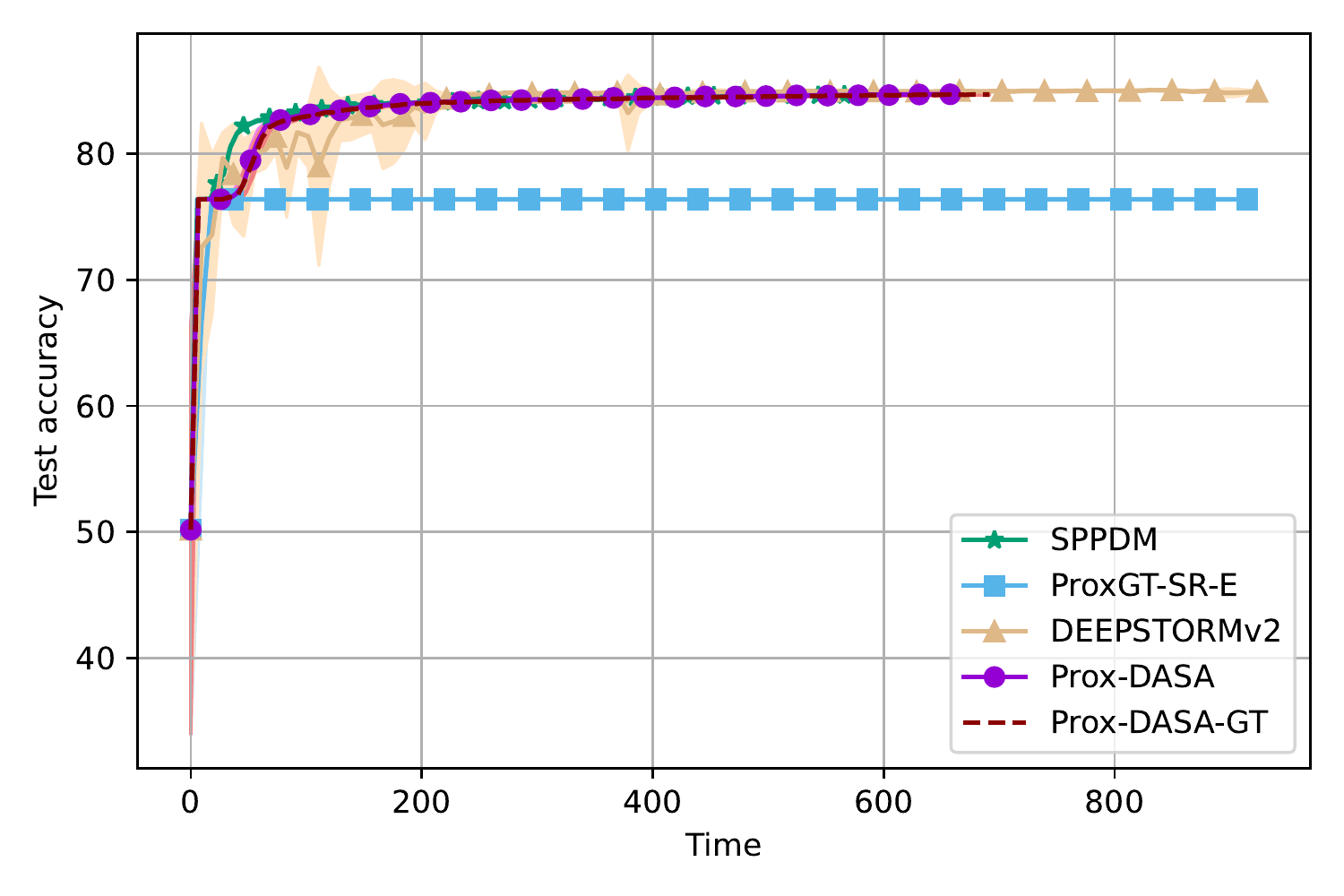}}
    \subfigure[]{\label{a9a_loss_time}\includegraphics[width=0.32\textwidth]{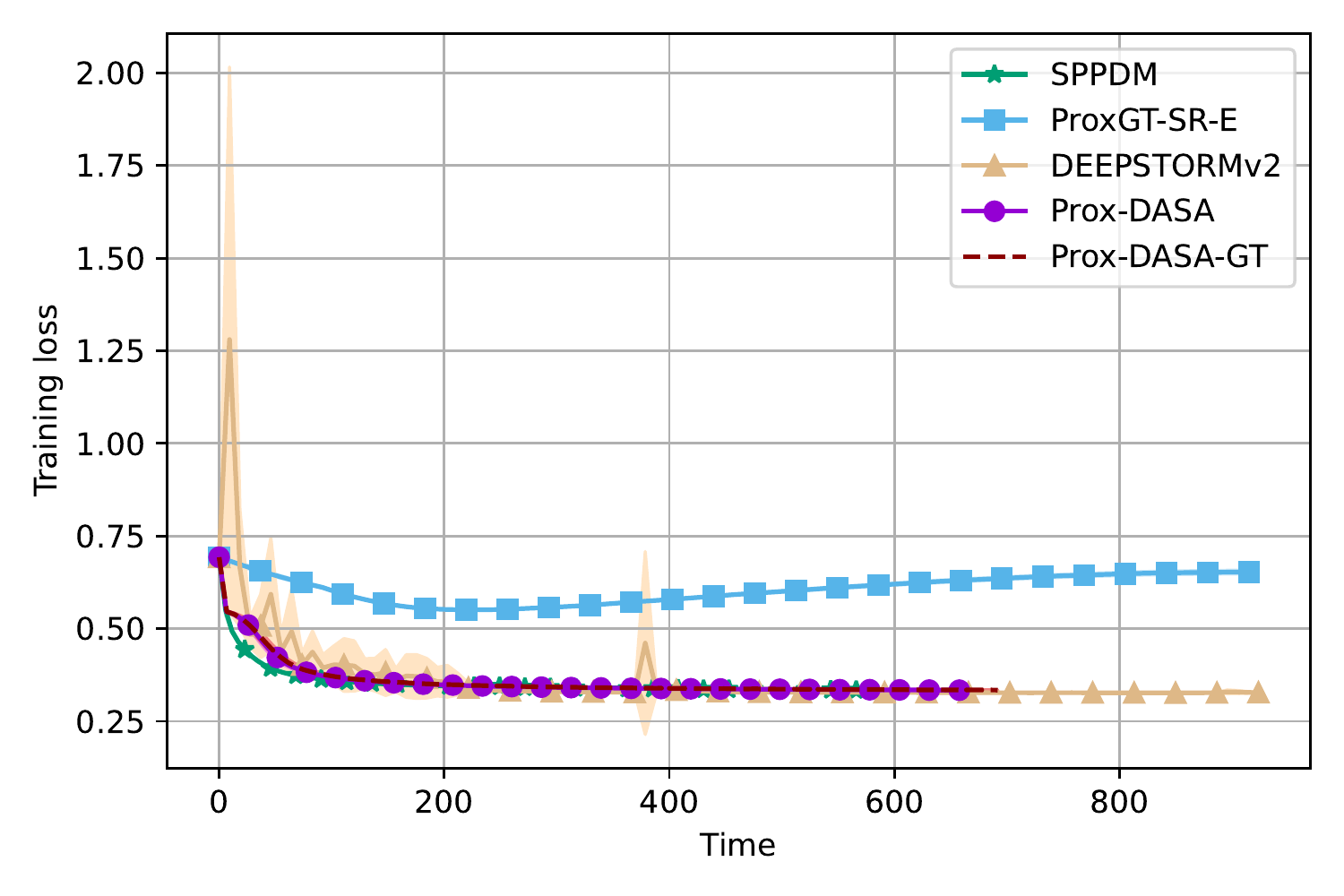}}
    \subfigure[]{\label{a9a_stat_time}\includegraphics[width=0.32\textwidth]{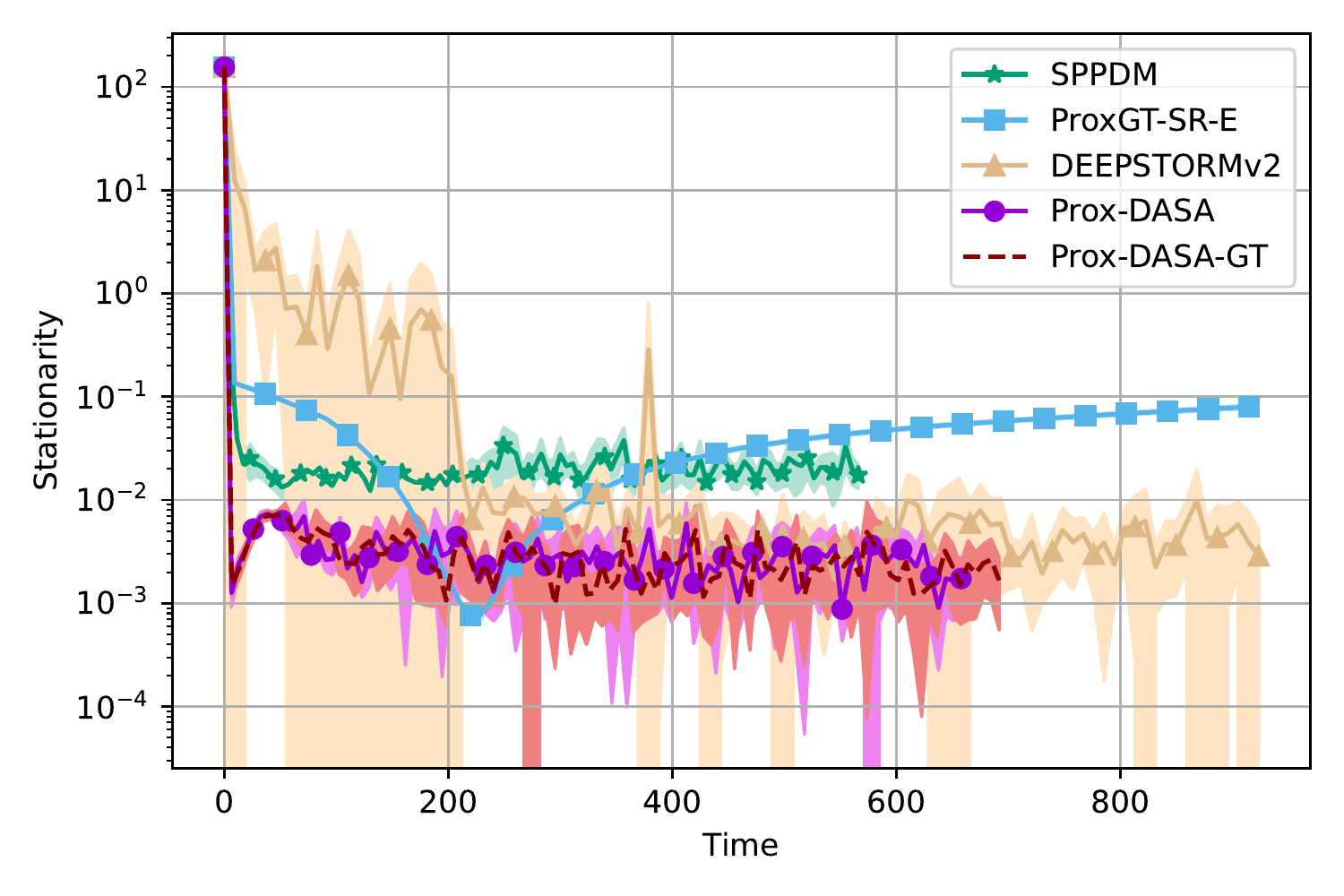}}
    
    \subfigure[]{\label{a9a_acc_epo}\includegraphics[width=0.32\textwidth]{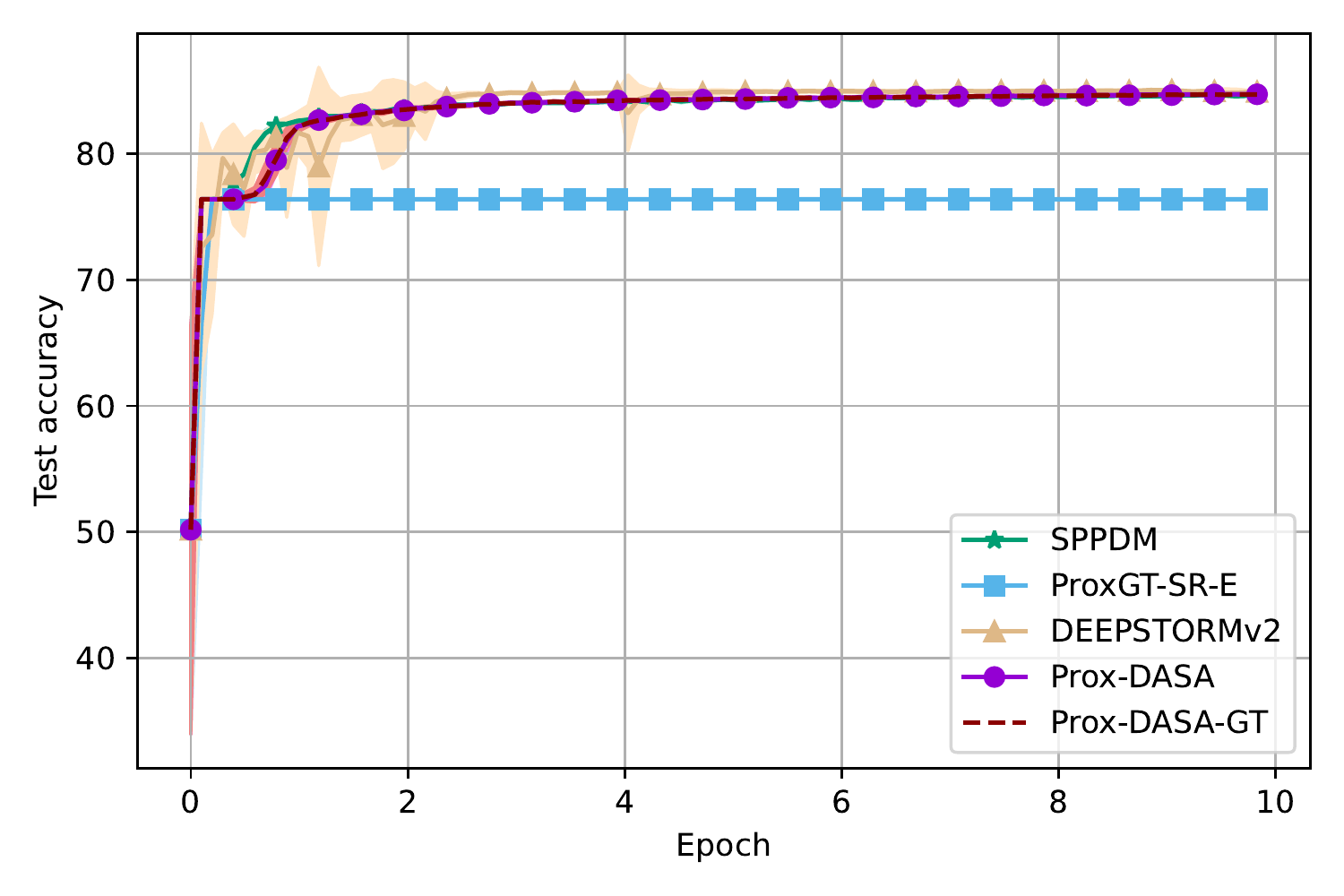}}
    \subfigure[]{\label{a9a_loss_epo}\includegraphics[width=0.32\textwidth]{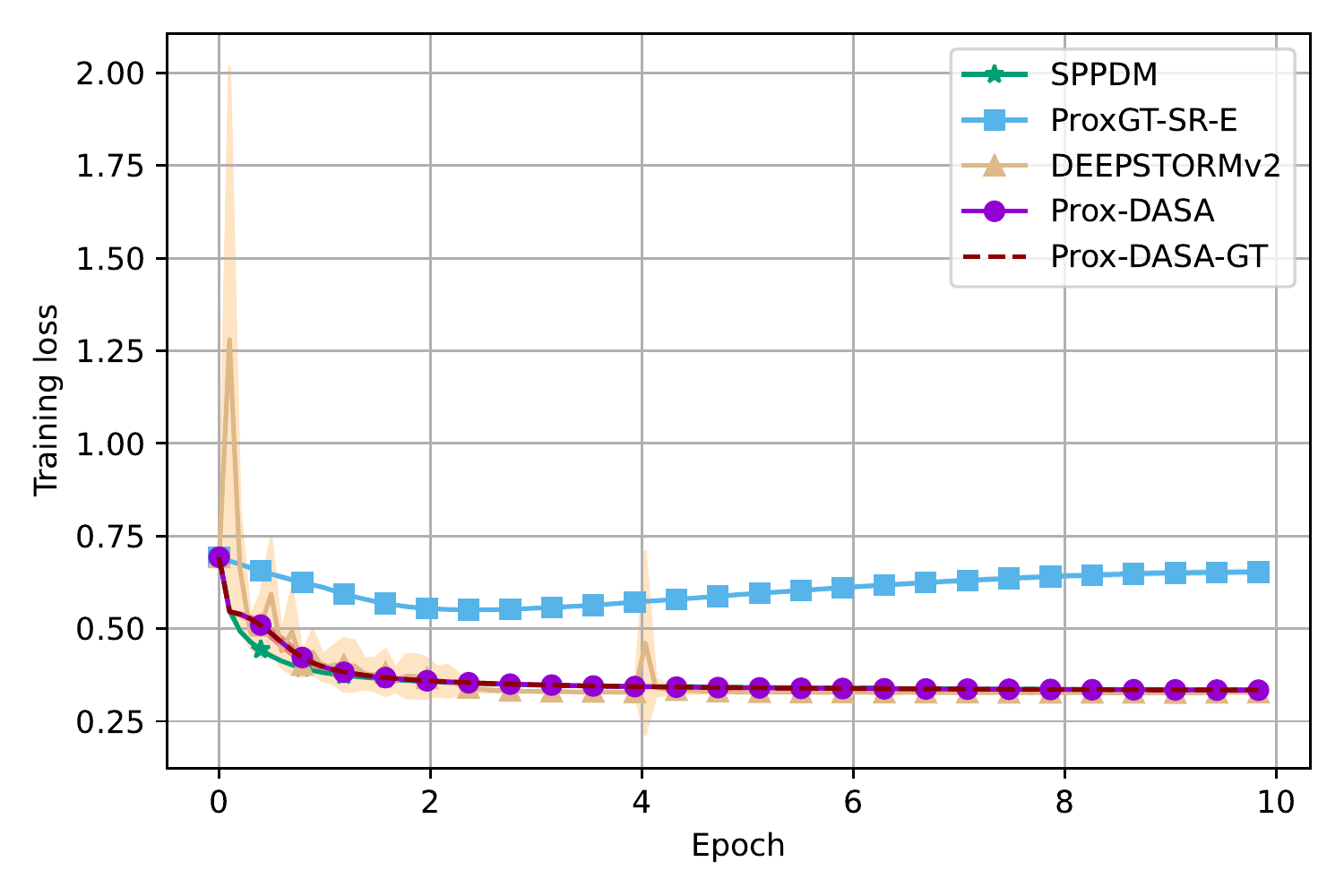}}
    \subfigure[]{\label{a9a_stat_epo}\includegraphics[width=0.32\textwidth]{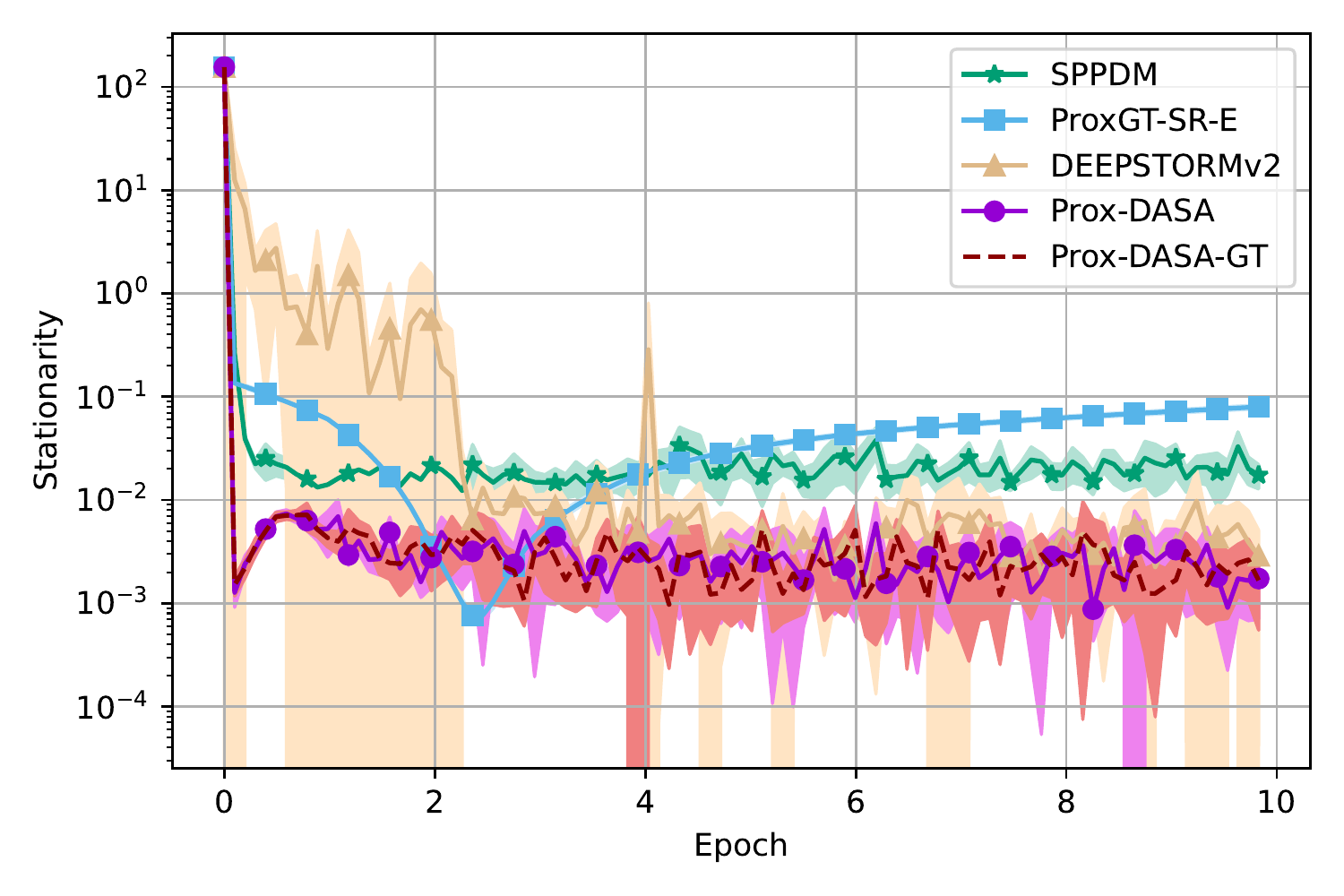}}

    \subfigure[]{\label{mnist_acc_time}\includegraphics[width=0.32\textwidth]{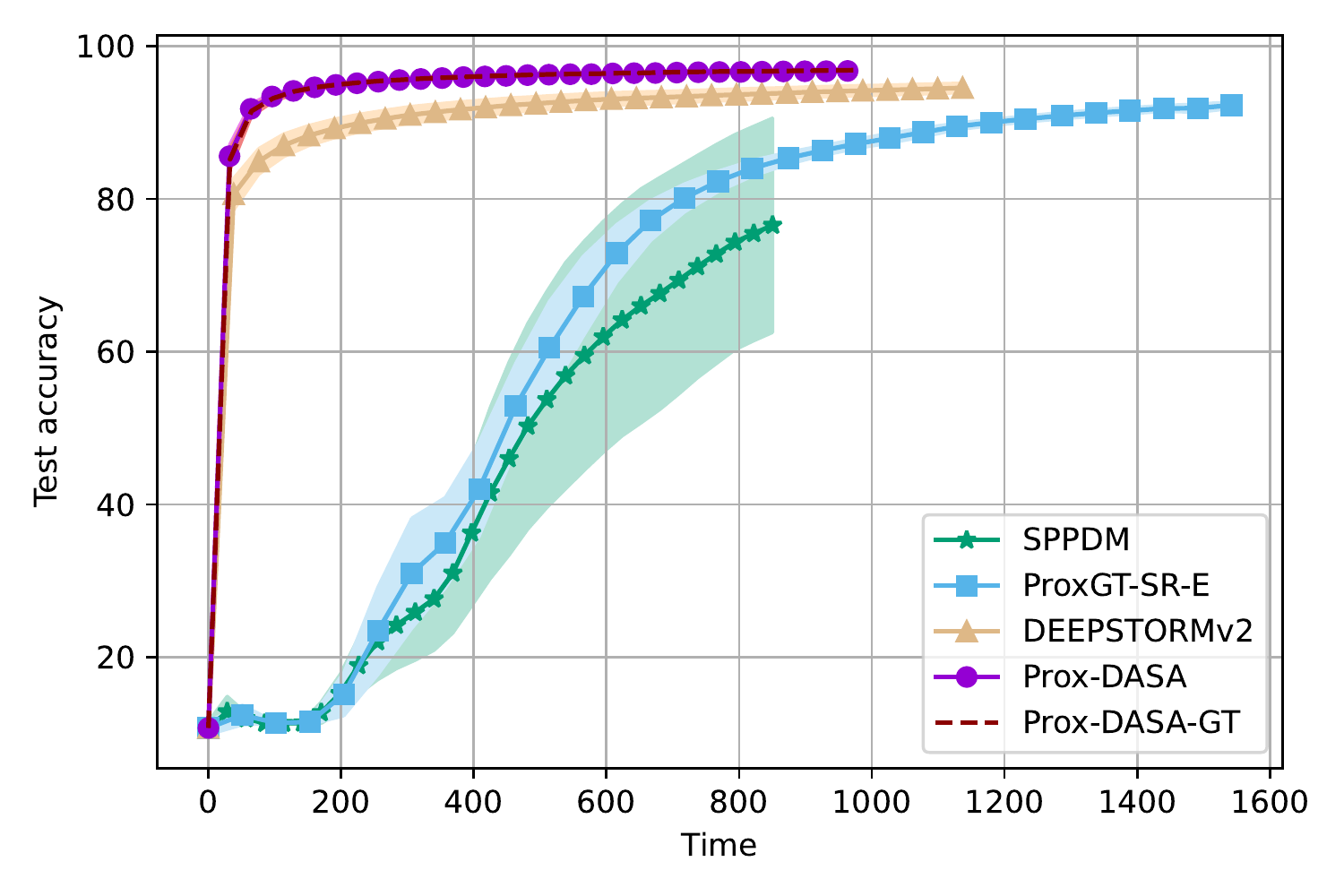}}
    \subfigure[]{\label{mnist_loss_time}\includegraphics[width=0.32\textwidth]{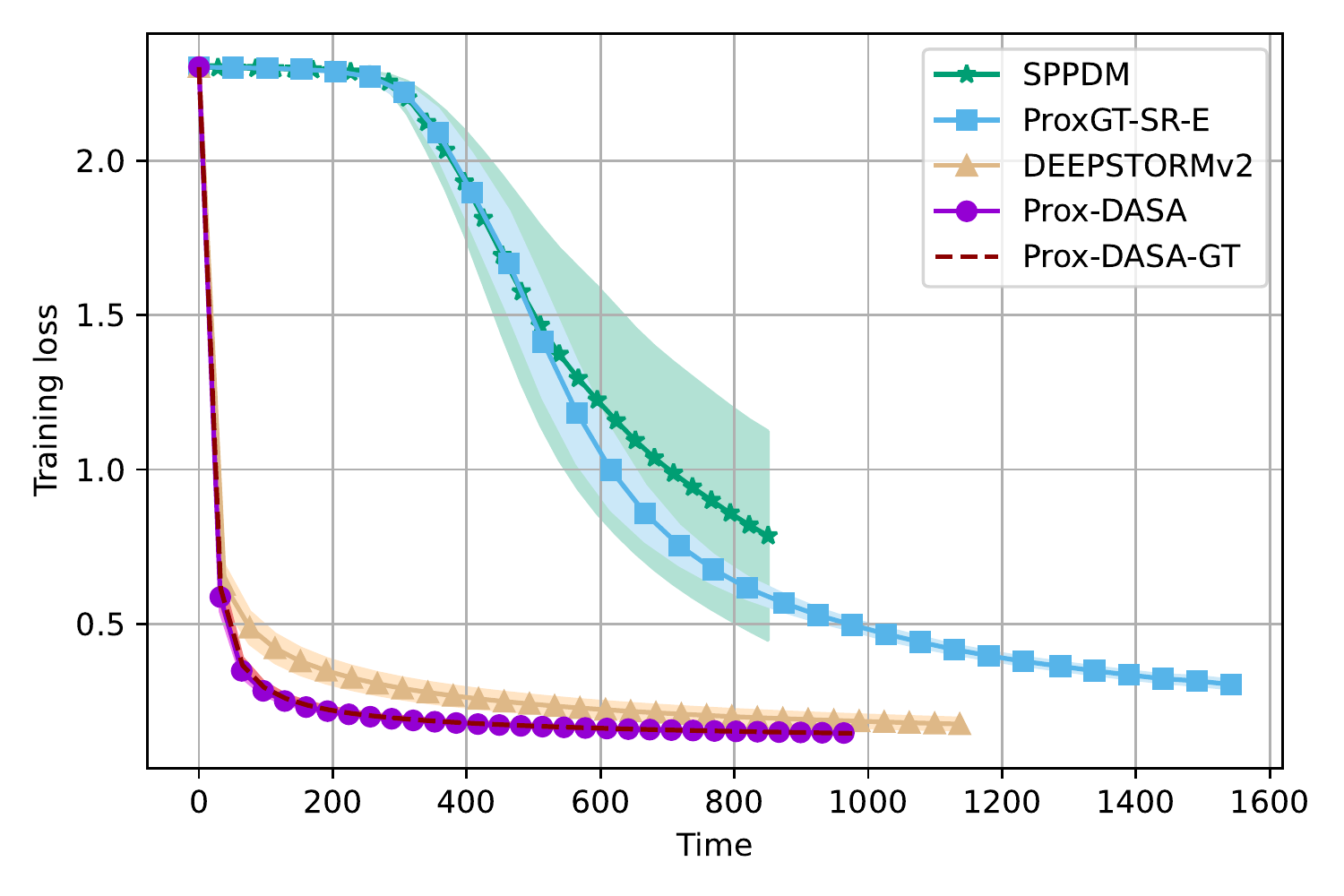}}
    \subfigure[]{\label{mnist_stat_time}\includegraphics[width=0.32\textwidth]{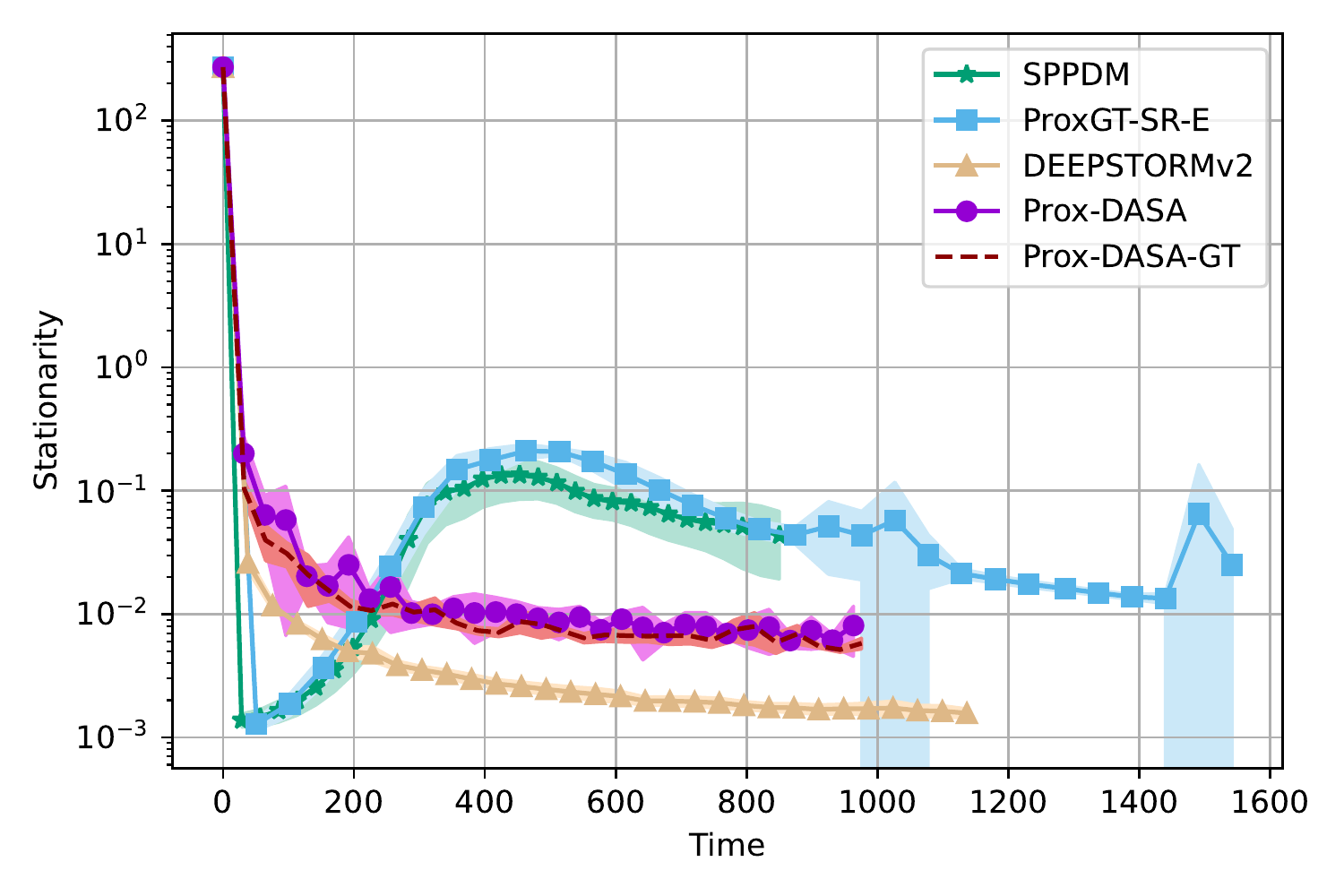}}
    
    \subfigure[]{\label{mnist_acc_epo}\includegraphics[width=0.32\textwidth]{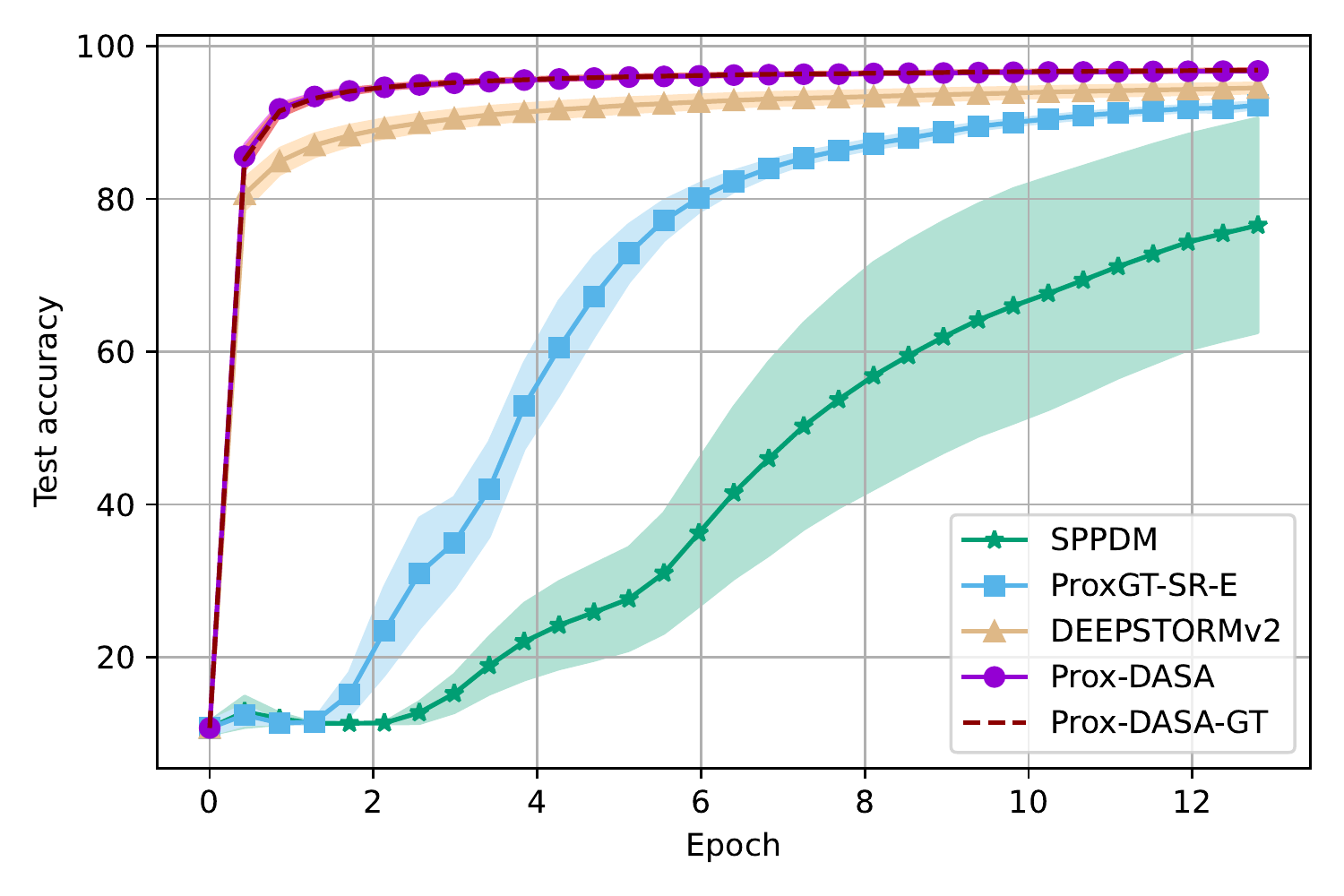}}
    \subfigure[]{\label{mnist_loss_epo}\includegraphics[width=0.32\textwidth]{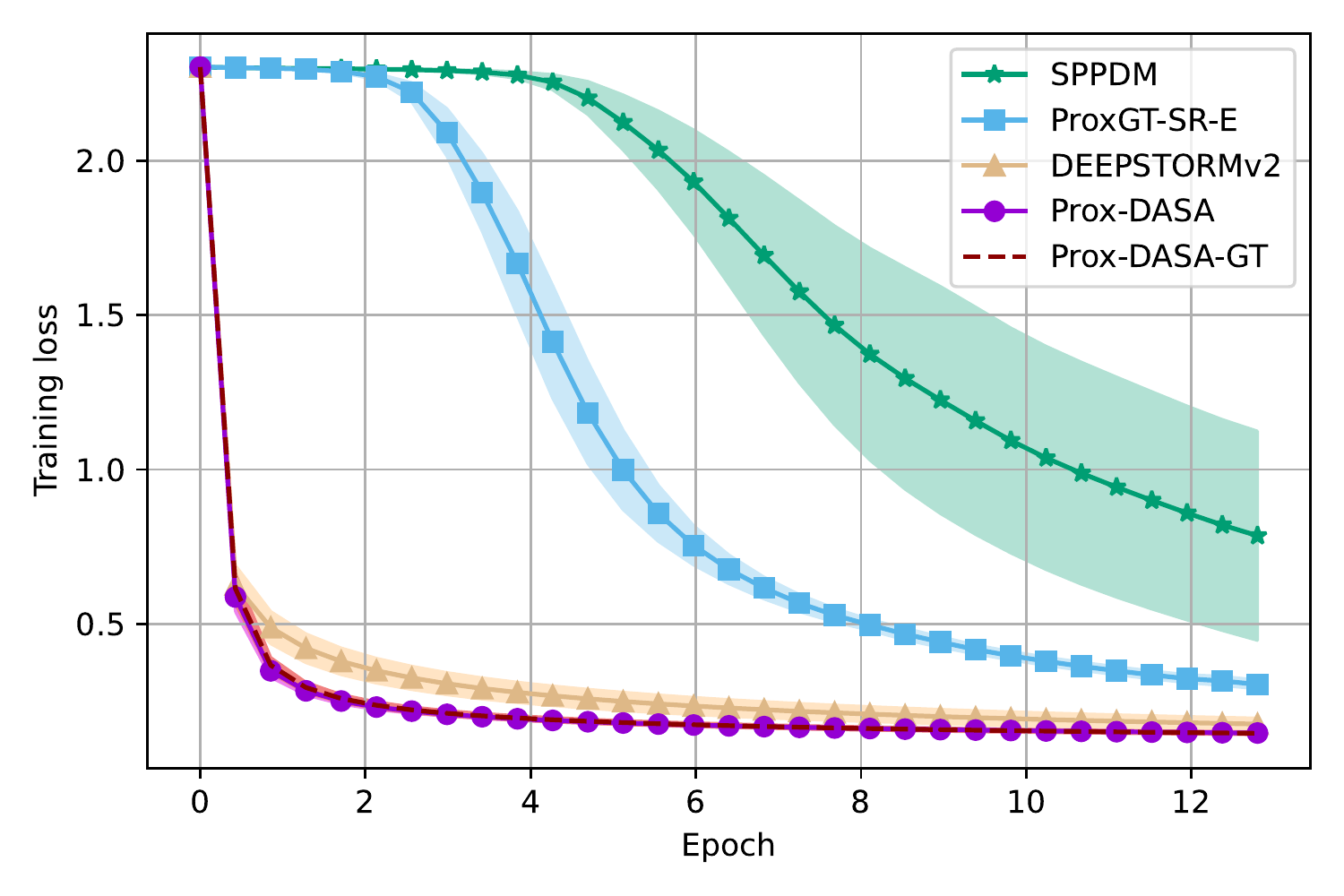}}
    \subfigure[]{\label{mnist_stat_epo}\includegraphics[width=0.32\textwidth]{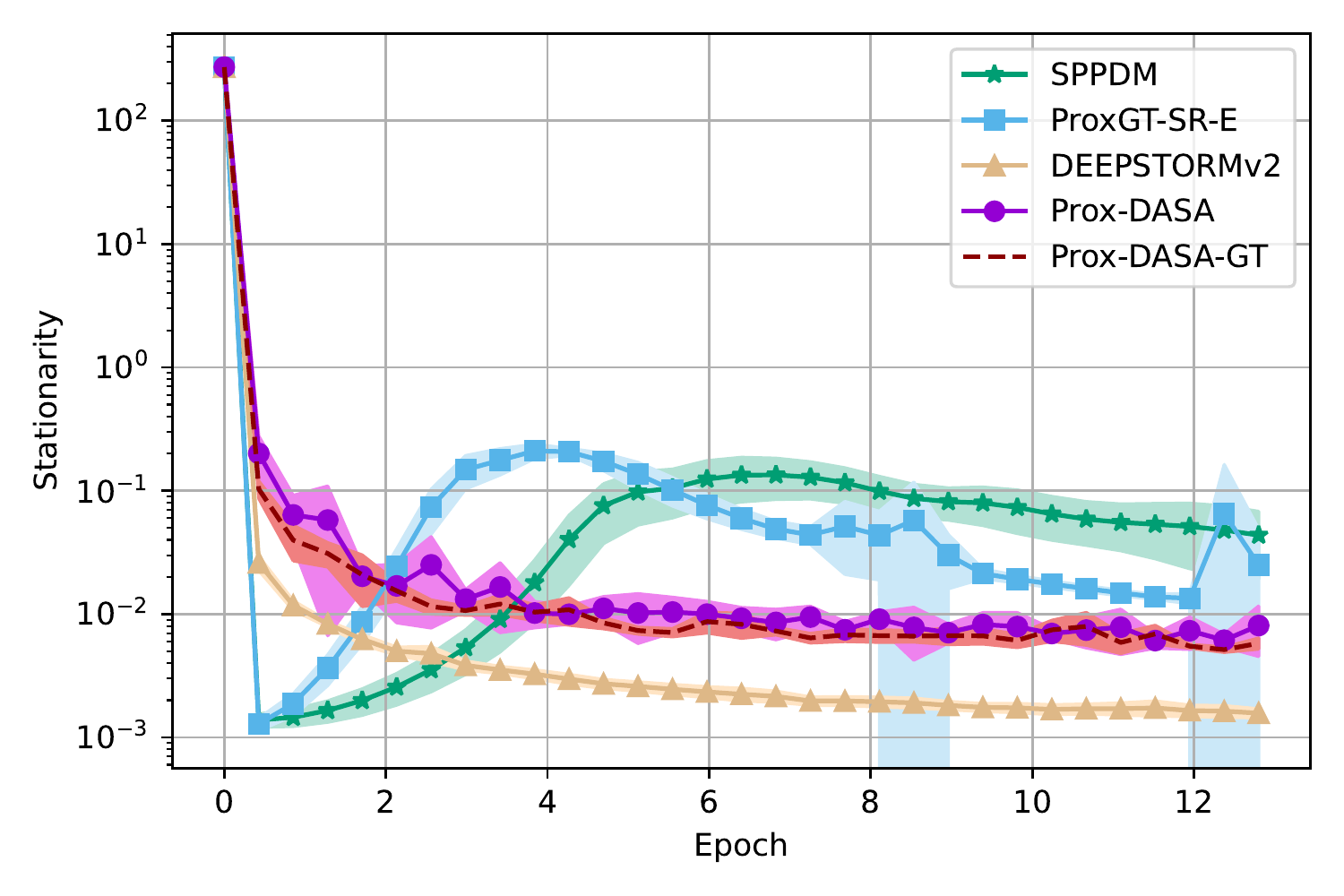}}
    \caption{Comparisons between \texttt{SPPDM} \citep{wang2021distributed}, \texttt{ProxGT-SR-E} \citep{xin2021stochastic}, \texttt{DEEPSTORM} \citep{mancino2022proximal}, \texttt{Prox-DASA} \ref{algo: Prox-DASA}, and \texttt{Prox-DASA-GT} \ref{algo: Prox-DASA-GT}. The first two rows correspond to a9a and the last two rows correspond to MNIST. The results are averaged over 10 trials, and the shaded regions represent confidence intervals. The vertical axes in the third column are log-scale. It should be noted that \texttt{ProxGT-SR-E} maintains another hyperparameter $q$ (see, e.g., Algorithm 4 and Theorem 3 in \citep{xin2021stochastic}) and computes gradients using a full batch every $q$ iterations. For simplicity, we do not include that amount of epochs when we plot this figure. In other words, the real number of epochs required to obtain a point on \texttt{ProxGT-SR} is larger than plotted in the figures in the second and fourth rows. We include the plots that take $q$ into account in Appendix.}\label{fig: comparison} %Figure \ref{fig: full_appendix}
\end{figure*}
Following \cite{mancino2022proximal}, we consider solving the classification problem 
\begin{equation}\label{eq:exp_opt}
    \underset{\theta\in\realset^d}{\min}~ \frac{1}{n} \sum_{i=1}^{n} \frac{1}{|\mathcal{D}_i|}\sum_{(x, y)\in \mathcal{D}_i}\ell_i(f(x;\theta),y) + \lambda \|\theta\|_1,
\end{equation}
on a9a and MNIST datasets\footnote{Available at https://www.openml.org.}. Here, $\ell_i$ denotes the cross-entropy loss, and $f$ represents a neural network parameterized by $\theta$ with $x$ being its input. $\mathcal{D}_i$ is the training set only available to agent $i$. The $L_1$ regularization term is used to impose a sparsity structure on the neural network. We use the code in \cite{mancino2022proximal} for \texttt{SPPDM}, \texttt{ProxGT-SR-O/E}, \texttt{DEEPSTORM}, and then implement \texttt{Prox-DASA} and \texttt{Prox-DASA-GT} under their framework, which mainly utilizes PyTorch \citep{NEURIPS2019_9015} and mpi4py \citep{dalcin2021mpi4py}.  We use a 2-layer perception model on a9a and the LeNet architecture \citep{lecun2015lenet} for the MNIST dataset. We have 8 agents ($n=8$) which connect in the form of a ring for a9a and a random graph for MNIST. To demonstrate the performance of our algorithms in the constant batch size setting, the batch size is chosen to be 4 for a9a and 32 for MNIST for all algorithms. The learning rates provided in the code of \cite{mancino2022proximal} are adjusted accordingly, and we select the ones with the best performance. For \texttt{Prox-DASA} and \texttt{Prox-DASA-GT} we choose a diminishing stepsize sequence, namely, $\alpha_k =  \min\left\{\alpha\sqrt{\frac{n}{k}}, 1\right\}$ for all $k\geq 0$. Note that the same complexity (up to logarithmic factors) bounds can be obtained by directly plugging in the aforementioned expressions for $\alpha_k$ in Section \ref{sec: proof_sketch}. Then we tune $\gamma\in \left\{1, 3, 10\right\}$ and $\alpha\in \left\{0.3, 1.0, 3.0\right\}$. The penalty parameter $\lambda$ is chosen to be $0.0001$ for all experiments. The number of communication rounds per iteration $m$ is set to be $1$ for all algorithms. We evaluate the model performance periodically during training and then plot the results in Figure \ref{fig: comparison}, from which we observe that both \texttt{Prox-DASA} and \texttt{Prox-DASA-GT} have considerably good performance with small variance in terms of test accuracy, training loss, and stationarity. In particular, it should be noted that although \texttt{DEEPSTORM} achieves better stationarity in Figure \ref{mnist_stat_epo} and \ref{mnist_stat_time}, training a neural network by using \texttt{DEEPSTORM} takes longer time than \texttt{Prox-DASA} and \texttt{Prox-DASA-GT} since it uses the momentum-based variance reduction technique, which
requires {\bf two forward-backward passes} (see, e.g., Eq. (10) and Algorithm 1 in \cite{mancino2022proximal}) to compute the gradients in one iteration per agent. In contrast, ours only require {\bf one}, which saves a large amount of time (see Table 1 in Appendix). We include further details of our experiments in the Appendix. %\ref{sec: exp_details}.

\section{Conclusion}

In this work, we propose and analyze a class of single time-scale decentralized proximal algorithms (\texttt{Prox-DASA-(GT)}) for non-convex stochastic composite optimization in the form of \eqref{eq:problem}. We show that our algorithms achieve linear speed-up with respect to the number of agents using an $\cO(1)$ batch size per iteration under mild assumptions. Furthermore, we demonstrate the efficiency and effectiveness of our algorithms through extensive experiments, in which our algorithms achieve relatively better results with less training time using a small batch size compared to existing methods. In future research, it would be intriguing to expand our work in the context of dependent and heavy-tailed stochastic gradient scenarios \citep{wai2020convergence, li2022high}.

\section*{Acknowledgements}
    We thank the authors of~\citep{mancino2022proximal} for kindly providing the code framework to support our experiments.
    The research of KB is supported by NSF grant DMS-2053918. The research of SG is partially supported by NSERC grant RGPIN-2021-02644.

\bibliographystyle{plainnat}
\bibliography{bibfile}

% \begin{center}
% \hrule height 4pt
% \vskip 0.25in
%     {\Large\bf Appendix}
% \vskip 0.29in
% \hrule
% \end{center}

\appendix
\section{Experimental Details}
\label{sec: exp_details}

All experiments are conducted on a laptop with Intel Core i7-11370H Processor and Windows 11 operating system. The total iteration numbers for a9a and MNIST are 10000 and 3000 respectively. The graph that represents the network topology is set to be ring (or cycle in graph theory) for a9a and random graph (given by \cite{mancino2022proximal}) for MNIST (See Figure \ref{fig: graphs}). To demonstrate the performance of our algorithms in a constant batch size setting, the batch sizes are chosen to be $4$ for a9a and $32$ for MNIST in all algorithms. We adjust the learning rates provided in the code of \cite{mancino2022proximal} accordingly and select the ones that have the best performance. For \texttt{Prox-DASA} and \texttt{Prox-DASA-GT} we choose a diminishing stepsize sequence, namely, $\alpha_k =  \min\left\{\alpha\sqrt{\frac{n}{k}}, 1\right\}$ for all $k\geq 0$. Note that the same complexity (up to logarithmic factors) bounds can be obtained by directly plugging in the aforementioned expressions for $\alpha_k$ in Section 4.3. Then we tune $\gamma\in \left\{1, 3, 10\right\}$ and $\alpha\in \left\{0.3, 1.0, 3.0\right\}$. The penalty parameter $\lambda$ is chosen to be $0.0001$ for all experiments.

We summarize the outputs of all experiments in Table \ref{table: output}, from which we can tell \texttt{Prox-DASA} and \texttt{Prox-DASA-GT} achieve good performance in a relatively short amount of time. The stationarity is defined as $\|\cG(\bar x^k, \nabla F(\bar x^k), 1)\|^2 + \|\mX_k - \bar \mX_k\|^2$, which is the same as that in \cite{mancino2022proximal}. As mentioned in the caption of Figure 2 in the main paper, there is an extra hyperparameter $q$ in \texttt{ProxGT-SR-E}, and we found that large $q$ already works well for a9a experiment, but $q$ has to be small in the MNIST experiment otherwise the final accuracy will be much smaller than the one presented in Table \ref{table: output}. Hence in \texttt{ProxGT-SR-E} we choose $q=1000$ for a9a and $q=32$ for MNIST, and the plots that take this amount of epochs into account are in Figure \ref{fig: full_appendix}.\\

\begin{table*}[h]
\centering
\renewcommand{\arraystretch}{1.1}
\caption{Comparisons between all algorithms}\label{table: output}
\resizebox{\textwidth}{!}{%
\begin{tabular}{|c|c|c|c|c|c|c|}
\hline
\textbf{Algorithm} & \textbf{Accuracy} & \textbf{Training Loss} & \textbf{Stationarity} & \makecell{\bf Communication \\ \bf time per iteration (s)} & \makecell{\bf Computation \\ \bf time per iteration (s)} & \makecell{\bf Total time \\ \bf per iteration (s)} \\ \hline
\multicolumn{7}{|c|}{\bf a9a} \\ \hline
\texttt{SPPDM}              & 84.64\%             & 0.3340            & 0.0174                & 0.0260                      & 0.0305                    & 0.0565              \\ \hline
\texttt{ProxGT-SR-E}        & 76.38\%             & 0.6528             & 0.0797                & 0.0521                      & 0.0394                    & 0.0915              \\ \hline
\texttt{DEEPSTORM v2}          & \bf 84.90\%             & \bf 0.3274             & 0.0029                & 0.0525                      & 0.0398                    & 0.0923              \\ \hline
\texttt{Prox-DASA}          & 84.71\%             & 0.3338             & \bf 0.0017                & 0.0360                      & 0.0298                    & 0.0658              \\ \hline
\texttt{Prox-DASA-GT}       & 84.69\%             & 0.3342             &  \bf 0.0017                & 0.0390                      & 0.0301                    & 0.0691              \\ \hline
\multicolumn{7}{|c|}{\bf MNIST} \\ \hline
\texttt{SPPDM}              & 76.54\%           & 0.7854                 & 0.0436                & 0.1587                      & 0.1246                    & 0.2833              \\ \hline
\texttt{ProxGT-SR-E}        & 92.26\%           & 0.3042                 & 0.0250                & 0.1771                      & 0.3368                    & 0.5139              \\ \hline
\texttt{DEEPSTORM v2}          & 94.52\%           & 0.1759                 &  \bf 0.0016                & 0.1758                      & 0.2030                    & 0.3788              \\ \hline
\texttt{Prox-DASA}          & 96.74\%           & 0.1469                 & 0.0081                & 0.1912                      & 0.1299                    & 0.3211              \\ \hline
\texttt{Prox-DASA-GT}       & \bf 96.84\%           & \bf 0.1460                 & 0.0058                & 0.1935                      & 0.1317                    & 0.3252              \\ \hline
\end{tabular}
}
\end{table*}

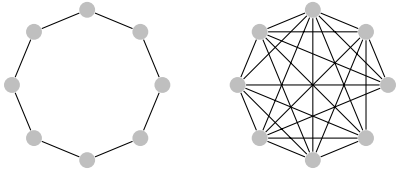
\begin{figure*}[h]
    \centering
    \begin{tikzpicture}[shorten >=1pt,->]
      \tikzstyle{vertex}=[circle,fill=black!25,minimum size=6pt,inner sep=2pt]
      \node[vertex] (G_1) at (0, 1) {};
      \node[vertex] (G_2) at (0.707, 0.707)   {};
      \node[vertex] (G_3) at (1, 0)  {};
      \node[vertex] (G_4) at (0.707, -0.707)  {};
      \node[vertex] (G_5) at (0, -1)  {};
      \node[vertex] (G_6) at (-0.707, -0.707)  {};
      \node[vertex] (G_7) at (-1, 0)  {};
      \node[vertex] (G_8) at (-0.707, 0.707)  {};
      \draw (G_1) -- (G_2) -- (G_3) -- (G_4) -- (G_5) -- (G_6) -- (G_7) -- (G_8) -- (G_1) -- cycle;
      \node[vertex] (v1) at (3, 1) {};
      \node[vertex] (v2) at (3.707, 0.707)   {};
      \node[vertex] (v3) at (4, 0)  {};
      \node[vertex] (v4) at (3.707, -0.707)  {};
      \node[vertex] (v5) at (3, -1)  {};
      \node[vertex] (v6) at (3-0.707, -0.707)  {};
      \node[vertex] (v7) at (3-1, 0)  {};
      \node[vertex] (v8) at (3-0.707, 0.707)  {};
      \draw (v1) -- (v2) -- (v3) -- (v6) -- (v4) -- (v5) -- (v7) -- (v8) -- (v1) --cycle;
      \draw (v1) -- (v3) -- (v7) -- (v4) -- (v8) -- (v2) -- (v5) -- (v6) -- (v1) -- cycle;
      \draw (v1) -- (v4) -- (v2) -- (v6) -- (v7) -- (v1) --cycle;
      \draw (v1) -- (v5) -- (v8) -- cycle;
      \draw (v3) -- (v8) -- cycle;
    \end{tikzpicture}
    \caption{Network topology. The left represents the ring topology and the right represents (an instance of) the random graph.}
    \label{fig: graphs}
\end{figure*}

\begin{figure*}
    \centering
    \subfigure[]{\label{a9a_acc_appendix}\includegraphics[width=0.32\textwidth]{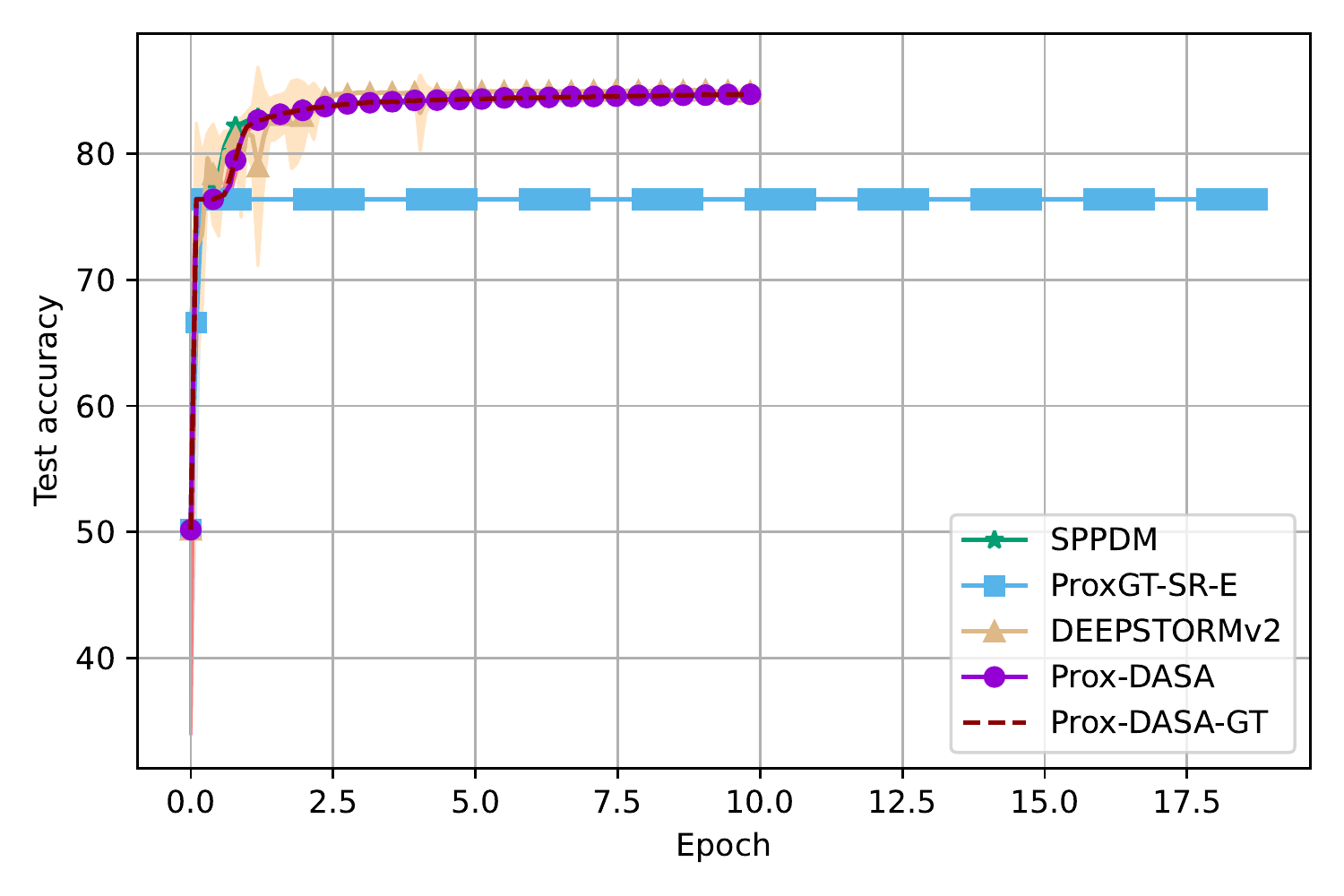}}
    \subfigure[]{\label{a9a_loss_appendix}\includegraphics[width=0.32\textwidth]{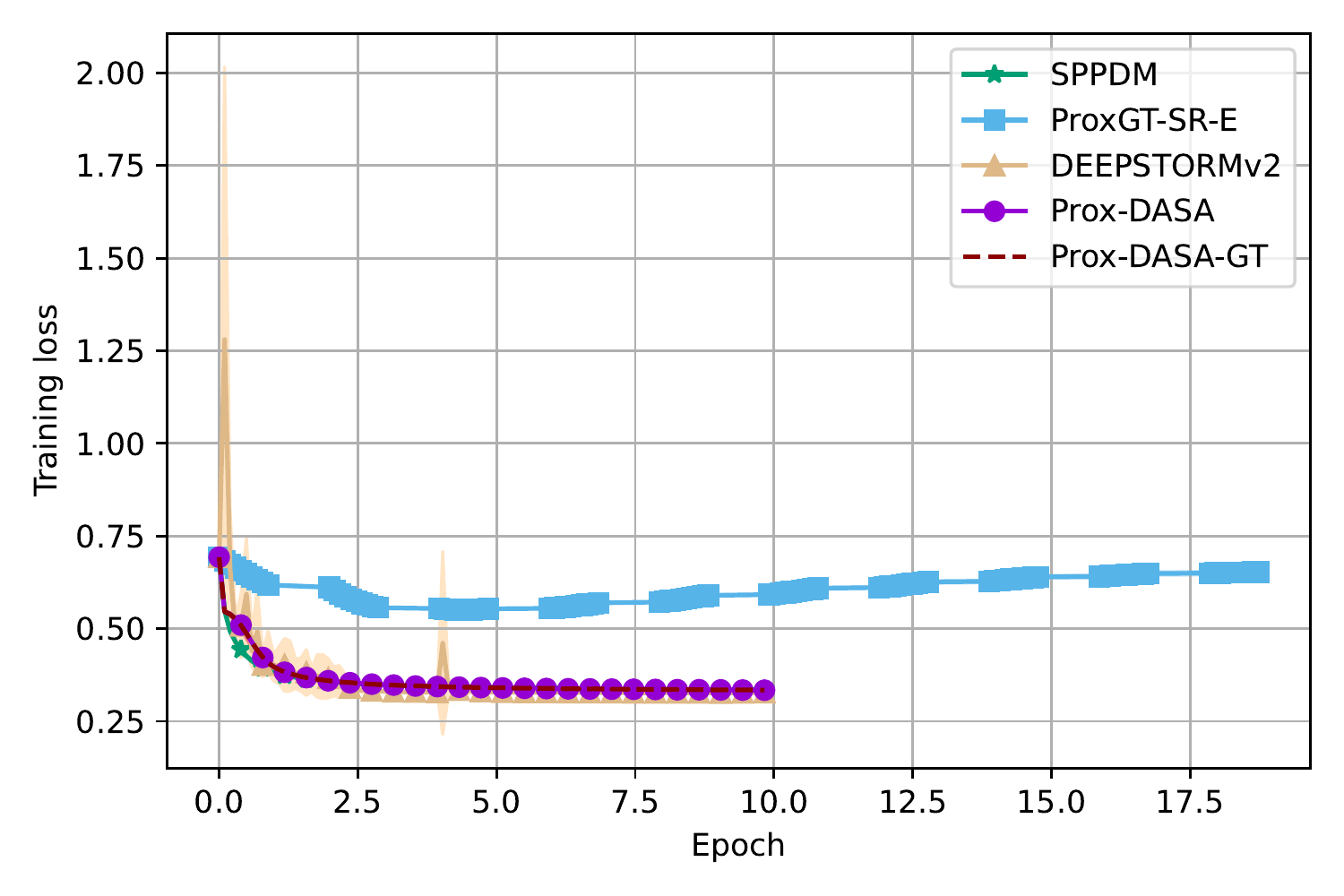}}
    \subfigure[]{\label{a9a_stat_appendix}\includegraphics[width=0.32\textwidth]{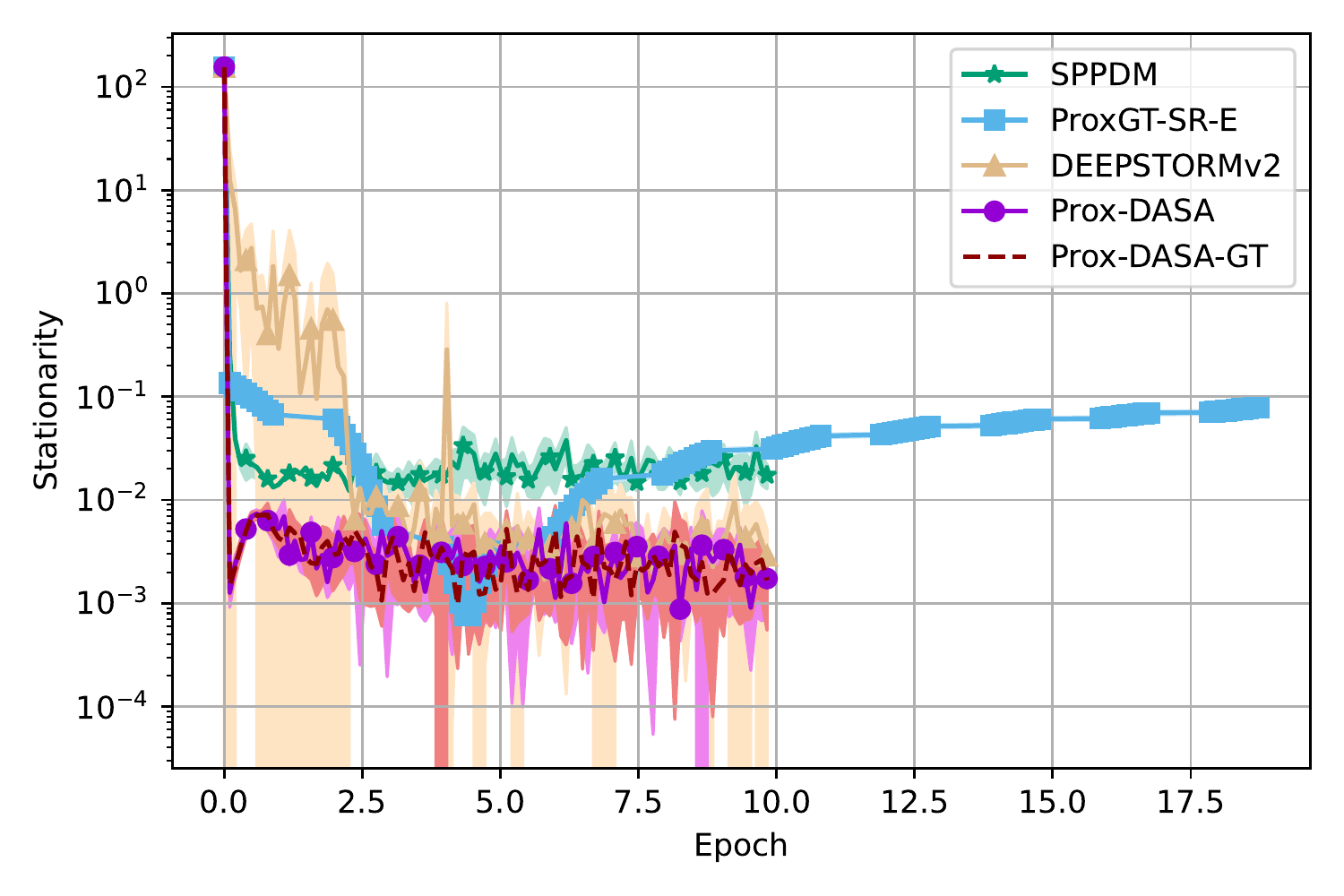}}

    \subfigure[]{\label{mnist_acc_appendix}\includegraphics[width=0.32\textwidth]{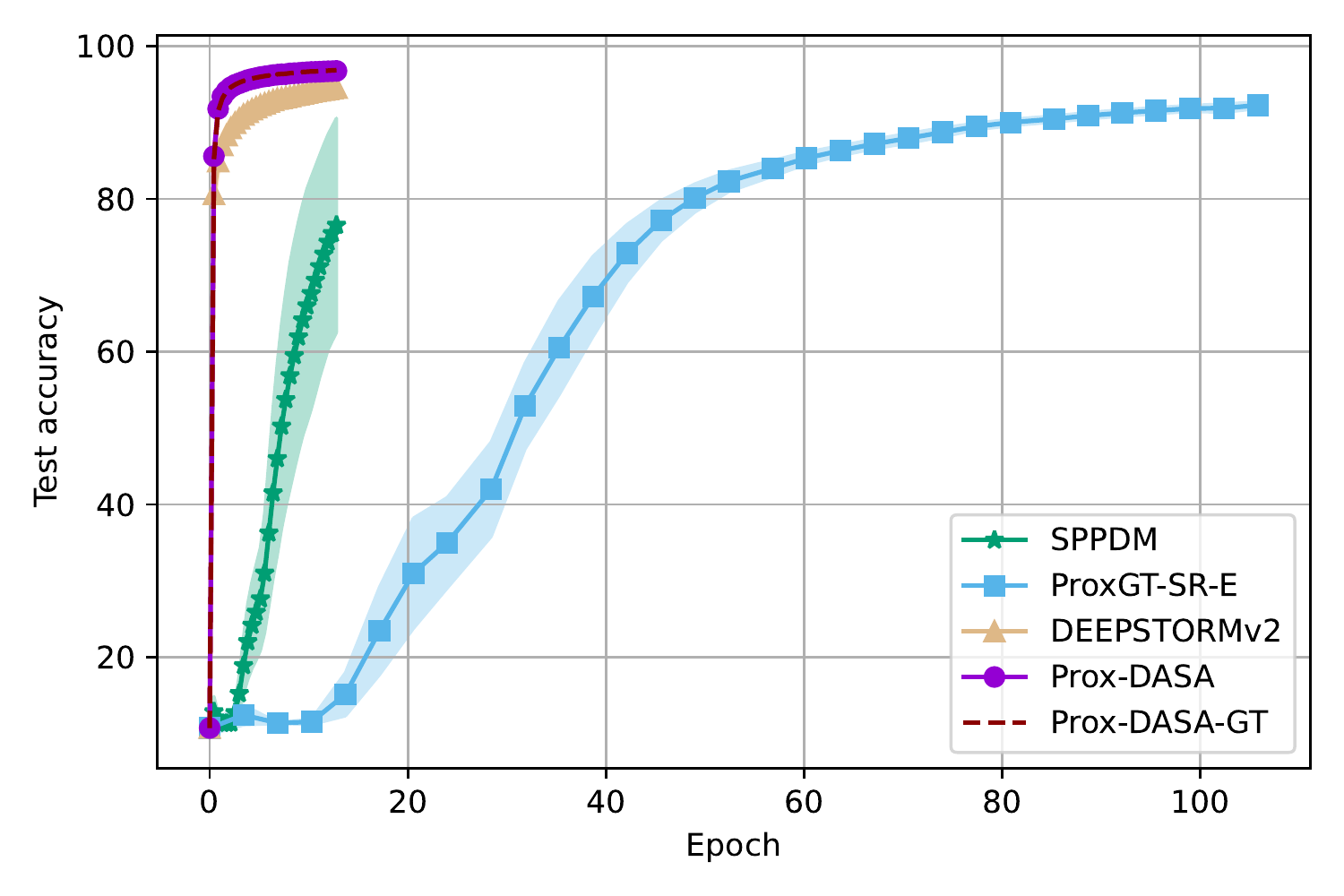}}
    \subfigure[]{\label{mnist_loss_appendix}\includegraphics[width=0.32\textwidth]{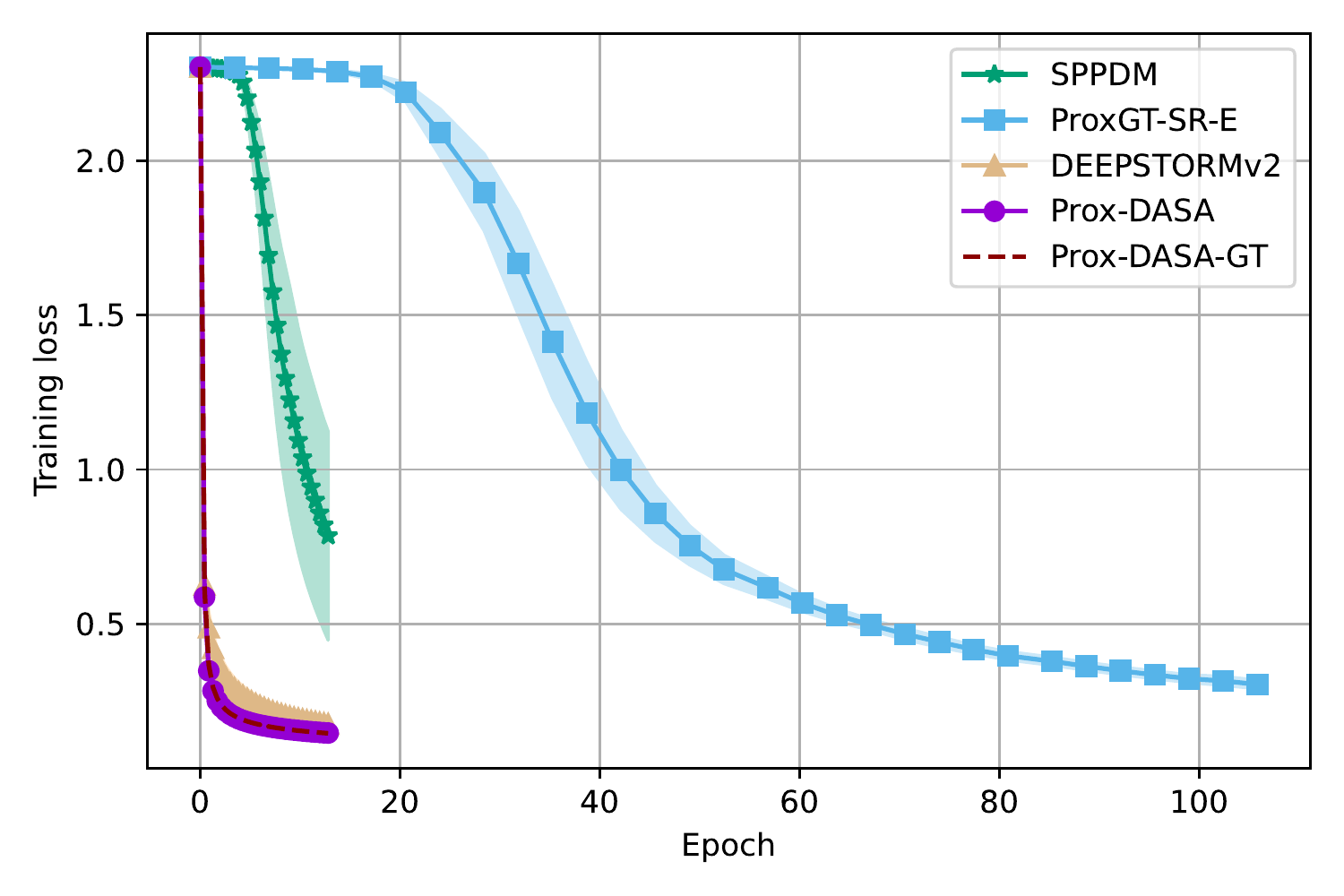}}
    \subfigure[]{\label{mnist_stat_appendix}\includegraphics[width=0.32\textwidth]{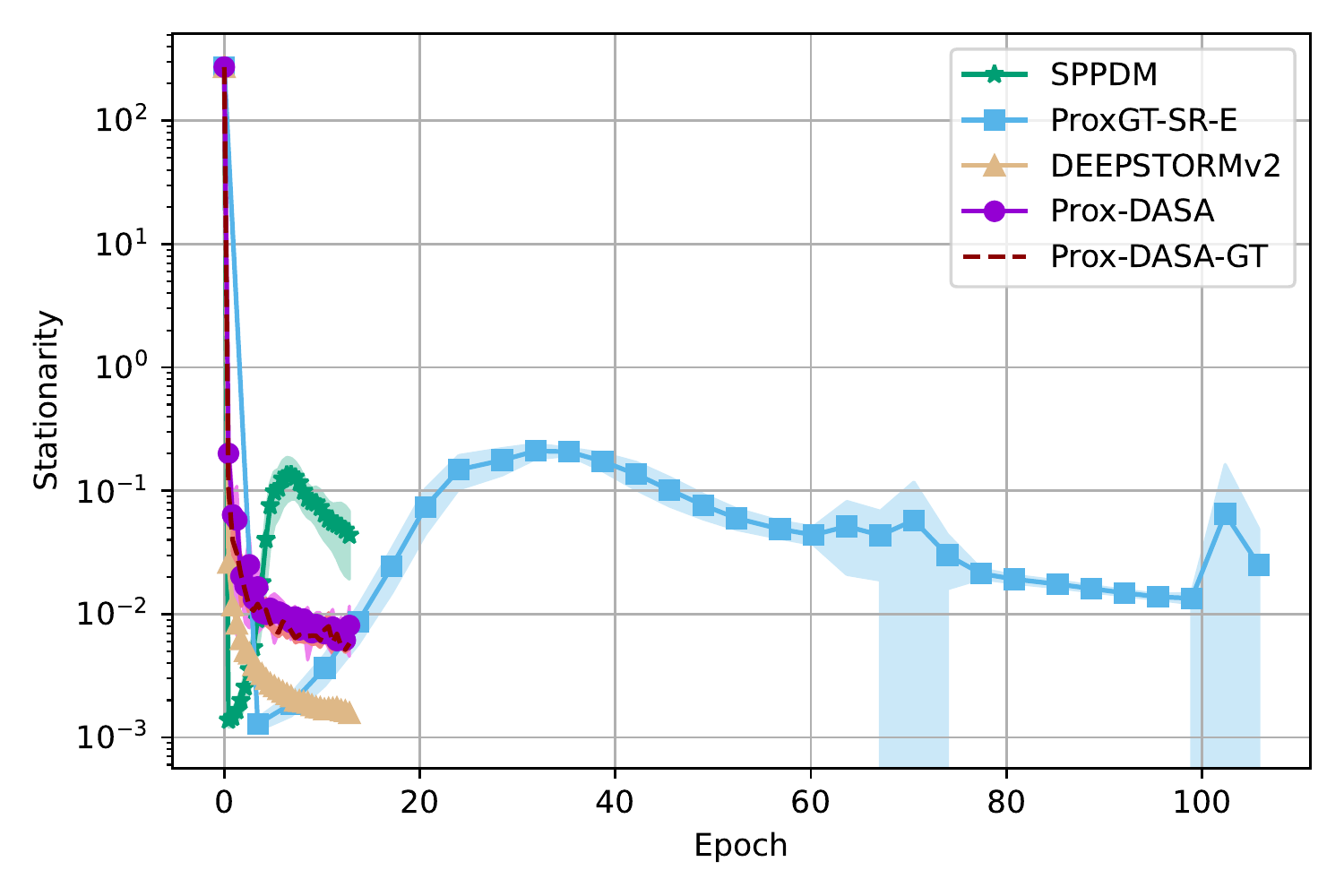}}
    \vspace{-0.2cm}
    \caption{Comparisons between \texttt{SPPDM} \citep{wang2021distributed}, \texttt{ProxGT-SR-E} \citep{xin2021stochastic}, \texttt{DEEPSTORM} \citep{mancino2022proximal}, \texttt{Prox-DASA} \ref{algo: Prox-DASA}, and \texttt{Prox-DASA-GT} \ref{algo: Prox-DASA-GT}. In each experiment \texttt{ProxGT-SR-E} computes 1 more epoch than other algorithms every $q$ iterations. $q$ is chosen to be 1000 for a9a and 32 for MNIST.}\label{fig: full_appendix}
    \vspace{-0.3cm}
\end{figure*}

\section{Accelerated Consensus}
\label{appendix: acc-consensus}
 When the number of communication round $m>1$, we can replace $\mW^m$ with the Chebyshev mixing protocol described in Algorithm \ref{algo: acc-consensus}. Then, we have the following lemma.
\begin{lemma}\label{lem: chebyshev_mixing}
    Suppose $\mW$ satisfies Assumption \ref{aspt:gossipMatrix}. Let $\mA_0, \mA_m$ be the input and output matrix of Algorithm \ref{algo: acc-consensus} respectively. Then, we have
    \[
        \norm{\mA_m - \bar \mA_m} \leq 2\left( 1-\sqrt{1-\rho} \right)^m\norm{\mA_0 - \bar \mA_0}.
    \]
\end{lemma}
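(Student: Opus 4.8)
The plan is to identify the iterates $\mA_t$ produced by Algorithm~\ref{algo: acc-consensus} as $\mA_t = \mA_0\,P_t(\mW)$ for an explicit polynomial $P_t$ built from the Chebyshev polynomials of the first kind, and then to bound the operator norm of $P_m(\mW)$ on the orthogonal complement of the all-ones direction.

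First I would recall the Chebyshev recursion $T_0\equiv 1$, $T_1(x)=x$, $T_{t+1}(x)=2xT_t(x)-T_{t-1}(x)$, and observe that the scalars in the algorithm satisfy $\mu_t = T_t(1/\rho)$, since $\mu_0 = 1 = T_0(1/\rho)$ and $\mu_1 = 1/\rho = T_1(1/\rho)$ obey the same recursion evaluated at $x = 1/\rho$. Setting $P_t(x) := T_t(x/\rho)/T_t(1/\rho) = T_t(x/\rho)/\mu_t$, a two-line induction using the $T$-recursion shows $P_0\equiv 1$, $P_1(x)=x$, and $P_{t+1}(x) = \tfrac{2\mu_t}{\rho\mu_{t+1}}xP_t(x) - \tfrac{\mu_{t-1}}{\mu_{t+1}}P_{t-1}(x)$, which is exactly the recursion defining $\mA_{t+1}$ from $\mA_t\mW$ and $\mA_{t-1}$; since the base cases $\mA_0 = \mA_0 P_0(\mW)$ and $\mA_1 = \mA_0\mW = \mA_0 P_1(\mW)$ hold, induction gives $\mA_t = \mA_0 P_t(\mW)$ for all $t$.

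Next I would exploit the consensus structure. Since $\mW$ is symmetric with $\mW\mathbf 1 = \mathbf 1$, the matrix $\mJ := \tfrac1n\mathbf 1\mathbf 1^\top$ commutes with every polynomial in $\mW$, and $P_t(\mW)\mJ = P_t(1)\mJ = \mJ$ because $P_t(1) = T_t(1/\rho)/T_t(1/\rho) = 1$. Writing $\mathbf P := \mI - \mJ$, this yields $\bar\mA_m = \mA_m\mJ = \mA_0\mJ = \bar\mA_0$ and $\mA_m - \bar\mA_m = (\mA_0-\bar\mA_0)\,\mathbf P\,P_m(\mW)$ (using $\mathbf P^2=\mathbf P$), so by submultiplicativity of the Frobenius norm with the spectral norm, $\|\mA_m-\bar\mA_m\| \le \|\mA_0-\bar\mA_0\|\,\|\mathbf P P_m(\mW)\|_2$. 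The eigenvalues of $\mW$ on the range of $\mathbf P$ are $\lambda_2,\dots,\lambda_n$, all lying in $[-\rho,\rho]$ by Assumption~\ref{aspt:gossipMatrix}, so the spectral theorem gives $\|\mathbf P P_m(\mW)\|_2 = \max_{2\le i\le n}|P_m(\lambda_i)| \le \max_{|\lambda|\le\rho}|P_m(\lambda)| = \big(\max_{|y|\le1}|T_m(y)|\big)/T_m(1/\rho) = 1/T_m(1/\rho)$, using $|T_m|\le 1$ on $[-1,1]$ and $T_m(1/\rho)>0$.

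The only genuinely computational step, and the one I expect to need the most care, is converting $1/T_m(1/\rho)$ into the stated geometric rate. Using the closed form $T_m(x) = \tfrac12\big[(x+\sqrt{x^2-1})^m + (x-\sqrt{x^2-1})^m\big] \ge \tfrac12(x+\sqrt{x^2-1})^m$ for $x\ge1$ at $x=1/\rho$, one gets $1/T_m(1/\rho) \le 2\big(\rho/(1+\sqrt{1-\rho^2})\big)^m = 2\big((1-\sqrt{1-\rho^2})/\rho\big)^m$ after rationalizing. It then suffices to prove the elementary inequality $(1-\sqrt{1-\rho^2})/\rho \le 1-\sqrt{1-\rho}$ for $\rho\in(0,1)$: substituting $s=\sqrt{1-\rho}\in(0,1)$, so $\rho=1-s^2$ and $\sqrt{1-\rho^2}=s\sqrt{2-s^2}$, and clearing the positive denominator reduces the claim to $\sqrt{2-s^2}\ge 1+s-s^2$, and squaring (both sides are positive on $(0,1)$) and simplifying leaves $(1-s)^3(1+s)\ge 0$, which is immediate. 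Chaining the three displayed bounds gives $\|\mA_m-\bar\mA_m\| \le 2(1-\sqrt{1-\rho})^m\|\mA_0-\bar\mA_0\|$, as claimed. Apart from this scalar estimate, the main things to watch are the Frobenius/spectral-norm bookkeeping and keeping track of which factor the projector $\mathbf P$ is attached to.
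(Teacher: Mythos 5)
Your proof is correct, and in fact it supplies an argument the paper itself omits: Lemma~\ref{lem: chebyshev_mixing} is stated in the appendix without proof, with the main text deferring to standard accelerated-consensus references. Your route is the standard Chebyshev-acceleration argument, and every step checks out. The identification $\mu_t = T_t(1/\rho)$ and $\mA_t = \mA_0 P_t(\mW)$ with $P_t(x) = T_t(x/\rho)/T_t(1/\rho)$ follows by matching recursions exactly as you say; $P_t(1)=1$ gives $\bar\mA_m = \bar\mA_0$ and lets you insert the projector $\mI - \tfrac{1}{n}\bfone\bfonet$, after which the spectral theorem and $|T_m|\le 1$ on $[-1,1]$ yield $\norm{\mA_m - \bar\mA_m} \le \norm{\mA_0-\bar\mA_0}/T_m(1/\rho)$. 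The scalar estimate is also right: $1/T_m(1/\rho) \le 2\bigl((1-\sqrt{1-\rho^2})/\rho\bigr)^m$ from the closed form of $T_m$, and your substitution $s=\sqrt{1-\rho}$ reduces $(1-\sqrt{1-\rho^2})/\rho \le 1-\sqrt{1-\rho}$ to $\sqrt{2-s^2}\ge 1+s-s^2$ and then to $(1-s)^3(1+s)\ge 0$; I verified the algebra and it is exact. Two cosmetic remarks only: the bound $\norm{\mA\mB}\le\norm{\mA}\,\normw{\mB\T}{2}$ you invoke is precisely the paper's Lemma~\ref{lem: F_norm_ineq}, so you can cite it rather than re-derive it; and the degenerate case $\rho=0$ (where the algorithm divides by $\rho$) is excluded implicitly, which is consistent with how the paper treats it.
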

Hence, we obtain a linear convergence rate of $\left( 1-\sqrt{1-\rho} \right)$ instead of $\rho$. By virtue of that, we can set $m=\lceil \frac{1}{\sqrt{1-\rho}} \rceil$ to obtain a topology-independent iteration complexity.

\section{Convergence Analysis}

We present the complete proof in this section. In the sequel, $\|\cdot\|$ denotes the $\ell_2$-norm for vectors and the Frobenius norm for matrices. $\|\cdot\|_2$ denotes the spectral norm for matrices. $\mathbf{1}$ represents the all-one vector. We identify vectors at agent $i$ in the subscript and use the superscript for the algorithm step. For example, the optimization variable of agent $i$ at step $k$ is denoted as $x^k_i$, and $z^k_i$ is the corresponding dual variable. We use uppercase bold letters to represent the matrix that collects all the variables from agents (corresponding lowercase) as columns. To be specific,
\[
\mX_k = \left[x_{1}^{k}, \dots, x_{n}^{k}\right], \quad \mZ_k = \left[z_{1}^{k}, \dots, z_{n}^{k}\right], \quad  \mY_k = \left[y_{1}^{k},\dots, y_{n}^{k}\right], \quad \mV_{k+1} = \left[ v^{k+1}_1, \dots, v^{k+1}_n \right].
\]
We add an overbar to a letter to denote the average over all agents. For example, 
\[
\bar x^k = \avein x_{i}^{k} = \frac{1}{n}\mX_k \bfone, \quad \bar \mX_k = [\bar x^k, \dots, \bar x^k] = \bar x^k \bfonet = \frac{1}{n}\mX_k \bfone \bfonet
\]
Hence, the consensus errors for iterates $\{x^k_i\}$ and dual variables $\{z^k_i\}$ can be written as 
\[
\avein \norm{x^k_i - \bar x^k}^2  = \frac{1}{n}\norm{\mX_k - \bar \mX_k}^2,\quad \avein \norm{z^k_i - \bar z^k}^2  = \frac{1}{n}\norm{\mZ_k - \bar \mZ_k}^2.
\]
We denote $L_{\nabla F} = \underset{1\leq i\leq n}{\max} \{L_{\nabla F_i}\}$ for ease of presentation. Our proof heavily relies on the merit function below:
\begin{equation}\label{def: merit_fun}
    W(\bar x^k,\bar z^k) = \underbrace{\Phi(\bar x^{k}) - \Phi_*}_{\text{function value gap}} + \underbrace{\Psi(\bar x^k) - \eta(\bar x^{k}, \bar z^{k})}_{\text{primal convergence}} + \lambda \underbrace{\norm{\nabla F(\bar x^k) - \bar z^k}^2}_{\text{dual convergence}},
\end{equation}
where
\begin{equation}\label{def: eta}
    \eta(x, z) = \min_{y\in \realset^d}\left\{\<z,y-x> + \frac{1}{2\gamma}\|y-x\|^2 + \Psi(y)\right\}.
\end{equation}

\subsection{Technical Lemmas}

\begin{lemma}\label{lem: F_norm_ineq}
    For any $p, q, r\in \naturalset_+$ and matrix $\mA\in \realset^{p\times q}, \mB\in \realset^{q\times r}$, we have:
    \[
        \|\mA\mB\| \leq \min\left(\|\mA\|_2\cdot \|\mB\|, \|\mA\|\cdot \|\mB\T\|_2\right).
    \]
\end{lemma}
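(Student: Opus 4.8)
The plan is to establish the two bounds inside the minimum one at a time. The first, $\|\mA\mB\|\le\|\mA\|_2\cdot\|\mB\|$, follows from decomposing $\mB$ into columns and using the variational characterization of the spectral norm; the second then follows for free by transposition.

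First I would write $\mB=[b_1,\dots,b_r]$ with $b_j\in\realset^q$ its $j$-th column, so that $\mA\mB=[\mA b_1,\dots,\mA b_r]$ and hence, by the definition of the Frobenius norm as the square root of the sum of squared column norms,
\[
\|\mA\mB\|^2=\sum_{j=1}^r\|\mA b_j\|^2 .
\]
Since $\|\mA\|_2=\sup_{v\neq 0}\|\mA v\|/\|v\|$ is the operator norm induced by the Euclidean norm, we have $\|\mA b_j\|\le\|\mA\|_2\|b_j\|$ for each $j$. Summing over $j$ gives $\|\mA\mB\|^2\le\|\mA\|_2^2\sum_{j=1}^r\|b_j\|^2=\|\mA\|_2^2\|\mB\|^2$, i.e. $\|\mA\mB\|\le\|\mA\|_2\cdot\|\mB\|$.

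For the second bound I would use that the Frobenius norm is invariant under transposition together with $(\mA\mB)^\top=\mB^\top\mA^\top$: applying the inequality just proved, with $\mB^\top$ playing the role of $\mA$ and $\mA^\top$ playing the role of $\mB$, yields
\[
\|\mA\mB\|=\|\mB^\top\mA^\top\|\le\|\mB^\top\|_2\cdot\|\mA^\top\|=\|\mB^\top\|_2\cdot\|\mA\| .
\]
Taking the minimum of the two estimates gives the claim.

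This is entirely routine, so there is no genuine obstacle; the only points requiring a little care are (i) invoking the column-wise expression for the Frobenius norm correctly, and (ii) keeping track of the transpose bookkeeping in the second step (noting $\|\mA^\top\|=\|\mA\|$ for the Frobenius norm). I would keep the spectral-norm factor written as $\|\mB^\top\|_2$ exactly as in the statement rather than simplifying it.
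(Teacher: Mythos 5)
Your proof is correct: the column-wise expansion of the Frobenius norm combined with the operator-norm bound $\|\mA b_j\|\le\|\mA\|_2\|b_j\|$ gives the first estimate, and transposition gives the second. The paper states this lemma without proof (treating it as a standard fact), and your argument is exactly the canonical one that would be supplied.
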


\begin{lemma}\label{lem: W_m}
    Suppose $\mW$ satisfies Assumption \ref{aspt:gossipMatrix}. For any $m\in \naturalset_+$, we have
    \[
        \norm{\mW^m - \frac{\bfone_n\bfonet_n}{n}}_2\leq \rho^m
    \]
\end{lemma}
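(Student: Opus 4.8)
The plan is to diagonalize $\mW$ and read the spectral norm off its eigenvalues. By Assumption~\ref{aspt:gossipMatrix} the matrix $\mW$ is symmetric, so the spectral theorem provides an orthonormal basis $u_1,\dots,u_n$ of $\realset^n$ with $\mW u_i=\lambda_i u_i$ and $1=\lambda_1>\lambda_2\geq\dots\geq\lambda_n$. Double stochasticity gives $\mW\bfone_n=\bfone_n$, so the eigenvalue $1$ is simple and we may take $u_1=\bfone_n/\sqrt n$. Since the remaining eigenvalues are ordered as $\lambda_2\geq\dots\geq\lambda_n$, each of them satisfies $|\lambda_i|\leq\max\{|\lambda_2|,|\lambda_n|\}=\rho<1$ for $2\leq i\leq n$ (if $\lambda_i\geq 0$ then $\lambda_i\leq\lambda_2$, and if $\lambda_i<0$ then $-\lambda_i\leq-\lambda_n$).

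Next I would pass to powers. Writing $\mW=\sum_{i=1}^n\lambda_i u_iu_i^\top$ and noting that $\tfrac{\bfone_n\bfonet_n}{n}=u_1u_1^\top$ is exactly the spectral projector onto $u_1$, one gets $\mW^m=\sum_{i=1}^n\lambda_i^m u_iu_i^\top$ and hence
\[
\mW^m-\frac{\bfone_n\bfonet_n}{n}=\sum_{i=2}^n\lambda_i^m\, u_iu_i^\top .
\]
The right-hand side is a symmetric matrix whose eigenvalues are $0$ (on $\mathrm{span}\{u_1\}$) together with $\lambda_2^m,\dots,\lambda_n^m$, so its spectral norm equals $\max_{2\leq i\leq n}|\lambda_i|^m$. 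Combining with the bound $|\lambda_i|\leq\rho$ for $i\geq 2$ yields $\norm{\mW^m-\tfrac{\bfone_n\bfonet_n}{n}}_2\leq\rho^m$, as claimed.

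An equivalent route avoids the explicit eigenbasis: setting $\mJ:=\tfrac{\bfone_n\bfonet_n}{n}$, double stochasticity gives $\mW\mJ=\mJ\mW=\mJ=\mJ^2$, so expanding $(\mW-\mJ)^m$ by the binomial theorem collapses every mixed term to a multiple of $\mJ$ and these telescope to zero, leaving $(\mW-\mJ)^m=\mW^m-\mJ$; since $\mW-\mJ$ is symmetric with eigenvalues $0$ and $\lambda_2,\dots,\lambda_n$ one has $\norm{\mW-\mJ}_2=\rho$, and submultiplicativity of the spectral norm finishes the proof. There is essentially no obstacle here; the only points meriting a sentence are identifying $\bfone_n$ as the top eigenvector and observing that the ordering of the $\lambda_i$ forces $|\lambda_i|\le\rho$ for all $i\ge 2$.
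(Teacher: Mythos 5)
Your proof is correct; the paper states this lemma without proof as a standard fact, and your spectral-decomposition argument (equivalently, the observation that $\mW\mJ=\mJ\mW=\mJ=\mJ^2$ forces $(\mW-\mJ)^m=\mW^m-\mJ$ with $\|\mW-\mJ\|_2=\rho$) is exactly the canonical justification. Both routes you give are valid, and the case analysis showing $|\lambda_i|\le\rho$ for $i\ge 2$ is handled correctly.
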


\begin{lemma}\label{lem: cons_decay}
    Suppose we are given three sequences $\{a_n\}_{n=0}^{\infty}, \{b_n\}_{n=0}^{\infty},\ \{\tau_n\}_{n=-1}^{\infty},$ and a constant $r$ satisfying
    \begin{equation}\label{ineq: conditions_cons_decay}
        a_{k+1}\leq r a_k + b_{k},\ a_k\geq 0,\ b_k\geq 0,\  0 = \tau_{-1}\leq \tau_{k+1}\leq \tau_k\leq 1,
    \end{equation}
    for all $k\geq 0$. Then for any $K > 0$, we have
    \[
        \sum_{k=0}^{K}\tau_ka_k\leq \frac{1}{1-r}\left(\tau_0a_0 + \sum_{k=0}^{K}\tau_kb_k\right)
    \]
\end{lemma}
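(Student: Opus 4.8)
The plan is to turn the weighted sum into a self-referential inequality for $S \coloneqq \sum_{k=0}^{K}\tau_k a_k$ and then solve for $S$, using the standing hypothesis $0\le r<1$ (implicit in dividing by $1-r$). First I would rewrite the recursion one step back: for every $k\ge 1$ we have $a_k\le r a_{k-1}+b_{k-1}$. Multiplying by $\tau_k\ge 0$ and then invoking the monotonicity $\tau_k\le\tau_{k-1}$ together with $a_{k-1},b_{k-1}\ge 0$ (and $r\ge 0$) yields the pointwise bound $\tau_k a_k\le r\tau_{k-1}a_{k-1}+\tau_{k-1}b_{k-1}$.

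Next I would sum this inequality over $k=1,\dots,K$. The left-hand side is $S-\tau_0 a_0$, and the right-hand side equals $r\sum_{k=0}^{K-1}\tau_k a_k+\sum_{k=0}^{K-1}\tau_k b_k$, which is at most $rS+\sum_{k=0}^{K}\tau_k b_k$ once we re-add the nonnegative terms carrying index $K$. This gives $S\le \tau_0 a_0+rS+\sum_{k=0}^{K}\tau_k b_k$, and rearranging — this is the only point where $r<1$ is used — produces exactly $S\le \frac{1}{1-r}\bigl(\tau_0 a_0+\sum_{k=0}^{K}\tau_k b_k\bigr)$, which is the claimed bound.

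As a backup presentation, I would keep the fully unrolled version in reserve: iterating the recursion gives $a_k\le r^k a_0+\sum_{j=0}^{k-1}r^{k-1-j}b_j$; substituting into $\sum_k\tau_k a_k$, swapping the order of summation in the resulting double sum, and using $\tau_k\le\tau_j$ for $k\ge j$ to bound $\sum_{k\ge j+1}\tau_k r^{\,k-1-j}\le \tau_j\sum_{\ell\ge 0}r^\ell=\tau_j/(1-r)$ (and similarly $\sum_k\tau_k r^k\le \tau_0/(1-r)$ for the initial-condition term) recovers the same constant. Both routes are elementary and give identical conclusions, so I would present whichever is shorter in the write-up.

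There is no genuine obstacle here; the proof is a few lines. The only things demanding care are the index bookkeeping for the one-step shift (so that the $\tau$-monotonicity is applied in the correct direction, $\tau_k\le\tau_{k-1}$, and never reversed), the harmless padding of partial sums up to index $K$ via nonnegativity of $a_k,b_k$, and making explicit the standing assumption $0\le r<1$ without which the statement would be vacuous.
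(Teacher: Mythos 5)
Your proposal is correct and is essentially the paper's own argument: the paper multiplies $(1-r)a_k \le a_k - a_{k+1} + b_k$ by $\tau_k$ and applies summation by parts, which is the same one-step index shift plus $\tau$-monotonicity computation you carry out, just organized as solving a self-referential inequality for $S$. The only minor difference is that your pointwise replacement $r\tau_k a_{k-1}\le r\tau_{k-1}a_{k-1}$ additionally uses $r\ge 0$, whereas the paper's summation-by-parts route needs only $r<1$; in the contexts where the lemma is invoked $r\in(0,1)$, so this is immaterial.
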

\begin{proof}
    Note that we have
    \begin{align*}
        (1-r)\sum_{k=0}^{K}\tau_ka_k \leq \sum_{k=0}^{K}\tau_k(a_k - a_{k+1} + b_k) = \sum_{k=0}^{K}(\tau_k - \tau_{k-1})a_k - \tau_Ka_{K+1} + \sum_{k=0}^{K}\tau_kb_k\leq \tau_0a_0 + \sum_{k=0}^{K}\tau_kb_k,
    \end{align*}
    where the inequalities use \eqref{ineq: conditions_cons_decay}, and the equality uses summation by parts.
\end{proof}

\begin{lemma}\label{lem:eta-smooth}
Let $\Psi: \realset^d \rightarrow \realset\cup\{+\infty\}$ be a closed proper convex function.
\begin{itemize}
\item[(a)] Let $\eta(x,z)$ be the function defined in \eqref{def: eta}. Then, $\nabla \eta$ is $C_\gamma$-Lipschitz continuous where
\begin{equation}
    C_{\gamma} = 2 \sqrt{(1+\frac{1}{\gamma})^2 + (1 + \frac{\gamma}{2})^2}.
\end{equation}
\item[(b)] For $x, z \in \realset^d$ and $\gamma\in\realset$, let $y_+ = \prox_{\Psi}^{\gamma}(x-\gamma z) = \underset{y\in \realset^d} {\argmin} \left\{\<z,y-x> + \frac{1}{2\gamma}\|y-x\|^2 + \Psi(y)\right\}$, then for any $y\in \realset^d$, we have
\begin{equation*}
    \Psi(y_+) - \Psi(y) \leq \< z + \gamma^{-1}(y_+ - x), y-y_+ >
\end{equation*}
\end{itemize}
\end{lemma}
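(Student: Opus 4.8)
\textbf{Proof proposal for Lemma~\ref{lem:eta-smooth}.}

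For part (b), the plan is to use the first-order optimality condition of the strongly convex minimization problem defining $y_+$. Since $y_+ = \argmin_y\{\<z,y-x> + \frac{1}{2\gamma}\|y-x\|^2 + \Psi(y)\}$, the subdifferential inclusion $0 \in z + \gamma^{-1}(y_+ - x) + \partial\Psi(y_+)$ holds, so $-z - \gamma^{-1}(y_+ - x) \in \partial\Psi(y_+)$. Applying the subgradient inequality for the convex function $\Psi$ at $y_+$ against an arbitrary $y$ gives $\Psi(y) \geq \Psi(y_+) + \<-z-\gamma^{-1}(y_+-x), y - y_+>$, which rearranges exactly to the claimed inequality. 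This part is routine.

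For part (a), the plan is to obtain an explicit formula for $\nabla\eta$ via Danskin-type / envelope reasoning and then bound the Lipschitz constant of the resulting map $(x,z)\mapsto\nabla\eta(x,z)$. Write $y_+(x,z) = \prox_\Psi^\gamma(x-\gamma z)$ for the unique minimizer. Differentiating $\eta(x,z) = \<z,y_+-x> + \frac{1}{2\gamma}\|y_+-x\|^2 + \Psi(y_+)$ and using that the derivative of the inner objective with respect to $y$ vanishes at $y_+$ (so the $y_+$-dependence contributes nothing), I expect $\nabla_x \eta(x,z) = -z - \gamma^{-1}(y_+ - x)$ and $\nabla_z\eta(x,z) = y_+ - x$. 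Since $y_+(\cdot)$ is a composition of the nonexpansive map $\prox_\Psi^\gamma$ with the affine map $(x,z)\mapsto x-\gamma z$, it is Lipschitz: $\|y_+(x,z) - y_+(x',z')\| \leq \|(x-x') - \gamma(z-z')\| \leq \|x-x'\| + \gamma\|z-z'\|$. From here, one bounds $\|\nabla_x\eta(x,z) - \nabla_x\eta(x',z')\| \leq \|z-z'\| + \gamma^{-1}\|y_+ - y_+'\| + \gamma^{-1}\|x-x'\| \leq (1+\gamma^{-1})\|x-x'\| + 2\|z-z'\|$ and similarly $\|\nabla_z\eta(x,z)-\nabla_z\eta(x',z')\| \leq \|x-x'\| + (1+\gamma)\|z-z'\|$, i.e.\ each component is Lipschitz with constants of the claimed shape; combining the two blocks into the full gradient and using $\|(a,b)\| \leq \sqrt{2}\max\{\|a\|,\|b\|\}$ type bounds (or a direct $2\times 2$ operator-norm estimate) yields the stated $C_\gamma = 2\sqrt{(1+\frac1\gamma)^2 + (1+\frac\gamma2)^2}$.

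The main obstacle is getting the constant $C_\gamma$ exactly as stated rather than merely up to a constant factor; this requires care in how the joint Lipschitz bound on $(\nabla_x\eta, \nabla_z\eta)$ is assembled — specifically, treating the perturbation as a vector in $\realset^{2d}$ and bounding the induced map by a $2\times 2$ matrix of coefficients whose spectral norm one must estimate, and in choosing whether to split $\|x-x'\|$, $\|z-z'\|$ symmetrically or via Young's inequality. A secondary technical point is justifying differentiability of $\eta$ in the first place when $\Psi$ is merely closed proper convex (not smooth): this follows because $\eta(x,z) + \frac{1}{2\gamma}\|x\|^2$-type manipulations show $\eta$ is (up to simple terms) a Moreau envelope, which is always continuously differentiable, so the envelope theorem applies without needing $\Psi$ smooth. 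I would state that reduction to the Moreau envelope explicitly, since it cleanly delivers both the existence of $\nabla\eta$ and the formulas above.
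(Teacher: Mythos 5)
Your proposal matches the paper's proof in both parts: part (b) is the same optimality-condition-plus-subgradient-inequality argument, and part (a) uses the same reduction $\eta(x,z)=\text{env}_{\gamma\Psi}(x-\gamma z)-\frac{\gamma}{2}\|z\|^2$ to obtain $\nabla_x\eta=-z-\gamma^{-1}(y_+-x)$ and $\nabla_z\eta=y_+-x$, followed by nonexpansiveness of the proximal map. The only caveat is that your intermediate block coefficients are slightly off (the correct bounds are $\|\nabla_x\eta(x,z)-\nabla_x\eta(x',z')\|\le 2\gamma^{-1}\|x-x'\|+2\|z-z'\|$ and $\|\nabla_z\eta(x,z)-\nabla_z\eta(x',z')\|\le 2\|x-x'\|+\gamma\|z-z'\|$); summing these two and applying Cauchy--Schwarz to the coefficient vector $\left(2(1+\tfrac{1}{\gamma}),\,2(1+\tfrac{\gamma}{2})\right)$ yields exactly the stated $C_\gamma$, which is how the paper assembles the constant.
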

\begin{proof}
We prove (a) at first. Recall that the Moreau envelope of a convex and closed function $\Psi$ multiplied by a scalar $\gamma$ is defined by
\begin{equation*}
    \text{env}_{\gamma \Psi}(x) = \underset{y\in\realset^d}{\min}\left\{\frac{1}{2\gamma}\norm{y-x}^2 + \Psi(y)\right\},
\end{equation*}
and its gradient is given by $\nabla \text{env}_{\gamma\Psi}(x) = \frac{1}{\gamma} (x - \prox_{\Psi}^{\gamma}(x))$ where $\prox_{\Psi}^{\gamma}(x)=\underset{y\in\realset^d}{\argmin}\left\{\frac{1}{2\gamma}\norm{y-x}^2 + \Psi(y)\right\}$. Note that $\eta(x, z) = \text{env}_{\gamma \Psi}\left(x-\gamma z\right) - \frac{\gamma}{2}\norm{z}^2$. Therefore, the partial gradients of $\eta$ are given by
\begin{equation}
    \nabla_x \eta (x, z) = -z - \gamma^{-1} \left(\prox_{\Psi}^{\gamma}\left(x-  \gamma z\right) - x\right), \quad \nabla_z \eta (x, z) =  \prox_{\Psi}^{\gamma}\left(x-  \gamma z\right) - x.
\end{equation}
Hence, for any $(x, z)$ and $(x', z')$,
\begin{align}
    \norm{\nabla \eta (x, z)  -\nabla \eta (x', z') } &\leq \norm{\nabla_x \eta (x, z) - \nabla_x \eta (x', z')} + \norm{\nabla_z \eta (x, z) - \nabla_z \eta (x', z')}\notag\\
    &\leq 2(1+1/\gamma) \norm{x - x'} + (2+\gamma) \norm{z- z'} \leq C_{\gamma} \norm{(x, z) - (x', z')}.\notag
\end{align}
To prove (b), denote the subdifferential of $\Psi(x)$ as $\partial \Psi(x)$. By the optimality condition, we have $\mathbf{0}$ is a subgradient of $H(y) = \<z,y-x> + \frac{1}{2\gamma}\|y-x\|^2 + \Psi(y)$ at $y_+$, i.e.,
\begin{equation*}
    \mathbf{0} \in z + \gamma^{-1}(y_+ - x) + \partial \Psi(y_+).
\end{equation*}
Hence, there exists a subgradient of $\Psi(y)$ at $y_+$, denoted by $\Tilde{\nabla}\Psi(y_+)$, such that
\begin{equation*}
    \Tilde{\nabla}\Psi(y_+) = - z - \gamma^{-1}(y_+ - x).
\end{equation*}
Finally, by the convexity of $\Psi$, we have for any $y\in\realset^d$,
\begin{equation*}
    \Psi(y) - \Psi(y_+) \geq \< \Tilde{\nabla} \Psi(y_+) , y-y_+> = \< - z - \gamma^{-1}(y_+ - x), y - y_+>,
\end{equation*}
which completes the proof.
\end{proof}

\subsection{Building Blocks of Main Proof}

The following lemma connects the consensus error of $\mY$ to the consensus errors of $\mX$ and $\mZ$.
\begin{lemma}\label{lem: consensus_y}
    Let $y^k_+ = \prox(\bar x^k - \gamma \bar z^k)$. Then for any $k\geq 0$ and $\gamma>0$, we have
    \begin{equation*}
        \norm{y^k_+ - \bar y^k}^2 + \frac{1}{n} \norm{\mY_k - \bar \mY_k}^2 = \frac{1}{n} \sum_{i=1}^{n} \norm{y_i^k - y^k_+}^2 \leq \frac{2}{n} \left\{\|\mX_{k} - \bar \mX_k\|^2 + \gamma^2\|\mZ_{k} - \bar \mZ_k\|^2 \right\}.
    \end{equation*}
\end{lemma}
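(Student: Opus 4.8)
The plan is to separate the claim into its two parts: the stated identity, which is nothing but the bias–variance (Pythagorean) decomposition of a mean-squared quantity, and the stated inequality, which is a direct consequence of the $1$-Lipschitz continuity of the proximal operator.

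First I would establish the identity. For any vectors $a_1,\dots,a_n\in\realset^d$ with mean $\bar a=\frac1n\sum_{i=1}^n a_i$ and any fixed $b\in\realset^d$, one has
\[
\frac1n\sum_{i=1}^n\norm{a_i-b}^2=\frac1n\sum_{i=1}^n\norm{a_i-\bar a}^2+\norm{\bar a-b}^2,
\]
since the cross term $\frac2n\sum_{i=1}^n\langle a_i-\bar a,\ \bar a-b\rangle$ vanishes because $\sum_{i=1}^n(a_i-\bar a)=\mathbf 0$. Applying this with $a_i=y_i^k$, $\bar a=\bar y^k$, $b=y_+^k$, and recalling that $\frac1n\sum_{i=1}^n\norm{y_i^k-\bar y^k}^2=\frac1n\norm{\mY_k-\bar\mY_k}^2$, yields
\[
\frac1n\sum_{i=1}^n\norm{y_i^k-y_+^k}^2=\norm{y_+^k-\bar y^k}^2+\frac1n\norm{\mY_k-\bar\mY_k}^2,
\]
which is the first equality in the statement.

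Next I would bound $\frac1n\sum_{i=1}^n\norm{y_i^k-y_+^k}^2$. Here $y_i^k=\prox_\Psi^\gamma(x_i^k-\gamma z_i^k)$ and $y_+^k=\prox_\Psi^\gamma(\bar x^k-\gamma\bar z^k)$. Under Assumption \ref{aspt:Psi}, $\Psi$ is closed proper convex, so $\prox_\Psi^\gamma$ is single-valued and nonexpansive. Hence
\[
\norm{y_i^k-y_+^k}^2\le\norm{(x_i^k-\bar x^k)-\gamma(z_i^k-\bar z^k)}^2\le 2\norm{x_i^k-\bar x^k}^2+2\gamma^2\norm{z_i^k-\bar z^k}^2,
\]
where the last step uses $\norm{u-v}^2\le 2\norm{u}^2+2\norm{v}^2$. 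Summing over $i$, dividing by $n$, and rewriting $\frac1n\sum_{i=1}^n\norm{x_i^k-\bar x^k}^2=\frac1n\norm{\mX_k-\bar\mX_k}^2$ and $\frac1n\sum_{i=1}^n\norm{z_i^k-\bar z^k}^2=\frac1n\norm{\mZ_k-\bar\mZ_k}^2$ gives the claimed bound $\frac2n\{\norm{\mX_k-\bar\mX_k}^2+\gamma^2\norm{\mZ_k-\bar\mZ_k}^2\}$, completing the proof.

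There is essentially no obstacle here; the only non-elementary ingredient is nonexpansiveness of $\prox_\Psi^\gamma$, which is classical and is already used implicitly throughout the paper. The one point I would be careful to state explicitly is that Assumption \ref{aspt:Psi} guarantees $\prox_\Psi^\gamma$ is well-defined and $1$-Lipschitz, so that the chain of inequalities above is justified.
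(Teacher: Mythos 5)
Your proof is correct and follows essentially the same route as the paper: the identity is the Pythagorean/variance decomposition (the paper states it as $\frac{1}{n}\sum_i\|v_i-\bar v\|^2=\frac{1}{n}\sum_i\|v_i\|^2-\|\bar v\|^2$ applied to $v_i=y_i^k-y_+^k$), and the bound comes from nonexpansiveness of $\prox_{\Psi}^{\gamma}$ followed by $\|u-v\|^2\le 2\|u\|^2+2\|v\|^2$. The only cosmetic difference is that the paper first passes to $\|y_i^k-y_+^k\|\le\|x_i^k-\bar x^k\|+\gamma\|z_i^k-\bar z^k\|$ and then squares, whereas you square directly; these are equivalent.
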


\begin{proof}
    By the non-expansiveness of proximal operator, we have
    \begin{equation}\label{ineq: yi_y_plus_v2}
        \|y_{i}^{k} - y^k_+\| \leq \|x_{i}^{k} - \bar x^k - \gamma \left(z_{i}^{k} - \bar z^k\right)\|\leq \|x_{i}^{k} - \bar x^k\| + \gamma\|z_{i}^{k} - \bar z^k\|.
    \end{equation}
    Hence we know the consensus error of $y$ can be bounded
    \begin{align}
        &\frac{1}{n}\|\mY_k - \bar \mY^k\|^2 =\avein \|y_{i}^{k} - \bar y^k\|^2 = \avein \|y_{i}^{k} - y^k_+ + \avejn(y^k_+ - y_{j}^{k})\|^2 \notag\\
        =&\avein\|y_{i}^{k} - y^k_+\|^2 - \|\avejn\left(y_j^{k} - y_+^{k}\right)\|^2 \leq \avein\|y_{i}^{k} - y^k_+\|^2\notag\\
        \leq &\frac{2}{n} \left\{\|\mX_{k} - \bar \mX_k\|^2 + \gamma^2\|\mZ_{k} - \bar \mZ_k\|^2 \right\} \label{ineq: y_consensus}
    \end{align}
    where the third equality uses the fact that
    \[
        \avein\left\|v_i - \left(\avejn v_j\right)\right\|^2 = \avein\left\|v_i\right\|^2 - \left\|\avejn v_j\right\|^2
    \]
    for any vectors $v_i\ (1\leq i\leq n)$.
\end{proof}

The following technical lemma explicitly characterizes the consensus error.

\begin{lemma}[Conensus Error of Algorithm \ref{algo: Prox-DASA}: \texttt{Prox-DASA}]\label{lem: consensus}
Suppose Assumptions \ref{aspt:gossipMatrix}, \ref{aspt: Unbiasness}, \ref{aspt: independence}, \ref{aspt: Bounded Variance}, and \ref{aspt: Gradient heterogeneity} hold.  Let $\varrho(m) = \frac{(1+\rho^{2m}) \rho^{2m}}{(1-\rho^{2m})^2}$, and $\rho, m$ and $\alpha_k$ satisfy
    \begin{equation}\label{ineq: alpha_m_condition}
       \varrho(m)\alpha_k^2\leq \min\left\{\frac{1}{8}, \frac{1}{24L_{\nabla F}^2\gamma^2}\right\},\ 0 = \alpha_{-1}\leq\alpha_{k+1}\leq \alpha_k\leq 1
    \end{equation}
    for any $k\geq 0$.
    % \begin{equation}\label{ineq: alpha_m_condition}
    %     m\geq \max\left\{\frac{\log 11}{2(1-\rho)}, \frac{\log (2n + 3)}{2(1-\rho)}\right\}, 0 = \alpha_{-1}\leq\alpha_{k+1}\leq \alpha_k\leq \min\left\{1, \frac{\gamma^{-1}}{8L_{\nabla F}}\right\}
    % \end{equation}
    Then in Algorithm \ref{algo: Prox-DASA} for any $p\geq 0$, we have
    \begin{align*}
        \sum_{k=0}^{K}\frac{\alpha_k^p}{n}\E\left[\|\mX_k - \bar \mX_k\|^2\right]\leq 4\gamma^2(\sigma^2 + 3L_{\nabla F}^2\nu^2)\varrho(m)\sum_{k=0}^{K}\alpha_k^{p+2},\notag \\
        \sum_{k=0}^{K}\frac{\alpha_k^p}{n}\E\left[\|\mZ_k - \bar \mZ_k\|^2\right]\leq 4(\sigma^2 + 3L_{\nabla F}^2\nu^2)\varrho(m)\sum_{k=0}^{K}\alpha_k^{p+2},\notag.
    \end{align*}
\end{lemma}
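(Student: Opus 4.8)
\textbf{Proof plan for Lemma~\ref{lem: consensus}.}
The plan is to derive a linear recursion for the consensus errors of the primal iterates $\mX_k$ and the dual iterates $\mZ_k$, then apply Lemma~\ref{lem: cons_decay} with weight sequence $\tau_k = \alpha_k^p$ to sum the recursion. First I would write the communication step in centered form: since $\mW$ is doubly stochastic, $\bar{\mX}_{k+1} = \bar{\mX}_k (\text{avg})$ is unchanged under $\mW^m$, so $\mX_{k+1} - \bar{\mX}_{k+1} = (\tilde{\mX}_{k+1} - \bar{\tilde{\mX}}_{k+1})\mW^m$ and by Lemma~\ref{lem: F_norm_ineq} together with Lemma~\ref{lem: W_m} we get $\norm{\mX_{k+1} - \bar{\mX}_{k+1}} \le \rho^m \norm{\tilde{\mX}_{k+1} - \bar{\tilde{\mX}}_{k+1}}$. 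Substituting $\tilde{\mX}_{k+1} = (1-\alpha_k)\mX_k + \alpha_k \mY_k$ and using convexity of $\norm{\cdot}^2$ (or Young's inequality with a carefully chosen split near $\rho^{2m}$), I expect a bound of the shape
\[
\norm{\mX_{k+1} - \bar{\mX}_{k+1}}^2 \le \rho^{2m}(1+c_k)\,(1-\alpha_k)^2 \norm{\mX_k - \bar{\mX}_k}^2 + \rho^{2m}(1+c_k^{-1})\alpha_k^2 \norm{\mY_k - \bar{\mY}_k}^2,
\]
and symmetrically for $\mZ$, with $\tilde{\mZ}_{k+1} = (1-\alpha_k)\mZ_k + \alpha_k \mV_{k+1}$ so that the $\mV$-term appears in place of $\mY$.

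Next I would control the two ``input'' consensus errors $\norm{\mY_k - \bar{\mY}_k}^2$ and $\norm{\mV_{k+1} - \bar{\mV}_{k+1}}^2$. For $\mY$, Lemma~\ref{lem: consensus_y} already gives $\frac1n\norm{\mY_k - \bar{\mY}_k}^2 \le \frac2n(\norm{\mX_k - \bar{\mX}_k}^2 + \gamma^2\norm{\mZ_k - \bar{\mZ}_k}^2)$, which feeds the $\mX$-recursion back into the pair $(\mX,\mZ)$. For $\mV_{k+1}$, I would decompose $v_i^{k+1} - \bar{v}^{k+1} = (v_i^{k+1} - \nabla F_i(x_i^k)) - \frac1n\sum_j(v_j^{k+1}-\nabla F_j(x_j^k)) + (\nabla F_i(x_i^k) - \overline{\nabla F(x^k)})$; taking conditional expectation given $\setF_k$, the cross term vanishes by Assumption~\ref{aspt: independence}, the martingale part is bounded by $\sigma^2$ per agent via Assumption~\ref{aspt: Bounded Variance}, and the deterministic part $\nabla F_i(x_i^k) - \frac1n\sum_j \nabla F_j(x_j^k)$ I would bound by adding and subtracting $\nabla F(x_i^k)$ and $\nabla F(\bar x^k)$: the heterogeneity piece contributes $\nu$ (Assumption~\ref{aspt: Gradient heterogeneity}) and the Lipschitz pieces contribute $L_{\nabla F}\norm{x_i^k - \bar x^k}$, i.e. the $\mX$-consensus error again. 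So $\frac1n\E[\norm{\mV_{k+1}-\bar{\mV}_{k+1}}^2 \mid \setF_k] \lesssim \sigma^2 + L_{\nabla F}^2\nu^2 + L_{\nabla F}^2\frac1n\norm{\mX_k-\bar{\mX}_k}^2$ — note the constant $3$ in the statement comes from a $(a+b+c)^2 \le 3(a^2+b^2+c^2)$ split.

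Then I would combine: define the scalar sequence $a_k = \frac1n\E[\norm{\mX_k - \bar{\mX}_k}^2 + \gamma^2\norm{\mZ_k - \bar{\mZ}_k}^2]$. Plugging the input bounds into the two recursions and adding, the condition \eqref{ineq: alpha_m_condition} — namely $\varrho(m)\alpha_k^2 \le \min\{1/8, 1/(24 L_{\nabla F}^2\gamma^2)\}$ — is exactly what is needed to absorb the self-feedback terms (the $\alpha_k^2\norm{\mX_k-\bar{\mX}_k}^2$ contributions coming through $\mY$ and $\mV$, and the $L_{\nabla F}^2$ factors) and to collapse the contraction factor to something like $r_k \le \rho^{2m}(1 + O(\rho^{2m}))/(1-\rho^{2m})$-type constant that is uniformly $<1$ after the algebra, with the leftover additive term being $\lesssim \gamma^2(\sigma^2 + 3L_{\nabla F}^2\nu^2)\alpha_k^2$; here is where $\varrho(m) = \frac{(1+\rho^{2m})\rho^{2m}}{(1-\rho^{2m})^2}$ enters as the natural constant that tracks both the contraction and the geometric-series prefactor. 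I would then set $a_{k+1} \le r\, a_k + b_k$ with $b_k \lesssim \gamma^2(\sigma^2+3L_{\nabla F}^2\nu^2)\varrho(m)\,$-type bound scaled by $\alpha_k^2$, and apply Lemma~\ref{lem: cons_decay} with $\tau_k = \alpha_k^p$ (legitimate since $\alpha_k$ is nonincreasing and in $[0,1]$), noting $a_0 = 0$ by the $\mathbf 0$-initialization, to get $\sum_{k=0}^K \alpha_k^p a_k \le \frac{1}{1-r}\sum_{k=0}^K \alpha_k^p b_k \lesssim \gamma^2(\sigma^2+3L_{\nabla F}^2\nu^2)\varrho(m)\sum_{k=0}^K\alpha_k^{p+2}$, which gives both claimed inequalities (the $\mZ$ one after dividing through by $\gamma^2$).

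\textbf{Main obstacle.} The delicate part is the bookkeeping in the combination step: choosing the Young's-inequality split parameter $c_k$ in the $\rho^{2m}$ expansion so that the surviving contraction constant is bounded away from $1$ \emph{uniformly in $k$} while the additive term stays proportional to $\varrho(m)\alpha_k^2$, and simultaneously ensuring the self-feedback terms (which scale like $\alpha_k^2$ times the consensus error itself, with $L_{\nabla F}^2\gamma^2$ prefactors from $\mY$ and $L_{\nabla F}^2$ from $\mV$) can be moved to the left-hand side — this is precisely what forces the two-part threshold in \eqref{ineq: alpha_m_condition}. Getting the constants to line up to the clean $4\gamma^2(\sigma^2+3L_{\nabla F}^2\nu^2)\varrho(m)$ (respectively $4(\sigma^2+3L_{\nabla F}^2\nu^2)\varrho(m)$) requires care but no new ideas.
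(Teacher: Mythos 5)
Your plan matches the paper's proof in every substantive ingredient: the centered recursion through $\mW^m$ with Lemma~\ref{lem: F_norm_ineq}/\ref{lem: W_m}, the Young split with parameter $\tfrac{1-\rho^{2m}}{2\rho^{2m}}$ giving contraction factor $\tfrac{1+\rho^{2m}}{2}$, Lemma~\ref{lem: consensus_y} to feed $\mY_k$ back into $(\mX_k,\mZ_k)$, the three-way decomposition of $\mV_{k+1}-\bar\mV_{k+1}$ (martingale noise killed in cross terms by Assumption~\ref{aspt: independence} and bounded by $\sigma^2$, plus heterogeneity $\nu$, plus Lipschitz terms producing the factor $3$ and the $6L_{\nabla F}^2\norm{\mX_k-\bar\mX_k}^2$ feedback), the use of Lemma~\ref{lem: cons_decay} with $\tau_k=\alpha_k^p/n$, and the zero initialization.

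The one place your plan deviates — collapsing everything into a single scalar sequence $a_k=\tfrac1n\E[\norm{\mX_k-\bar\mX_k}^2+\gamma^2\norm{\mZ_k-\bar\mZ_k}^2]$ and applying Lemma~\ref{lem: cons_decay} once — does not close under the stated thresholds. Under \eqref{ineq: alpha_m_condition} the two per-iteration recursions become $\E\norm{\mX_{k+1}-\bar\mX_{k+1}}^2\le\tfrac{3+\rho^{2m}}{4}\E\norm{\mX_k-\bar\mX_k}^2+\tfrac{1-\rho^{2m}}{4}\gamma^2\E\norm{\mZ_k-\bar\mZ_k}^2$ and $\gamma^2\E\norm{\mZ_{k+1}-\bar\mZ_{k+1}}^2\le\tfrac{1+\rho^{2m}}{2}\gamma^2\E\norm{\mZ_k-\bar\mZ_k}^2+\tfrac{1-\rho^{2m}}{4}\E\norm{\mX_k-\bar\mX_k}^2+(\text{noise})$; adding them, the coefficient multiplying $\E\norm{\mX_k-\bar\mX_k}^2$ is $\tfrac{3+\rho^{2m}}{4}+\tfrac{1-\rho^{2m}}{4}=1$ exactly, so there is no contraction and Lemma~\ref{lem: cons_decay} gives nothing. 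A weighted combination $\norm{\mX_k-\bar\mX_k}^2+c\gamma^2\norm{\mZ_k-\bar\mZ_k}^2$ with $c\in(1/2,1)$ does contract, but yields a constant larger than the stated $4$. The paper sidesteps this by applying Lemma~\ref{lem: cons_decay} to each recursion \emph{separately}: the $\mX$-recursion yields the exact sum-level identity $\sum_k\tau_k\E\norm{\mX_k-\bar\mX_k}^2\le\gamma^2\sum_k\tau_k\E\norm{\mZ_k-\bar\mZ_k}^2$ (the $\tfrac{1}{1-r}$ factor cancels the $\tfrac{1-\rho^{2m}}{4}$ prefactor), the $\mZ$-recursion yields $\sum_k\tau_k\E\norm{\mZ_k-\bar\mZ_k}^2\le 2\varrho(m)\sum_k\tau_k\alpha_k^2\E\norm{\mV_{k+1}-\bar\mV_{k+1}}^2$, and substituting the $\mV$ bound produces a self-bounding inequality on $\sum_k\tau_k\E\norm{\mZ_k-\bar\mZ_k}^2$ whose feedback coefficient $12\varrho(m)\alpha_k^2L_{\nabla F}^2\gamma^2\le\tfrac12$ is absorbed, giving the clean constant $4$. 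You should reorganize the final step this way (or accept strictly stronger step-size thresholds); everything else in your outline is sound.
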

\begin{proof}
By Assumption \ref{aspt:gossipMatrix}, the iterates in Algorithm \ref{algo: Prox-DASA} satisfy
\begin{equation}\label{eq: xd2}
    \begin{aligned}
       &\mX_{k+1} = (1-\alpha_k)\mX_k \mW^{m} + \alpha_k\mY_k \mW^{m},\ \bar x^{k+1} = (1-\alpha_k)\bar x^k + \alpha_k \bar y^k, \\ 
       &\mZ_{k+1} = (1-\alpha_k)\mZ_k \mW^{m} + \alpha_k \mV_{k+1} \mW^{m},\ \bar z^{k+1} = (1-\alpha_k)\bar z^k + \alpha_k \bar v^{k+1}. 
    \end{aligned}
\end{equation}
Hence, for the consensus error of iterates $\{x^k_i\} $, we have
    \begin{align}
        &\norm{\mX_{k+1} - \bar \mX_{k+1}}^2 \notag \\
        = &\left\|\bigg((1-\alpha_k)\left(\mX_k - \bar \mX_k\right) + \alpha_k\left(\mY_k - \bar \mY_k\right)\bigg)\left(\mW^m - \frac{\bfone\bfonet}{n}\right)\right\|^2 \notag\\
        \leq &\left\{\left(1 + \frac{1-\rho^{2m}}{2\rho^{2m}}\right)(1-\alpha_k)^2\norm{\mX_k - \bar \mX_k}^2 + \left(1 + \frac{2\rho^{2m}}{1-\rho^{2m}}\right)\alpha_k^2\norm{\mY_k - \bar \mY_k}^2\right\} \rho^{2m} \notag \\
        \leq &\frac{(1+\rho^{2m})}{2}\norm{\mX_k - \bar \mX_k}^2 + \frac{(1+\rho^{2m})\rho^{2m}}{1-\rho^{2m}}\alpha_k^2\norm{\mY_k - \bar \mY_k}^2, \label{ineq: x_consensus}
    \end{align}
    where the first inequality uses Lemma \ref{lem: F_norm_ineq} and \ref{lem: W_m}.  Combining \eqref{ineq: alpha_m_condition}, \eqref{ineq: x_consensus}, and Lemma \ref{lem: consensus_y}, we have
    \begin{align*}
        \E\left[\|\mX_{k+1} - \bar \mX_{k+1}\|^2\right]&\leq \frac{(1+\rho^{2m})}{2}\E\left[\|\mX_k - \bar \mX_k\|^2\right] + \frac{(1-\rho^{2m})}{4}\E\left[\|\mX_{k} - \bar \mX_k\|^2 + \gamma^2\|\mZ_{k} - \bar \mZ_k\|^2\right] \\
        &= \frac{(3 + \rho^{2m})}{4}\E\left[\|\mX_k - \bar \mX_k\|^2\right] + \frac{(1-\rho^{2m})\gamma^2}{4}\E\left[\|\mZ_{k} - \bar \mZ_k\|^2\right]
    \end{align*}
    Using Lemma \ref{lem: cons_decay} in the above inequality with $\tau_k = \frac{\alpha_k^p}{n}$ for any fixed $p\geq 0$ we know
    \begin{equation}\label{ineq: x_z_cons}
        \sum_{k=0}^{K}\frac{\alpha_k^p}{n}\E\left[\|\mX_k - \bar \mX_k\|^2\right]\leq \sum_{k=0}^{K}\frac{\gamma^2\alpha_k^p}{n}\E\left[\|\mZ_k - \bar \mZ_k\|^2\right].
    \end{equation}
    Similarly to \eqref{ineq: x_consensus}, we can obtain the following results on the consensus error of dual variables $\{z^k_i\}$:
    \begin{equation}\label{ineq: z_consensus}
        \norm{\mZ_{k+1} - \bar \mZ_{k+1}}^2 \leq \frac{(1+\rho^{2m})}{2}\norm{\mZ_k - \bar \mZ_k}^2 + \frac{(1+\rho^{2m})\rho^{2m}}{1-\rho^{2m}}\alpha_k^2\norm{\mV_{k+1} - \bar \mV_{k+1}}^2,
    \end{equation}
    Using \eqref{ineq: alpha_m_condition} and Lemma \ref{lem: cons_decay} in \eqref{ineq: z_consensus} with $\tau_k = \frac{\alpha_k^p}{n}$, we have
    \begin{align}
        \sum_{k=0}^{K}\frac{\alpha_k^p}{n}\E\left[\|\mZ_k - \bar \mZ_k\|^2\right]\leq 2\varrho(m)\sum_{k=0}^{K}\frac{\alpha_k^{p+2}}{n}\E\left[\|\mV_{k+1} - \bar \mV_{k+1}\|^2\right].\label{ineq: z_cons_v}
    \end{align}
    To bound $\|\mV_{k+1} - \bar \mV_{k+1}\|$ we first notice that
    \begin{align*}
        v_{i}^{k+1} - \bar v^{k+1}=&v_i^{k+1} - \E\left[v_i^{k+1}|\setF_k\right] - \avejn(v_{j}^{k+1} - \E\left[v_j^{k+1}|\setF_k\right]) \\
        +&\E\left[v_i^{k+1}|\setF_k\right] - \nabla F_i(\bar x^k) + \nabla F_i(\bar x^k) - \nabla F(\bar x^k) + \nabla F(\bar x^k) - \avejn\E\left[v_j^{k+1}|\setF_k\right] \\
        =&\left(1-\frac{1}{n}\right)(v_i^{k+1} - \E\left[v_i^{k+1}|\setF_k\right]) - \frac{1}{n}\sum_{j\neq i}(v_{j}^{k+1} - \E\left[v_j^{k+1}|\setF_k\right])\\
        + &\left(1-\frac{1}{n}\right)\left(\nabla F_i(x_i^{k}) - \nabla F_i(\bar x^k)\right) + \nabla F_i(\bar x^k) - \nabla F(\bar x^k) + \frac{1}{n}\sum_{j\neq i}\left(\nabla F_j(\bar x^k) - \nabla F_i(x_j^{k})\right)
    \end{align*}
    which gives
    \begin{align*}
        &\E\left[\|v_{i}^{k+1} - \bar v^{k+1}\|^2\right]\\
        = &\left(1-\frac{1}{n}\right)^2\E\left[\|v_i^{k+1} - \E\left[v_i^{k+1}|\setF_k\right]\|^2\right] + \frac{1}{n^2}\sum_{j\neq i}^{n}\E\left[\|v_{j}^{k+1} - \E\left[v_j^{k+1}|\setF_k\right]\|^2\right] \\
        + & \left\|\left(1-\frac{1}{n}\right)\left(\nabla F_i(x_i^{k}) - \nabla F_i(\bar x^k)\right) + \nabla F_i(\bar x^k) - \nabla F(\bar x^k) + \frac{1}{n}\sum_{j\neq i}\left(\nabla F_j(\bar x^k) - \nabla F_i(x_j^{k})\right)\right\|^2 \\
        \leq &\sigma^2 + 3L_{\nabla F}^2\left(\left(1-\frac{1}{n}\right)^2\|x_i^{k} - \bar x^{k}\|^2 + \nu^2 + \frac{1}{n}\sum_{j\neq i}\|x_j^k - \bar x^k\|^2\right),
    \end{align*}
    where the first equality uses Assumption \ref{aspt: independence}, and the second inequality uses Cauchy-Schwarz inequality, Assumptions \ref{aspt:lipschitz-gradient}, \ref{aspt: Bounded Variance}, and \ref{aspt: Gradient heterogeneity}. Hence we have
    \begin{equation}\label{ineq: V_cons}
        \E\left[\|\mV_{k+1} - \bar \mV_{k+1}\|^2\right] \leq 6L_{\nabla F}^2\E\left[\|\mX_k - \bar \mX_k\|^2\right] + n\sigma^2 + 3nL_{\nabla F}^2\nu^2.
    \end{equation}
    Combining \eqref{ineq: z_cons_v} and \eqref{ineq: V_cons}, we have
    \begin{equation}\label{ineq: z_x_cons}
        \begin{aligned}
            \sum_{k=0}^{K}\frac{\alpha_k^p}{n}\E\left[\|\mZ_k - \bar \mZ_k\|^2\right] \leq &2\varrho(m)\sum_{k=0}^{K} \left\{\frac{6L_{\nabla F}^2 \alpha_k^{p+2}}{n}\E\left[\|\mX_k - \bar \mX_k\|^2\right] + (\sigma^2 + 3L_{\nabla F}^2\nu^2)\sum_{k=0}^{K}\alpha_k^{p+2}\right\} \\
            \leq &\sum_{k=0}^{K}\left\{12 \varrho(m)\alpha_k^2 L_{\nabla F}^2 \gamma^2\right\}\frac{\alpha_k^{p}}{n\gamma^2}\E\left[\|\mX_k - \bar \mX_k\|^2\right] + 2(\sigma^2 + 3L_{\nabla F}^2\nu^2)\varrho(m)\sum_{k=0}^{K}\alpha_k^{p+2} \\
            \leq &\sum_{k=0}^{K}\frac{\alpha_k^p}{2n}\E\left[\|\mZ_k - \bar \mZ_k\|^2\right] + 2(\sigma^2 + 3L_{\nabla F}^2\nu^2)\varrho(m)\sum_{k=0}^{K}\alpha_k^{p+2},
        \end{aligned}
    \end{equation}
    % \begin{equation}\label{ineq: z_x_cons}
    %     \begin{aligned}
    %         \sum_{k=0}^{K}\frac{\alpha_k^p}{n}\E\left[\|\mZ_k - \bar \mZ_k\|^2\right]&\leq \sum_{k=0}^{K}\frac{6L_{\nabla F}^2\alpha_k^{p+2}}{n^2}\E\left[\|\mX_k - \bar \mX_k\|^2\right] + \frac{\sigma^2 + 3L_{\nabla F}^2\nu^2}{n}\sum_{k=0}^{K}\alpha_k^{p+2} \\
    %         &\leq \sum_{k=0}^{K}\frac{\alpha_k^p}{2\gamma^2n}\E\left[\|\mX_k - \bar \mX_k\|^2\right] + \frac{\sigma^2 + 3L_{\nabla F}^2\nu^2}{n}\sum_{k=0}^{K}\alpha_k^{p+2},
    %     \end{aligned}
    % \end{equation}
    where the second inequality uses \eqref{ineq: alpha_m_condition}. By \eqref{ineq: x_z_cons} and \eqref{ineq: z_x_cons} we can finally obtain that
    \begin{align}
        \sum_{k=0}^{K}\frac{\alpha_k^p}{n}\E\left[\|\mX_k - \bar \mX_k\|^2\right]\leq 4\gamma^2(\sigma^2 + 3L_{\nabla F}^2\nu^2)\varrho(m)\sum_{k=0}^{K}\alpha_k^{p+2},\label{ineq: x_cons_final}, \\
        \sum_{k=0}^{K}\frac{\alpha_k^p}{n}\E\left[\|\mZ_k - \bar \mZ_k\|^2\right]\leq 4(\sigma^2 + 3L_{\nabla F}^2\nu^2)\varrho(m)\sum_{k=0}^{K}\alpha_k^{p+2},\label{ineq: z_cons_final}.
    \end{align}
\end{proof}

\begin{lemma}[Conensus Error of Algorithm \ref{algo: Prox-DASA-GT}: \texttt{Prox-DASA-GT}]\label{lem: consensus_gt}
    Suppose Assumptions \ref{aspt:gossipMatrix}, \ref{aspt: Unbiasness}, \ref{aspt: Bounded Variance} and \ref{aspt: independence} hold. Let $\varrho(m) = \frac{(1+\rho^{2m}) \rho^{2m}}{(1-\rho^{2m})^2}$, and $\rho, m$ and $\alpha_k$ satisfy
    \begin{equation}\label{ineq: alpha_m_condition_gt}
        \varrho(m)\alpha_k^2\leq \frac{1}{8},\ \varrho(m)\alpha_k\leq \frac{1}{9L_{\nabla F}\gamma},\ 0 = \alpha_{-1}\leq\alpha_{k+1}\leq \alpha_k\leq 1
    \end{equation}
    for any $k\geq 0$, and the initialization satisfies $u_i^0 = v_i^0= 0$ for all $i$. Then in Algorithm \ref{algo: Prox-DASA-GT} for any $p\geq 0$ we have
    \begin{align*}
        &\sum_{k=0}^{K}\frac{\alpha_k^p}{n}\E\left[\|\mX_k - \bar \mX_k\|^2\right]\leq 40 \gamma^2 \varrho(m)^2 \sum_{k=0}^{K}\alpha_k^{p+2}\left\{L_{\nabla F}^2 \alpha_k^2\E\left[\|\bar x^k - \bar y^k\|^2\right] +2\sigma^2\right\},\\
        &\sum_{k=0}^{K}\frac{\alpha_k^p}{n}\E\left[\|\mZ_k - \bar \mZ_k\|^2\right]\leq 40 \varrho(m)^2 \sum_{k=0}^{K}\alpha_k^{p+2}\left\{L_{\nabla F}^2 \alpha_k^2\E\left[\|\bar x^k - \bar y^k\|^2\right] +2\sigma^2\right\}.
    \end{align*}
\end{lemma}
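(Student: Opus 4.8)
The argument follows the template of Lemma~\ref{lem: consensus}, but it must now track \emph{three} coupled consensus errors --- those of $\mX_k$, of the gradient--tracking iterates $\mU_k$, and of $\mZ_k$ --- and it replaces the heterogeneity--dependent bound on $\E\|\mV_{k+1}-\bar\mV_{k+1}\|^2$ used there by a bound on the gradient \emph{increment} $\E\|\mV_{k+1}-\mV_k\|^2$, which is what the tracking step $u_i^{k+1}\leftarrow u_i^k+v_i^{k+1}-v_i^k$ actually feeds into the consensus dynamics; this substitution is exactly why Assumption~\ref{aspt: Gradient heterogeneity} is no longer required. Using Assumption~\ref{aspt:gossipMatrix} together with Lemma~\ref{lem: F_norm_ineq} and Lemma~\ref{lem: W_m} and the Young split used in \eqref{ineq: x_consensus}, set $r:=\tfrac{1+\rho^{2m}}{2}<1$ and $c:=\tfrac{(1+\rho^{2m})\rho^{2m}}{1-\rho^{2m}}=(1-\rho^{2m})\varrho(m)$, so $\tfrac{c}{1-r}=2\varrho(m)$. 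Since $u_i^0=v_i^0=\mathbf 0$ and $\mW\bfone=\bfone$ force $\bar u^k=\bar v^k$ for all $k$, the centered pre--mixing quantity $\mU_k+\mV_{k+1}-\mV_k$ equals $(\mU_k-\bar\mU_k)+\mathbf d_{k+1}$ with $\mathbf d_{k+1}:=(\mV_{k+1}-\mV_k)-(\bar\mV_{k+1}-\bar\mV_k)$ a \emph{centered gradient difference}, and one obtains the recursions $\|\mX_{k+1}-\bar\mX_{k+1}\|^2\le r\|\mX_k-\bar\mX_k\|^2+c\alpha_k^2\|\mY_k-\bar\mY_k\|^2$, $\|\mU_{k+1}-\bar\mU_{k+1}\|^2\le r\|\mU_k-\bar\mU_k\|^2+c\|\mathbf d_{k+1}\|^2$, and $\|\mZ_{k+1}-\bar\mZ_{k+1}\|^2\le r\|\mZ_k-\bar\mZ_k\|^2+2c\alpha_k^2(\|\mU_k-\bar\mU_k\|^2+\|\mathbf d_{k+1}\|^2)$.

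\textbf{Bounding the forcing term.} Split $v_i^{k+1}-v_i^k$ into the stochastic noise at step $k+1$ (mean zero given $\setF_k$), the stochastic noise at step $k$ (mean zero given $\setF_{k-1}$), and the deterministic difference $\nabla F_i(x_i^k)-\nabla F_i(x_i^{k-1})$ ($\setF_{k-1}$--measurable). Assumptions~\ref{aspt: Unbiasness}, \ref{aspt: independence} and \ref{aspt: Bounded Variance} kill the three cross terms in expectation and bound the noise by $2\sigma_i^2$, while Assumption~\ref{aspt:lipschitz-gradient} gives $\|\nabla F_i(x_i^k)-\nabla F_i(x_i^{k-1})\|\le L_{\nabla F}\|x_i^k-x_i^{k-1}\|$; summing over $i$,
\[
\E\|\mathbf d_{k+1}\|^2\ \le\ \E\|\mV_{k+1}-\mV_k\|^2\ \le\ 2n\sigma^2+L_{\nabla F}^2\,\E\|\mX_k-\mX_{k-1}\|^2\qquad(k\ge1),
\]
the $k=0$ term being controlled directly from $\mX_0=\bar\mX_0=\mathbf 0$. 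Then writing $\mX_k-\mX_{k-1}=\alpha_{k-1}(\mY_{k-1}-\mX_{k-1})\mW^m+(\mX_{k-1}-\bar\mX_{k-1})(\mW^m-\mathbf I)$ and decomposing $\|\mY_{k-1}-\mX_{k-1}\|^2$ into $\|\mX_{k-1}-\bar\mX_{k-1}\|^2$, $\gamma^2\|\mZ_{k-1}-\bar\mZ_{k-1}\|^2$ and $n\|\bar x^{k-1}-\bar y^{k-1}\|^2$ via Lemma~\ref{lem: consensus_y}, one sees that $\E\|\mathbf d_{k+1}\|^2$ is dominated by $n\sigma^2$, plus $L_{\nabla F}^2$ times earlier $\mX$-- and $\mZ$--consensus errors, plus $nL_{\nabla F}^2\alpha_{k-1}^2\E\|\bar x^{k-1}-\bar y^{k-1}\|^2$.

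\textbf{Chaining and absorption.} Apply the summation--by--parts Lemma~\ref{lem: cons_decay} to each of the three recursions with weights $\tau_k=\alpha_k^p$ (and with $\tau_k=\alpha_k^{p+2}$ for the nested estimate of $\sum_k\alpha_k^p\|\mU_k-\bar\mU_k\|^2$), using $0=\alpha_{-1}\le\alpha_{k+1}\le\alpha_k\le1$ and the initializations $\mX_0=\bar\mX_0=\mathbf 0$, $\mU_0=\bar\mU_0=\mathbf 0$ to annihilate the boundary terms. Substituting the bounds for $\sum_k\alpha_k^p\|\mX_k-\bar\mX_k\|^2$ and $\sum_k\alpha_k^p\|\mU_k-\bar\mU_k\|^2$ into the one for $\sum_k\alpha_k^p\|\mZ_k-\bar\mZ_k\|^2$, and inserting the forcing--term estimate, produces a single inequality for $\sum_k\alpha_k^p\|\mZ_k-\bar\mZ_k\|^2$ in terms of itself --- with extra prefactors $\varrho(m)$, $L_{\nabla F}^2\gamma^2\varrho(m)^2$ and higher powers $\alpha_k^{p+2},\alpha_k^{p+4}$ --- plus the genuine terms $n\sigma^2\sum_k\alpha_k^{p+2}$ and $nL_{\nabla F}^2\sum_k\alpha_k^{p+4}\E\|\bar x^k-\bar y^k\|^2$. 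The two hypotheses in \eqref{ineq: alpha_m_condition_gt} are precisely what force every self--referential coefficient to be at most $\tfrac12$: $\varrho(m)\alpha_k^2\le\tfrac18$ absorbs the $\mX$--self--term and the $\alpha_k^2$--type cross terms, and $\varrho(m)\alpha_k\le\tfrac{1}{9L_{\nabla F}\gamma}$ kills the $L_{\nabla F}^2\gamma^2\varrho(m)^2\alpha_k^4$--type cross terms; moving them to the left gives the $\mZ$--bound with prefactor $O(\varrho(m)^2)$ (the explicit constant $40$ coming from the bookkeeping), and the $\mX$--bound follows from $\sum_k\alpha_k^p\|\mX_k-\bar\mX_k\|^2\le 8\varrho(m)\gamma^2\sum_k\alpha_k^{p+2}\|\mZ_k-\bar\mZ_k\|^2$, which accounts for its extra $\gamma^2$.

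\textbf{Main obstacle.} The genuine difficulty is the circular coupling: $\mZ$'s consensus error is driven by $\mU$'s, which is driven by the gradient increment $\mV_{k+1}-\mV_k$, which is driven by the iterate movement $\mX_k-\mX_{k-1}$, which is itself driven by $\mX$'s and $\mZ$'s consensus errors (and by $\bar x^k-\bar y^k$). Breaking the loop hinges on choosing the weight exponents so that every term eventually lands on $\sum_k\alpha_k^{p+2}$ or $\sum_k\alpha_k^{p+4}\E\|\bar x^k-\bar y^k\|^2$; in particular one must check that the non--contracting $\|\mX_{k-1}-\bar\mX_{k-1}\|^2$ piece coming from the $(\mW^m-\mathbf I)$ factor in $\mX_k-\mX_{k-1}$ still acquires an $\alpha_k^2$ after passing through the $\alpha_k^2$--prefactor of the $\mZ$--recursion, and that the $\bar x^k-\bar y^k$ contribution picks up the extra $\alpha_k^2$ visible in the statement --- all the while keeping separate track of which of the two smallness conditions on $\alpha_k$ is responsible for absorbing which family of cross terms.
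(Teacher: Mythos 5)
Your proposal is correct and follows essentially the same route as the paper: the same three coupled consensus recursions for $\mX$, $\mU$, $\mZ$, the same application of Lemma~\ref{lem: cons_decay} with weights $\alpha_k^p$, the same bound on the gradient increment $\E\|\mV_{k+1}-\mV_k\|^2$ by $2n\sigma^2$ plus $L_{\nabla F}^2$ times earlier consensus errors plus $nL_{\nabla F}^2\alpha_{k-1}^2\E\|\bar x^{k-1}-\bar y^{k-1}\|^2$, and the same self-referential absorption in which $\varrho(m)\alpha_k^2\le\tfrac18$ and $\varrho(m)\alpha_k\le\tfrac{1}{9L_{\nabla F}\gamma}$ each kill their respective family of cross terms. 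The only cosmetic difference is that you bound $\|\nabla F_i(x_i^k)-\nabla F_i(x_i^{k-1})\|$ via the raw increment $\mX_k-\mX_{k-1}$ (hence your worry about the non-contracting $\mW^m-\mI$ factor, which is indeed harmless), whereas the paper routes through the average iterate via the five-way split $x_i^k\to\bar x^k\to\bar x^{k-1}\to x_i^{k-1}$, for which $\bar x^k-\bar x^{k-1}=\alpha_{k-1}(\bar y^{k-1}-\bar x^{k-1})$ holds exactly with no mixing-matrix term.
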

\begin{proof}
    The updates in Algorithm \ref{algo: Prox-DASA-GT} take the form:
    \begin{equation}\label{eq: xd2_gt}
        \begin{aligned}
           &\mX_{k+1} = (1-\alpha_k)\mX_k \mW^{m} + \alpha_k\mY_k \mW^{m},\ \bar x^{k+1} = (1-\alpha_k)\bar x^k + \alpha_k \bar y^k, \\ 
           &\mU_{k+1} = \mU_k \mW^{m} + (\mV_{k+1} - \mV_k) \mW^{m},\ \bar u^{k+1} = \bar u^k + \bar v^{k+1} - \bar v^k, \\
           &\mZ_{k+1} = (1-\alpha_k)\mZ_k \mW^{m} + \alpha_k \mU_{k} \mW^{m},\ \bar z^{k+1} = (1-\alpha_k)\bar z^k + \alpha_k \bar u^{k}. 
        \end{aligned}
    \end{equation}
    Setting $u_i^0 = v_i^0$, we can prove by induction that $\bar u^k = \bar v^k$. To analyze the consensus error of $\mU_k$, we first notice:
    \begin{align*}
        &\mU_{k+1} - \bar \mU_{k+1} \\
        = & \left(\mU_k - \bar \mU_k + \mV_{k+1} - \mV_k - \bar \mV^{k+1} + \bar \mV^k\right)\left(\mW^m - \frac{\bfone\bfonet}{n}\right)\\
        = &\left(\mU_k - \bar \mU_k + \left(\mV_{k+1} - \mV_k\right)\left(\mI - \frac{\bfone\bfonet}{n}\right)\right)\left(\mW^m - \frac{\bfone\bfonet}{n}\right)
    \end{align*}
    which gives
    \begin{align*}
        &\|\mU_{k+1} - \bar \mU_{k+1}\|^2\\
        \leq &\left\{\left(1 + \frac{1-\rho^{2m}}{2\rho^{2m}}\right)\norm{\mU_k - \bar \mU_k}^2 + \left(1 + \frac{2\rho^{2m}}{1-\rho^{2m}}\right)\norm{\mV_{k+1} -  \mV_k}^2\right\} \rho^{2m} \\
        = &\frac{(1+\rho^{2m})}{2}\norm{\mU_k - \bar \mU_k}^2 + \frac{(1+\rho^{2m})\rho^{2m}}{1-\rho^{2m}}\norm{\mV_{k+1} -\mV_k}^2.
    \end{align*}
    Using Lemma \ref{lem: cons_decay}, we know for any $k\geq 0$ and $p\geq 0$,
    \begin{equation}\label{ineq: U_V_cons}
        \sum_{k=0}^{K}\alpha_k^p\|\mU_{k} - \bar \mU_{k}\|^2\leq2\varrho(m) \sum_{k=0}^{K}\alpha_k^p\norm{\mV_{k+1} -\mV_k}^2.
    \end{equation}
    Note that we also have
    \begin{align*}
        \mV_{k+1} - \mV_k = &\mV_{k+1} - \E\left[\mV_{k+1}|\setF_k\right] - \left(\mV_k - \E\left[\mV_k|\setF_{k-1}\right]\right) \\
        &+ \E\left[\mV_{k+1}|\setF_k\right] - \nabla \mF(\bar x^k) + \nabla \mF(\bar x^k) - \nabla \mF(\bar x^{k-1}) + \nabla \mF(\bar x^{k-1}) - \E\left[\mV_k|\setF_{k-1}\right]
    \end{align*}
    where we overload the notation and define $\nabla \mF(x) = [\nabla F_1(x), ..., \nabla F_n(x)]$. Hence we know
    \begin{equation}\label{ineq: delta_V}
        \begin{aligned}
            &\E\left[\|\mV_{k+1} - \mV_k\|^2\right]\\
        \leq & 5\bigg\{\E\left[\|\mV_{k+1} - \E\left[\mV_{k+1}|\setF_k\right]\|^2\right] + \E\left[\|\mV_k - \E\left[\mV_k|\setF_{k-1}\right]\|^2\right] + \E\left[\sum_{i=1}^{n}\|\nabla F_i(x_i^k) - \nabla F_i(\bar x^k)\|^2\right] \\
        + & \E\left[\sum_{i=1}^{n}\|\nabla F_i(\bar x^k) - \nabla F_i(\bar x^{k-1})\|^2\right] + \E\left[\sum_{i=1}^{n}\|\nabla F_i(x_i^{k-1}) - \nabla F_i(\bar x^{k-1})\|^2\right]\bigg\} \\
        \leq &5\left(2n\sigma^2 + L_{\nabla F}^2\E\left[\|\mX_k - \bar \mX_k\|^2 + \|\mX^{k-1} - \bar \mX^{k-1}\|^2 + n\alpha_{k-1}^2\|\bar x^{k-1} - \bar y^{k-1}\|^2\right]\right)
        \end{aligned}
    \end{equation}
    where the first inequality uses Cauchy-Schwarz inequality, and the second inequality uses Lipschitz continuity of $\nabla f_i$ and \eqref{eq: xd2_gt}. For simplicity we set $x_i^{-1} = y_i^{-1} = 0$ for all $i$ so that it is easy to check the above inequality holds for all $k\geq 0$. Using \eqref{ineq: U_V_cons} and \eqref{ineq: delta_V} we know:
    \begin{align}
        &\sum_{k=0}^{K}\frac{\alpha_k^p}{n}\|\mU_{k} - \bar \mU_{k}\|^2\\
        \leq &\frac{10\varrho(m)}{n} \sum_{k=0}^{K}\alpha_k^p\left(2n\sigma^2 + L_{\nabla F}^2\E\left[\|\mX_k - \bar \mX_k\|^2 + \|\mX^{k-1} - \bar \mX^{k-1}\|^2 + n\alpha_{k-1}^2\|\bar x^{k-1} - \bar y^{k-1}\|^2\right]\right). \notag \\
            \leq &\frac{20L_{\nabla F}^2\varrho(m)}{n}\sum_{k=0}^{K}\alpha_k^p\E\left[\|\mX_k - \bar \mX_k\|^2\right] + 10L_{\nabla F}^2\varrho(m)\sum_{k=0}^{K}\alpha_k^{p+2}\E\left[\|\bar x^k - \bar y^k\|^2\right] +20\sigma^2 \varrho(m)\sum_{k=0}^{K}\alpha_k^{p} \label{ineq: U_cons_gt},
            % \leq & \sum_{k=0}^{K}\frac{\alpha_k^{p-2}}{5\gamma^2n}\E\left[\|\mX_k - \bar \mX_k\|^2\right] + \frac{10L_{\nabla F}^2(1+\rho^{2m})\rho^{2m}}{(1-\rho^{2m})^2}\sum_{k=0}^{K}\alpha_k^{p+2}\E\left[\|\bar x^k - \bar y^k\|^2\right] + \frac{40\rho^{2m}\sigma^2}{(1-\rho^{2m})^2}\sum_{k=0}^{K}\alpha_k^{p},
    \end{align}
    where the third inequality uses \eqref{ineq: alpha_m_condition_gt}.
    % \begin{align}
    %     \sum_{k=0}^{K}\alpha_k^p\|\mU_{k} - \bar \mU_{k}\|^2&\leq \frac{20}{n^2}\sum_{k=0}^{K}\alpha_k^p\left(2n\sigma^2 + L_{\nabla F}^2\E\left[\|\mX_k - \bar \mX_k\|^2 + \|\mX^{k-1} - \bar \mX^{k-1}\|^2 + n\alpha_{k-1}^2\|\bar x^{k-1} - \bar y^{k-1}\|^2\right]\right). \notag \\
    %         &\leq \frac{40L_{\nabla F}^2}{n^2}\sum_{k=0}^{K}\alpha_k^p\E\left[\|\mX_k - \bar \mX_k\|^2\right] + \frac{20L_{\nabla F}^2}{n}\sum_{k=0}^{K}\alpha_k^{p+2}\E\left[\|\bar x^k - \bar y^k\|^2\right] + \frac{40\sigma^2}{n}\sum_{k=0}^{K}\alpha_k^{p} \label{ineq: U_cons_gt}.
    % \end{align}
    For other consensus error terms we follow the same proof in Lemma \ref{lem: consensus} to get
    \begin{align}
        &\norm{\mX_{k+1} - \bar \mX_{k+1}}^2\leq \frac{(1+\rho^{2m})}{2}\norm{\mX_k - \bar \mX_k}^2 + \frac{(1+\rho^{2m})\rho^{2m}}{1-\rho^{2m}}\alpha_k^2\norm{\mY_k - \bar \mY_k}^2, \label{ineq: X_cons_gt}\\
        &\norm{\mY_k - \bar \mY_k}^2 \leq 2(\norm{\mX_k - \bar \mX_k}^2 + \gamma^2\norm{\mZ_k - \bar \mZ_k}^2), \label{ineq: Y_cons_gt}\\
        &\norm{\mZ_{k+1} - \bar \mZ_{k+1}}^2 \leq \frac{(1+\rho^{2m})}{2}\norm{\mZ_k - \bar \mZ_k}^2 + \frac{(1+\rho^{2m})\rho^{2m}}{1-\rho^{2m}}\alpha_k^2\norm{\mU_k - \bar \mU_k}^2 \label{ineq: Z_cons_gt}.
    \end{align}
    Hence we know \eqref{ineq: x_z_cons} still holds:
    \begin{equation}\label{ineq: x_z_cons_gt}
        \sum_{k=0}^{K}\frac{\alpha_k^p}{n}\E\left[\|\mX_k - \bar \mX_k\|^2\right]\leq \sum_{k=0}^{K}\frac{\gamma^2\alpha_k^p}{n}\E\left[\|\mZ_k - \bar \mZ_k\|^2\right].
    \end{equation}
    Applying Lemma \eqref{lem: cons_decay} in \eqref{ineq: Z_cons_gt} with $\tau_k = \frac{\alpha_k^p}{n}$, we have
    \begin{equation}\label{ineq: z_u_cons_gt}
        \sum_{k=0}^{K}\frac{\alpha_k^p}{n}\E\left[\|\mZ_k - \bar \mZ_k\|^2\right]\leq 2\varrho(m)\sum_{k=0}^{K}\frac{\alpha_k^{p+2}}{n}\E\left[\|\mU_k - \bar \mU_k\|^2\right].
    \end{equation}
    The above two inequalities together with \eqref{ineq: U_cons_gt} and \eqref{ineq: alpha_m_condition_gt} imply
    \begin{align*}
        &\sum_{k=0}^{K}\frac{\alpha_k^p}{n}\E\left[\|\mX_k - \bar \mX_k\|^2\right]\leq  2\varrho(m)\gamma^2\sum_{k=0}^{K}\frac{\alpha_k^{p+2}}{n}\E\left[\|\mU_k - \bar \mU_k\|^2\right] \\
        \leq& \sum_{k=0}^{K}\left\{40L_{\nabla F}^2\gamma^2\varrho(m)^2\alpha_k^2\right\}\frac{\alpha_k^{p}}{n}\E\left[\|\mX_k - \bar \mX_k\|^2\right] + 20 \gamma^2 \varrho(m)^2 \sum_{k=0}^{K}\alpha_k^{p+2}\left\{L_{\nabla F}^2 \alpha_k^2\E\left[\|\bar x^k - \bar y^k\|^2\right] +2\sigma^2\right\}\\
        \leq&\frac{1}{2}\sum_{k=0}^{K}\frac{\alpha_k^{p}}{n}\E\left[\|\mX_k - \bar \mX_k\|^2\right] + 20 \gamma^2 \varrho(m)^2 \sum_{k=0}^{K}\alpha_k^{p+2}\left\{L_{\nabla F}^2 \alpha_k^2\E\left[\|\bar x^k - \bar y^k\|^2\right] +2\sigma^2\right\},
    \end{align*}
    which gives
    \begin{equation}\label{ineq: X_cons_final_gt}
        \sum_{k=0}^{K}\frac{\alpha_k^p}{n}\E\left[\|\mX_k - \bar \mX_k\|^2\right]\leq 40 \gamma^2 \varrho(m)^2 \sum_{k=0}^{K}\alpha_k^{p+2}\left\{L_{\nabla F}^2 \alpha_k^2\E\left[\|\bar x^k - \bar y^k\|^2\right] +2\sigma^2\right\}.
    \end{equation}
    Combining \eqref{ineq: alpha_m_condition_gt}, \eqref{ineq: U_cons_gt}, \eqref{ineq: z_u_cons_gt}, and \eqref{ineq: X_cons_final_gt}, we obtain that
    \begin{align*}
        &\sum_{k=0}^{K}\frac{\alpha_k^p}{n}\E\left[\|\mZ_k - \bar \mZ_k\|^2\right] \leq 2\varrho(m)\sum_{k=0}^{K}\frac{\alpha_k^{p+2}}{n}\E\left[\|\mU_k - \bar \mU_k\|^2\right] \notag\\
        \leq&\frac{1}{2\gamma^2}\sum_{k=0}^{K}\frac{\alpha_k^{p}}{n}\E\left[\|\mX_k - \bar \mX_k\|^2\right] + 20 \varrho(m)^2 \sum_{k=0}^{K}\alpha_k^{p+2}\left\{L_{\nabla F}^2 \alpha_k^2\E\left[\|\bar x^k - \bar y^k\|^2\right] +2\sigma^2\right\},\\
        \leq &40 \varrho(m)^2 \sum_{k=0}^{K}\alpha_k^{p+2}\left\{L_{\nabla F}^2 \alpha_k^2\E\left[\|\bar x^k - \bar y^k\|^2\right] +2\sigma^2\right\}.
    \end{align*}

\end{proof}

\begin{lemma}[Basic Inequalities of Dual Convergence]\label{lem: z_nabla_F}
\begin{equation}\label{def: delta_r}
    \begin{split}
        \delta^{k} &= \frac{\nabla F(\bar x^k) - \nabla F(\bar x^{k+1})}{\alpha_k} +  
        \frac{1}{n}\sum_{i=1}^{n} \nabla F_i(x^k_i) - \nabla F(\bar x^k),\quad \bar\Delta^{k+1} = \bar v^{k+1} - \frac{1}{n}\sum_{i=1}^{n} \nabla F_i(x^k_i).
    \end{split}
\end{equation}
    Under Assumption \ref{aspt:lipschitz-gradient}, we have
    \begin{equation}\label{ineq:zbar-mse-recursive}
    \begin{split}
        \|\bar z^{k+1} - & \nabla F(\bar x^{k+1})\|^2 \leq (1-\alpha_k) \norm{\bar z^k - \nabla F(\bar x^k)}^2 +  2L_{\nabla F}^2 \alpha_k\norm{\bar x^k -  \bar y^k}^2  + \alpha_k^2 \norm{\bar\Delta^{k+1}}^2\\
        &+ \frac{2L_{\nabla F}^2\alpha_k}{n}\norm{\mX_k - \bar \mX_k}^2  + 2\< \alpha_k \bar\Delta^{k+1}, (1-\alpha_k) \left(\bar z^k - \nabla F(\bar x^k)\right) + \alpha_k\delta^{k}>,
    \end{split}
    \end{equation}
    and
    \begin{equation}\label{ineq:zbar-diff}
    \begin{split}
        \norm{\bar z^{k+1} - \bar z^{k}}^2 & \leq  \alpha_k^2 \bigg\{2\norm{\nabla F(\bar x^k) - \bar z^k}^2 +  \frac{2 L_{\nabla F}^2}{n}\norm{\mX_k - \bar \mX_k}^2 + \norm{\bar \Delta^{k+1}}^2 \\
        &\hspace{16em} + 2\<\bar \Delta^{k+1}, \frac{1}{n}\sum_{i=1}^{n} \nabla F_i(x^k_i) - \bar z^k>\bigg\}.
    \end{split}
    \end{equation}
\end{lemma}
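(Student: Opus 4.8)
The lemma is a one‑step expansion of the dual recursion, so the plan is a direct computation. Both algorithms average to the same dual update $\bar z^{k+1} = (1-\alpha_k)\bar z^k + \alpha_k \bar v^{k+1}$ (for \texttt{Prox-DASA} this is \eqref{eq: xd2}; for \texttt{Prox-DASA-GT} it follows from \eqref{eq: xd2_gt} together with $\bar u^{k+1}=\bar v^{k+1}$), so I would treat both cases simultaneously. The first step is to establish the exact identity
\[
\bar z^{k+1} - \nabla F(\bar x^{k+1}) = (1-\alpha_k)\bigl(\bar z^k - \nabla F(\bar x^k)\bigr) + \alpha_k \delta^k + \alpha_k \bar\Delta^{k+1},
\]
obtained by writing $\nabla F(\bar x^{k+1}) = (1-\alpha_k)\nabla F(\bar x^k) + \alpha_k\nabla F(\bar x^k) + \bigl(\nabla F(\bar x^{k+1}) - \nabla F(\bar x^k)\bigr)$, substituting $\bar v^{k+1} = \bar\Delta^{k+1} + \frac1n\sum_i\nabla F_i(x_i^k)$, and matching the leftover terms against the definitions in \eqref{def: delta_r}.

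For \eqref{ineq:zbar-mse-recursive} I would square this identity, splitting off $b:=\alpha_k\bar\Delta^{k+1}$ via $\|a+b\|^2 = \|a\|^2 + 2\langle a,b\rangle + \|b\|^2$ and \emph{keeping} the cross term $2\langle \alpha_k\bar\Delta^{k+1},\,(1-\alpha_k)(\bar z^k - \nabla F(\bar x^k)) + \alpha_k\delta^k\rangle$ untouched (it is deliberately retained for the telescoping argument in the main proof). Then bound $\|a\|^2 = \bigl\|(1-\alpha_k)(\bar z^k - \nabla F(\bar x^k)) + \alpha_k\delta^k\bigr\|^2 \le (1-\alpha_k)\|\bar z^k - \nabla F(\bar x^k)\|^2 + \alpha_k\|\delta^k\|^2$ by convexity of $\|\cdot\|^2$ (since $\alpha_k\in[0,1]$). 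The only substantive step is then $\|\delta^k\|^2 \le 2L_{\nabla F}^2\|\bar x^k - \bar y^k\|^2 + \tfrac{2L_{\nabla F}^2}{n}\|\mX_k - \bar\mX_k\|^2$: apply $\|u+w\|^2\le 2\|u\|^2+2\|w\|^2$ to the two summands of $\delta^k$; for the first use $\bar x^{k+1}-\bar x^k = \alpha_k(\bar y^k - \bar x^k)$ and Assumption \ref{aspt:lipschitz-gradient}; for the second use $\nabla F(\bar x^k) = \frac1n\sum_i\nabla F_i(\bar x^k)$, Jensen, and $L_{\nabla F_i}$‑Lipschitzness to get $\bigl\|\frac1n\sum_i(\nabla F_i(x_i^k) - \nabla F_i(\bar x^k))\bigr\|^2 \le \frac{L_{\nabla F}^2}{n}\|\mX_k-\bar\mX_k\|^2$. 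Multiplying by $\alpha_k$ produces exactly the stated coefficients.

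For \eqref{ineq:zbar-diff} I would use $\bar z^{k+1} - \bar z^k = \alpha_k(\bar v^{k+1} - \bar z^k)$, so $\|\bar z^{k+1}-\bar z^k\|^2 = \alpha_k^2\|\bar v^{k+1}-\bar z^k\|^2$; write $\bar v^{k+1} - \bar z^k = \bar\Delta^{k+1} + \bigl(\frac1n\sum_i\nabla F_i(x_i^k) - \bar z^k\bigr)$, expand the square (again keeping the cross term $2\langle\bar\Delta^{k+1},\,\frac1n\sum_i\nabla F_i(x_i^k) - \bar z^k\rangle$), and bound $\bigl\|\frac1n\sum_i\nabla F_i(x_i^k) - \bar z^k\bigr\|^2 \le 2\bigl\|\frac1n\sum_i\nabla F_i(x_i^k) - \nabla F(\bar x^k)\bigr\|^2 + 2\|\nabla F(\bar x^k) - \bar z^k\|^2 \le \tfrac{2L_{\nabla F}^2}{n}\|\mX_k-\bar\mX_k\|^2 + 2\|\nabla F(\bar x^k) - \bar z^k\|^2$ exactly as above.

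\textbf{Main obstacle.} There is no real difficulty; the lemma is pure bookkeeping. The one point requiring care is \emph{not} prematurely bounding or Young‑splitting the two inner‑product terms: they are kept because $x_i^k,\bar z^k,\delta^k\in\setF_k$ and $\E[\bar\Delta^{k+1}\mid\setF_k]=0$ under Assumptions \ref{aspt: Unbiasness}–\ref{aspt: independence}, so these terms are martingale‑difference‑like and are handled only when \eqref{ineq:zbar-mse-recursive} is telescoped later; the remaining care is simply tracking constants so the $2L_{\nabla F}^2$ and $2$ coefficients come out as stated.
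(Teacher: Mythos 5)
Your proposal is correct and follows essentially the same route as the paper: the same exact identity $\bar z^{k+1}-\nabla F(\bar x^{k+1})=(1-\alpha_k)(\bar z^k-\nabla F(\bar x^k))+\alpha_k\delta^k+\alpha_k\bar\Delta^{k+1}$, the same convexity bound on the squared norm with the $\bar\Delta^{k+1}$ cross terms deliberately retained, the same Lipschitz/Jensen bound on $\|\delta^k\|^2$ via $\bar x^{k+1}-\bar x^k=\alpha_k(\bar y^k-\bar x^k)$, and the same expansion of $\alpha_k^2\|\bar v^{k+1}-\bar z^k\|^2$ for the second inequality. Your remark about why the inner products must be kept unbounded (they are conditionally mean-zero and handled at the telescoping stage) matches the paper's intent exactly.
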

\begin{proof}
    By definitions in \eqref{def: delta_r}, we have
    \begin{equation*}
        \bar z^{k+1} - \nabla F(\bar x^{k+1}) = (1-\alpha_k) \left(\bar z^k - \nabla F(\bar x^k)\right) + \alpha_k\delta^{k}  + \alpha_k \bar\Delta^{k+1},
    \end{equation*}
    Hence, we can get
    \begin{align*}
        &\norm{\bar z^{k+1} - \nabla F(\bar x^{k+1})}^2 \\
        &= \norm{(1-\alpha_k) \left(\bar z^k - \nabla F(\bar x^k)\right) + \alpha_k\delta^{k}}^2  + \alpha_k^2 \norm{\bar\Delta^{k+1}}^2 + 2\< \alpha_k \bar\Delta^{k+1}, (1-\alpha_k) \left(\bar z^k - \nabla F(\bar x^k)\right) + \alpha_k\delta^{k}>\\
        &\leq (1-\alpha_k) \norm{\bar z^k - \nabla F(\bar x^k)}^2 + \alpha_k \norm{\delta^{k}}^2 + \alpha_k^2 \norm{\bar\Delta^{k+1}}^2 + 2\< \alpha_k \bar\Delta^{k+1}, (1-\alpha_k) \left(\bar z^k - \nabla F(\bar x^k)\right) + \alpha_k\delta^{k}>
    \end{align*}
    where the inequality uses the convexity of $\|\cdot\|^2$. In addition, we have
    \begin{align*}
        \norm{\delta^{k}}^2 &\leq 2\norm{\frac{\nabla F(\bar x^k) - \nabla F(\bar x^{k+1})}{\alpha_k}}^2 + 2\norm{\avein\left(\nabla F_i(x_i^k) - \nabla F_i(\bar x^k)\right)}^2 \\
        &\leq 2L_{\nabla F}^2 \norm{\bar x^k - \bar y^k}^2 + \frac{2L_{\nabla F}^2}{n}\norm{\mX_k - \bar \mX_k}^2, \\
    \end{align*}
    which completes the proof of \eqref{ineq:zbar-mse-recursive}. The inequality \eqref{ineq:zbar-diff} can be proved similarly by noting that
    \begin{align}
        & \norm{\bar z^{k+1} - \bar z^k}^2  = \alpha_k^2 \norm{-\bar z^k + \bar v^{k+1}}^2\notag \\
        &= \alpha_k^2 \norm{(\nabla F(\bar x^k) - \bar z^k) + \left(\frac{1}{n}\sum_{i=1}^{n}\left(\nabla F_i(x^k_i) - \nabla F_i(\bar x^k)\right)\right) + \alpha_k \bar \Delta^{k+1}}^2 \notag\\
        &= \alpha_k^2 \bigg\{\norm{(\nabla F(\bar x^k) - \bar z^k) + \left(\frac{1}{n}\sum_{i=1}^{n}\left(\nabla F_i(x^k_i) - \nabla F_i(\bar x^k)\right)\right)}^2  + \norm{\bar \Delta^{k+1}}^2 + 2\<\bar \Delta^{k+1}, \frac{1}{n}\sum_{i=1}^{n} \nabla F_i(x^k_i) - \bar z^k>\bigg\}.\notag
    \end{align}
\end{proof}

\begin{lemma}[]\label{lem: psi_yplus_ybar} Under Assumption \ref{aspt:Psi},
\begin{equation}\label{ineq: delta_psi_2}  
    \Psi(\bar y^k)  - \Psi(y_+^k)\leq \< \bar z^k + \gamma^{-1}(\bar y^k - \bar x^k), y^k_+ - \bar y^k  > + \frac{\gamma}{2n}\norm{\mZ_k - \bar \mZ_k}^2 + \frac{\gamma^{-1}}{2n}\norm{\mX_k - \bar \mX_k }^2. 
\end{equation}
\end{lemma}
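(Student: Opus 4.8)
The plan is to reduce the inequality to node-wise applications of Lemma~\ref{lem:eta-smooth}(b) together with the convexity of $\Psi$, and then to carefully account for the gap between evaluating the proximal map at each node's iterate $(x_i^k,z_i^k)$ versus at the averages $(\bar x^k,\bar z^k)$.

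First I would use the convexity of $\Psi$ (Assumption~\ref{aspt:Psi}) and $\bar y^k=\frac1n\sum_i y_i^k$ to get $\Psi(\bar y^k)\le \frac1n\sum_i\Psi(y_i^k)$, hence $\Psi(\bar y^k)-\Psi(y_+^k)\le \frac1n\sum_i\big(\Psi(y_i^k)-\Psi(y_+^k)\big)$. Since $y_i^k=\prox_\Psi^\gamma(x_i^k-\gamma z_i^k)$, Lemma~\ref{lem:eta-smooth}(b) applied with $x=x_i^k$, $z=z_i^k$, $y_+=y_i^k$ and test point $y=y_+^k$ gives $\Psi(y_i^k)-\Psi(y_+^k)\le \langle z_i^k+\gamma^{-1}(y_i^k-x_i^k),\, y_+^k-y_i^k\rangle$. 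Writing $a_i:=z_i^k+\gamma^{-1}(y_i^k-x_i^k)$ and $b_i:=y_+^k-y_i^k$, so that $\bar a=\bar z^k+\gamma^{-1}(\bar y^k-\bar x^k)$ and $\bar b=y_+^k-\bar y^k$, averaging yields $\Psi(\bar y^k)-\Psi(y_+^k)\le \frac1n\sum_i\langle a_i,b_i\rangle$.

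Next I would split via the identity $\frac1n\sum_i\langle a_i,b_i\rangle=\langle \bar a,\bar b\rangle+\frac1n\sum_i\langle a_i-\bar a,\,b_i-\bar b\rangle$; the first term is exactly $\langle \bar z^k+\gamma^{-1}(\bar y^k-\bar x^k),\,y_+^k-\bar y^k\rangle$, the inner product in the statement. For the fluctuation term I use $b_i-\bar b=\bar y^k-y_i^k$ and $a_i-\bar a=(z_i^k-\bar z^k)+\gamma^{-1}(y_i^k-\bar y^k)-\gamma^{-1}(x_i^k-\bar x^k)$. The pairing of the $\gamma^{-1}(y_i^k-\bar y^k)$ piece against $\bar y^k-y_i^k$ contributes $-\gamma^{-1}\|y_i^k-\bar y^k\|^2$, and the two remaining cross terms I bound by Young's inequality as $\langle z_i^k-\bar z^k,\bar y^k-y_i^k\rangle\le \frac{\gamma}{2}\|z_i^k-\bar z^k\|^2+\frac{1}{2\gamma}\|y_i^k-\bar y^k\|^2$ and $-\gamma^{-1}\langle x_i^k-\bar x^k,\bar y^k-y_i^k\rangle\le \frac{1}{2\gamma}\|x_i^k-\bar x^k\|^2+\frac{1}{2\gamma}\|y_i^k-\bar y^k\|^2$. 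Summing the $\|y_i^k-\bar y^k\|^2$ coefficients gives $\tfrac1{2\gamma}+\tfrac1{2\gamma}-\tfrac1\gamma=0$, so these terms cancel exactly, leaving $\langle a_i-\bar a,b_i-\bar b\rangle\le \frac{\gamma}{2}\|z_i^k-\bar z^k\|^2+\frac{1}{2\gamma}\|x_i^k-\bar x^k\|^2$. Averaging over $i$ and recalling $\frac1n\sum_i\|z_i^k-\bar z^k\|^2=\frac1n\|\mZ_k-\bar\mZ_k\|^2$ and likewise for $\mX_k$ completes the proof.

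I do not expect a genuine obstacle here; the only point requiring care is the precise choice of weights in the two Young's inequalities so that the $\|y_i^k-\bar y^k\|^2$ contributions cancel against the $-\gamma^{-1}\|y_i^k-\bar y^k\|^2$ term rather than merely being absorbed — this is what makes the right-hand side depend only on the consensus errors of $\mX_k$ and $\mZ_k$, with the stated constants $\gamma/(2n)$ and $\gamma^{-1}/(2n)$.
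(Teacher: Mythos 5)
Your proposal is correct and follows essentially the same route as the paper's proof: convexity of $\Psi$ plus Lemma~\ref{lem:eta-smooth}(b) applied node-wise, the covariance-type decomposition $\frac1n\sum_i\<a_i,b_i> = \<\bar a,\bar b> + \frac1n\sum_i\<a_i-\bar a,b_i-\bar b>$, and the same two Young's inequalities whose $\frac{1}{2\gamma}\norm{y_i^k-\bar y^k}^2$ contributions cancel against the $-\gamma^{-1}\norm{y_i^k-\bar y^k}^2$ term. The constants and the cancellation mechanism match the paper exactly.
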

\begin{proof}
By the convexity of $\Psi$ and part (b) of Lemma \ref{lem:eta-smooth}, we have
\begin{align}
    \Psi(\bar y^k) & - \Psi(y_+^k) \overset{\text{cvx}}{\leq} \frac{1}{n}\sum_{i=1}^{n} \left(\Psi(y_i^k) - \Psi(y^k_+)\right) \overset{\text{Lemma \ref{lem:eta-smooth} (b)}}{\leq} \frac{1}{n}\sum_{i=1}^{n}\< z_i^k + \gamma^{-1} (y_i^k - x_i^k) , y^k_+ - y_i^k >\notag\\
    &= \< \bar z^k + \gamma^{-1}(\bar y^k - \bar x^k), y^k_+ - \bar y^k  > + \frac{1}{n}\sum_{i=1}^{n} \< z_i^k - \bar z^k + \gamma^{-1} (y_i^k -\bar y^k + \bar x^k - x_i^k)  ,  \bar y^k - y_i^k>\notag\\
    &\leq \< \bar z^k + \gamma^{-1}(\bar y^k - \bar x^k), y^k_+ - \bar y^k  > + \frac{\gamma}{2n}\norm{\mZ_k - \bar \mZ_k}^2 + \frac{1}{2n\gamma}\norm{\mX_k - \bar \mX_k}^2. \notag
\end{align}
The equality above comes from the fact that for sequences $\{a_i\}_{1\leq i\leq n}, \{b_i\}_{1\leq i\leq n} \in \realset^d$, we have
$$\sum_{i=1}^{n} \<a_i - \frac{1}{n}\sum_{i=1}^{n}a_i,b_i - \frac{1}{n} \sum_{i=1}^{n}b_i> = \sum_{i=1}^{n} \<a_i, b_i> - \left(\frac{1}{n}\sum_{i=1}^{n}a_i\right)\left( \frac{1}{n}\sum_{i=1}^{n}b_i\right).$$ 
The last inequality above is obtained by Young's inequalities:
\begin{align}
    \< z^k_i - \bar z^k, \bar y^k - y^k_i> &\leq  \frac{\gamma}{2} \norm{z^k_i - \bar z^k}^2 + \frac{1}{2\gamma} \norm{y^k_i - \bar y^k}^2, \notag\\
    \gamma^{-1}\< \bar x^k - x^k_i, \bar y^k - y^k_i> &\leq \frac{1}{2\gamma}\norm{x^k_i - \bar x^k}^2  + \frac{1}{2\gamma} \norm{y^k_i - \bar y^k}^2.\notag
\end{align}
\end{proof}

\begin{lemma}[Basic Lemma of Merit Function Difference]\label{lem: main-merit-func-diff}
Let $W(\bar x^k, \bar z^k)$ be the merit function defined in \eqref{def: merit_fun} with $\lambda = \frac{\gamma^{-1}}{8 L_{\nabla F}^2}$. Under Assumption \ref{aspt:lipschitz-gradient}, \ref{aspt:Psi}, for any $k \geq 0$, setting  $\alpha_k \leq \min\{\frac{\gamma^{-1}}{8 L_{\nabla F}}, \frac{\gamma^{-1}}{8 C_\gamma}, \frac{\gamma^{-1}}{32C_\gamma L_{\nabla F}^2}\}$,  we have
\begin{equation*}
     W(\bar x^{k+1}, \bar z^{k+1}) - W(\bar x^{k}, \bar z^{k}) \leq - \alpha_k \left\{\Theta^k  +  \Upsilon^k + \alpha_k \Lambda^k +  r^{k+1} \right\},
\end{equation*}
where
\begin{equation}\label{def:Theta-Lambda-Upsilon-r}
\begin{split}
    \Theta^k = & \left\{\frac{\gamma^{-1}}{4} \|\bar x^k  - \bar y^k\|^2 + \frac{\lambda}{4} \norm{\nabla F(\bar x^k) - \bar z^k}^2 \right\},  \quad \Lambda^k = \left\{\frac{C_\gamma + 2\lambda}{2}\norm{\bar \Delta^{k+1}}^2\right\},\\
    \Upsilon^k = & \left\{\frac{2\gamma(1 + 4\gamma^2L_{\nabla F}^2)}{n}\norm{\mZ_k - \bar \mZ_k}^2 + \frac{2\left(\gamma^{-1} + 3\gamma L_{\nabla F}^2 \right)}{n}\norm{\mX_k - \bar \mX_k}^2\right\},\\
    r^{k+1}  = & \< \bar \Delta^{k+1},  \bar x^k - y^k_+ + C_\gamma \alpha_k\left(\frac{1}{n}\sum_{i=1}^{n} \nabla F_i(x^k_i) - \bar z^k\right) + 2\lambda \left( (1-\alpha_k) \left(\bar z^k - \nabla F(\bar x^k)\right) + \alpha_k\delta^{k}\right)>.
\end{split}
\end{equation}
\end{lemma}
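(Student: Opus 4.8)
The plan is to expand $W(\bar x^{k+1},\bar z^{k+1})-W(\bar x^{k},\bar z^{k})$ into its three constituents — the function-value change $\Phi(\bar x^{k+1})-\Phi(\bar x^{k})$, the primal-convergence change $[\Psi(\bar x^{k+1})-\eta(\bar x^{k+1},\bar z^{k+1})]-[\Psi(\bar x^{k})-\eta(\bar x^{k},\bar z^{k})]$, and the dual change $\lambda(\|\nabla F(\bar x^{k+1})-\bar z^{k+1}\|^{2}-\|\nabla F(\bar x^{k})-\bar z^{k}\|^{2})$ — bound each, and recombine. Throughout I would use the two averaged-update identities $\bar x^{k+1}-\bar x^{k}=\alpha_{k}(\bar y^{k}-\bar x^{k})$ and $\bar z^{k+1}-\bar z^{k}=\alpha_{k}(\bar v^{k+1}-\bar z^{k})$; the second holds for \emph{both} algorithms, since $\bar u^{k}=\bar v^{k}$ by an easy induction using $\mW\mathbf 1=\mathbf 1$ and $\mU^{0}=\mV^{0}$, so the whole argument lives at the level of averaged quantities and is algorithm-agnostic. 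I write $g^{k}:=\nabla F(\bar x^{k})-\bar z^{k}$ and $y^{k}_{+}=\prox_{\Psi}^{\gamma}(\bar x^{k}-\gamma\bar z^{k})$, and recall $\bar\Delta^{k+1}=\bar v^{k+1}-\tfrac1n\sum_{i}\nabla F_{i}(x^{k}_{i})$ has zero conditional mean.

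For the function-value change I would apply $L_{\nabla F}$-smoothness of $F$ and convexity of $\Psi$ along $\bar x^{k+1}=(1-\alpha_{k})\bar x^{k}+\alpha_{k}\bar y^{k}$, producing $\alpha_{k}\langle\nabla F(\bar x^{k}),\bar y^{k}-\bar x^{k}\rangle+\alpha_{k}(\Psi(\bar y^{k})-\Psi(\bar x^{k}))$ plus an $\alpha_{k}^{2}$ remainder scaled by $L_{\nabla F}$. For the $\eta$-change I would use Lemma~\ref{lem:eta-smooth}(a): as $\nabla\eta$ is $C_{\gamma}$-Lipschitz, $-[\eta(\bar x^{k+1},\bar z^{k+1})-\eta(\bar x^{k},\bar z^{k})]\le-\langle\nabla\eta(\bar x^{k},\bar z^{k}),(\bar x^{k+1}-\bar x^{k},\bar z^{k+1}-\bar z^{k})\rangle+\tfrac{C_{\gamma}}{2}(\|\bar x^{k+1}-\bar x^{k}\|^{2}+\|\bar z^{k+1}-\bar z^{k}\|^{2})$, into which I would substitute the explicit partial gradients $\nabla_{x}\eta=-\bar z^{k}-\gamma^{-1}(y^{k}_{+}-\bar x^{k})$ and $\nabla_{z}\eta=y^{k}_{+}-\bar x^{k}$; the residual $\Psi(\bar x^{k+1})-\Psi(\bar x^{k})$ in the primal term is bounded once more by convexity.

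The crux is reorganizing the $\alpha_{k}$-linear part, which after collecting the above has the schematic form $\alpha_{k}[\langle\nabla F(\bar x^{k})+\bar z^{k}+\gamma^{-1}(y^{k}_{+}-\bar x^{k}),\bar y^{k}-\bar x^{k}\rangle+2(\Psi(\bar y^{k})-\Psi(\bar x^{k}))-\langle y^{k}_{+}-\bar x^{k},\bar v^{k+1}-\bar z^{k}\rangle]$. I would (i) use Lemma~\ref{lem: psi_yplus_ybar} to replace $\Psi(\bar y^{k})-\Psi(y^{k}_{+})$ by $\langle\bar z^{k}+\gamma^{-1}(\bar y^{k}-\bar x^{k}),y^{k}_{+}-\bar y^{k}\rangle$ plus $\tfrac1n$-scaled consensus errors of $\mX_{k},\mZ_{k}$, and Lemma~\ref{lem:eta-smooth}(b) with test point $\bar x^{k}$ to bound $\Psi(y^{k}_{+})-\Psi(\bar x^{k})$; after the cancellations (the $\pm2\langle\bar z^{k},\bar y^{k}-\bar x^{k}\rangle$ drop) and completing the square in $\bar y^{k}-\bar x^{k}$ and $y^{k}_{+}-\bar x^{k}$, the linear part becomes $-\tfrac{\gamma^{-1}}{2}\|\bar x^{k}-\bar y^{k}\|^{2}$ plus $\langle g^{k},\bar y^{k}-y^{k}_{+}\rangle$, consensus errors, and inner products against $\bar\Delta^{k+1}$; (ii) invoke Lemma~\ref{lem: consensus_y} so that $\|\bar y^{k}-y^{k}_{+}\|^{2}\le\tfrac2n(\|\mX_{k}-\bar\mX_{k}\|^{2}+\gamma^{2}\|\mZ_{k}-\bar\mZ_{k}\|^{2})$, whence $\langle g^{k},\bar y^{k}-y^{k}_{+}\rangle$ is absorbed by Young's inequality into $\tfrac{\lambda}{4}\|g^{k}\|^{2}$ plus consensus; (iii) split $\bar v^{k+1}-\bar z^{k}=\bar\Delta^{k+1}+(\tfrac1n\sum_{i}\nabla F_{i}(x^{k}_{i})-\nabla F(\bar x^{k}))+g^{k}$, routing the $\bar\Delta^{k+1}$ parts into the remainder $r^{k+1}$ and bounding the middle term (consensus, via $L_{\nabla F}$-smoothness) and the cross-$g^{k}$ term by Young.

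Finally I would add the dual change via Lemma~\ref{lem: z_nabla_F}, eq.~\eqref{ineq:zbar-mse-recursive}: this contributes the good term $-\lambda\alpha_{k}\|g^{k}\|^{2}$, a term $2\lambda L_{\nabla F}^{2}\alpha_{k}\|\bar x^{k}-\bar y^{k}\|^{2}=\tfrac{\gamma^{-1}}{4}\alpha_{k}\|\bar x^{k}-\bar y^{k}\|^{2}$ (this is the reason for setting $\lambda=\tfrac{\gamma^{-1}}{8L_{\nabla F}^{2}}$), a $\tfrac1n$-consensus term, the variance term $\lambda\alpha_{k}^{2}\|\bar\Delta^{k+1}\|^{2}$, and one further conditionally mean-zero inner product. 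Summing everything, the coefficient of $\|\bar x^{k}-\bar y^{k}\|^{2}$ is $(-\tfrac{\gamma^{-1}}{2}+\tfrac{\gamma^{-1}}{4})\alpha_{k}$ plus $\alpha_{k}^{2}$-order terms scaled by $L_{\nabla F}+C_{\gamma}$, which is $\le-\tfrac{\gamma^{-1}}{4}\alpha_{k}$ once $\alpha_{k}$ meets the three stated thresholds; the $\|g^{k}\|^{2}$ contributions collapse against $-\lambda\alpha_{k}\|g^{k}\|^{2}$ to $\le-\tfrac{\lambda}{4}\alpha_{k}\|g^{k}\|^{2}$; the consensus terms assemble into $-\alpha_{k}\Upsilon^{k}$, the $\|\bar\Delta^{k+1}\|^{2}$ terms into $-\alpha_{k}^{2}\Lambda^{k}$, and all $\langle\bar\Delta^{k+1},\cdot\rangle$ terms into $-\alpha_{k}r^{k+1}$, which is exactly the claimed bound. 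The main obstacle is the bookkeeping: one must keep the proximal-optimality slack from Lemma~\ref{lem:eta-smooth}(b), the $C_{\gamma}$-smoothness remainder of $\eta$, and the several Young splits synchronized so that neither $\|\bar x^{k}-\bar y^{k}\|^{2}$ nor $\|g^{k}\|^{2}$ leaks, and check that the single choice of $\lambda$ together with the stated bounds on $\alpha_{k}$ simultaneously close all of these estimates.
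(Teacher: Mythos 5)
Your proposal is correct and follows essentially the same route as the paper's proof: the same decomposition of the merit-function difference, the same use of the $C_\gamma$-smoothness of $\eta$ with its explicit partial gradients, the prox-optimality/strong-convexity inequality at $y^k_+$, Lemma~\ref{lem: psi_yplus_ybar} and Lemma~\ref{lem: consensus_y} for the consensus terms, the recursion \eqref{ineq:zbar-mse-recursive} for the dual term, and the same rationale ($2\lambda L_{\nabla F}^2=\gamma^{-1}/4$) for the choice of $\lambda$, with all $\langle\bar\Delta^{k+1},\cdot\rangle$ terms routed into $r^{k+1}$. The only differences are in intermediate bookkeeping constants (e.g., the paper's linear part gives $-\tfrac{5\gamma^{-1}}{8}\|\bar x^k-\bar y^k\|^2$ before the dual and $\alpha_k^2$ corrections rather than your $-\tfrac{\gamma^{-1}}{2}$), which do not affect the validity of the argument.
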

\begin{proof}
    By the smoothness of $F$ and $\eta$, we have
\begin{align}
    &F(\bar x^{k+1}) - F(\bar x^k) \notag\\
    \leq &\<\nabla F(\bar x^k), \bar x^{k+1} - \bar x^k> + \frac{L_{\nabla F}}{2}\|\bar x^{k+1} - \bar x^k\|^2 = -\alpha_k\<\nabla F(\bar x^k), \bar x^k - \bar y^k> + \frac{L_{\nabla F}\alpha_k^2}{2}\|\bar x^k - \bar y^k\|^2\label{ineq: delta_f}\\
    &\eta(\bar x^k, \bar z^k) - \eta(\bar x^{k+1}, \bar z^{k+1})\notag\\
    \leq & \<-\bar z^k - \gamma^{-1} (y^k_+ - \bar x^k), \bar x^k - \bar x^{k+1}> + \<y^k_+ - \bar x^k, \bar z^k - \bar z^{k+1}> + \frac{C_\gamma}{2}\left(\|\bar x^{k+1}- \bar x^k\|^2 + \|\bar z^{k+1} - \bar z^k\|^2\right) \notag\\
    = &2\alpha_k\<\bar z^k, y^k_+-\bar x^k> + \gamma^{-1}\alpha_k\|\bar x^k - y^k_+\|^2 + \alpha_k\<\bar v^{k+1}, \bar x^k - \bar y^k>  \notag \\
    & \quad + \alpha_k \<\bar z^k + \gamma^{-1}(y^k_+-\bar x^k) + \bar v^{k+1} , \bar y^k - y^k_+>  + \frac{C_\gamma}{2}\left(\alpha_k^2\|\bar x^k-\bar y^k\|^2 + \|\bar z^{k+1} - \bar z^k\|^2\right).\label{ineq: delta_eta_1}
\end{align}
Since $y^{k}_{+}$ is the minimizer of a $1/\gamma$-strongly convex function, i.e.,
\begin{equation*}
    \<\bar z^k, y^k_+ - \bar x^k> + \frac{1}{2\gamma}\|y^k_+ - \bar x^k\|^2 +\Psi(y^k_+) \leq \Psi(\bar x^k)  - \frac{1}{2\gamma}\|y^k_+ - \bar x^k\|^2,
\end{equation*}
which together with \eqref{ineq: delta_eta_1} gives
\begin{align}
    &\eta(\bar x^k, \bar z^k) - \eta(\bar x^{k+1},\bar z^{k+1}) \notag \\
    \leq &-\gamma^{-1}\alpha_k\|\bar x^k - y^k_+\|^2 + \alpha_k\<\bar v^{k+1}, \bar x^k - \bar y^k> + \alpha_k\< \bar z_k + \gamma^{-1}(y^k_+-\bar x^k) + \bar v^{k+1}, \bar y^k - y^k_+> \notag\\
    & + 2\alpha_k \left(\Psi(\bar x^k) - \Psi(y^k_+)\right) +\frac{C_\gamma}{2}\left(\|\bar x^{k+1}-\bar x^k\|^2 + \|\bar z^{k+1} - \bar z^k\|^2\right). \label{ineq: delta_eta_2}
\end{align}
By the convexity of $\Psi$, we have
\begin{equation}\label{ineq: delta_psi_1}
    \Psi(\bar x^{k+1}) - \Psi(\bar x^k) \leq (1-\alpha_k) \Psi(\bar x^k) + \alpha_k \Psi(\bar y^k) - \Psi(\bar x^k) = \alpha_k \left( \Psi(\bar y^k) - \Psi(\bar x^k_i) \right).
\end{equation}
Combining \eqref{ineq: delta_f}, \eqref{ineq: delta_eta_2}, and \eqref{ineq: delta_psi_1}, we have
\begin{equation}\label{ineq: merit_diff_1}
    \begin{aligned}
        &\left[\Phi(\bar x^{k+1}) + \Psi(\bar x^{k+1}) - \eta(\bar x^{k+1}, \bar z^{k+1})\right] - \left[\Phi(\bar x^{k}) + \Psi(\bar x^k) - \eta(\bar x^{k}, \bar z^{k})\right]  \\
    \leq &- \gamma^{-1}\alpha_k\|\bar x^k - y^k_{+}\|^2  + \alpha_k \< \bar v^{k+1}-\nabla F(\bar x^k), \bar x^k - \bar y^k> + 2\alpha_k(\Psi(\bar y^k) - \Psi(y_+^k))\\
    & + \alpha_k\< \bar z^k + \gamma^{-1}(y^k_+ -\bar x^k) + 
    \bar v^{k+1}, \bar y^k - y^k_+> +\frac{(L_{\nabla F}+C_\gamma)\alpha_k^2}{2}\|\bar x^k - \bar y^k\|^2 + \frac{C_{\gamma}}{2} \|\bar z^{k+1} - \bar z^k\|^2.
    \end{aligned}
\end{equation}
Removing non-smooth terms in \eqref{ineq: merit_diff_1} using \eqref{ineq: delta_psi_2} in Lemma \ref{lem: psi_yplus_ybar}, and re-organizing \eqref{ineq: merit_diff_1} using the decomposition that $\bar z^{k+1} - \bar z^k = \alpha_k(-\bar z^k + \bar v^{k+1}) = \alpha_k (\nabla F(\bar x^k) - \bar z^k) + \alpha_k (\frac{1}{n}\sum_{i=1}^{n}(\nabla F_i(x^k_i) - \nabla F_i(\bar x^k))) + \alpha_k \bar \Delta^{k+1}$, we can get
\begin{align}
     &\left[\Phi(\bar x^{k+1}) + \Psi(\bar x^{k+1}) - \eta(\bar x^{k+1}, \bar z^{k+1})\right] - \left[\Phi(\bar x^{k}) + \Psi(\bar x^k) - \eta(\bar x^{k}, \bar z^{k})\right]  \notag\\
    \leq & \underbrace{\gamma^{-1}\alpha_k\bigg\{-\|\bar x^k  - y^k_{+}\|^2  + \< (y^k_+ -\bar y^k) + (\bar x^k - \bar y^k) , \bar y^k - y^k_+> \bigg\}}_{\varkappa_1}  \notag \\
    & + \underbrace{\alpha_k \<  \frac{1}{n}\sum_{i=1}^{n} \left( \nabla F_i (x^k_i) - \nabla F_i(\bar x^k) \right), \bar x^k - y^k_+ >}_{\varkappa_2} + \underbrace{\alpha_k\< \nabla F(\bar x^k) - \bar z^k,   \bar y^k - y^k_+>}_{\varkappa_3} +  \alpha_k \< \bar \Delta^{k+1},  \bar x^k - y^k_+>  \notag\\
    & \frac{(L_{\nabla F}+C_\gamma)\alpha_k^2}{2}\|\bar x^k - \bar y^k\|^2 + \underbrace{\frac{C_{\gamma}}{2} \|\bar z^{k+1} - \bar z^k\|^2}_{\varkappa_4} + \frac{\gamma\alpha_k}{n}\norm{\mZ_k - \bar \mZ_k}^2 + \frac{\gamma^{-1}\alpha_k}{n}\norm{\mX_k - \bar \mX_k}^2.\notag
\end{align}
To further simplify the above inequalities, we analyze the terms $\varkappa_1, \varkappa_2, \varkappa_3, \varkappa_4$ separately as follows:
\begin{align}
\varkappa_1 = & \gamma^{-1}\alpha_k \left\{- \norm{\bar x^k - \bar y^k}^2 - \< \bar x^k - \bar y^k, \bar y^k - y^k_+ > - 2 \norm{\bar y^k - y^k_{+}}^2\right\} \leq -\frac{7\gamma^{-1}\alpha_k}{8} \norm{\bar x^k - \bar y^k}^2,\notag\\
\varkappa_2 \leq &\, 2\gamma\alpha_k\norm{\frac{1}{n}\sum_{i=1}^{n} \left( \nabla F_i (x^k_i) - \nabla F_i(\bar x^k) \right)}^2 + \frac{\gamma^{-1}\alpha_k}{8} \norm{ \bar x^k - y^k_+}^2\notag\\
\leq &\, \frac{2\gamma \alpha_k L_{\nabla F}^2}{n} \norm{\mX_k - \bar \mX_k}^2+ \frac{\gamma^{-1}\alpha_k}{4} \norm{ \bar x^k - \bar y^k}^2 + \frac{\gamma^{-1}\alpha_k}{4} \norm{ \bar y^k - y^k_+}^2\notag,\\
\varkappa_3 \leq &\, \frac{\lambda \alpha_k}{2} \norm{\nabla F(\bar x^k) - \bar z^k}^2 + \frac{\lambda^{-1}\alpha_k}{2} \norm{\bar y^k - y^k_+}^2,   \notag\\ 
\varkappa_4 \leq &\, \frac{C_\gamma \alpha_k^2}{2} \left\{2\norm{\nabla F(\bar x^k) - \bar z^k}^2 +  \frac{2 L_{\nabla F}^2}{n}\norm{\mX_k - \bar \mX_k}^2 + \norm{\bar \Delta^{k+1}}^2 + 2\<\bar \Delta^{k+1}, \frac{1}{n}\sum_{i=1}^{n} \nabla F_i(x^k_i) - \bar z^k> \right\}. \notag
\end{align}
Combining the above results with \eqref{ineq:zbar-mse-recursive} in Lemma \ref{lem: z_nabla_F} and the definition of $W(\bar x^k, \bar z^k)$ in \eqref{def: merit_fun}, we have
\begin{align}
     &W(\bar x^{k+1}, \bar z^{k+1}) - W(\bar x^{k}, \bar z^{k}) \leq  \alpha_k\left\{- \frac{5}{8}\gamma^{-1} + \frac{(L_{\nabla F} + C_\gamma)\alpha_k}{2} + 2\lambda L_{\nabla F}^2 \right\}\|\bar x^k  - \bar y^k\|^2 \notag \\
     &+ \alpha_k\left\{ - \frac{\lambda}{2} + C_\gamma \alpha_k \right\}\norm{\nabla F(\bar x^k) - \bar z^k}^2 + \frac{C_\gamma \alpha_k^2}{2}\norm{\bar \Delta^{k+1}}^2 +  \frac{(\gamma^{-1}  + 2\lambda^{-1})\alpha_k}{4} \norm{y^k_+ - \bar y^k}^2 \notag \\
    & + \frac{\gamma\alpha_k}{n}\norm{\mZ_k - \bar \mZ_k}^2 + \frac{\left(\gamma^{-1}+ 2\gamma L_{\nabla F}^2 + 2\lambda L_{\nabla F}^2 + C_\gamma L_{\nabla F}^2\alpha_k \right)\alpha_k}{n}\norm{\mX_k - \bar \mX_k}^2 \notag\\
     & +  \alpha_k \underbrace{\< \bar \Delta^{k+1},  \bar x^k - y^k_+ + C_\gamma \alpha_k\left(\frac{1}{n}\sum_{i=1}^{n} \nabla F_i(x^k_i) - \bar z^k\right) + 2\lambda \left( (1-\alpha_k) \left(\bar z^k - \nabla F(\bar x^k)\right) + \alpha_k\delta^{k}\right)>}_{r^{k+1}}. \label{ineq: merit_diff_2}
\end{align}
In addition, from Lemma \ref{lem: consensus_y}, we already know
\begin{equation*}
    \norm{y^k_+ - \bar y^k}^2  \leq \frac{2}{n} \left\{\|\mX_{k} - \bar \mX_k\|^2 + \gamma^2\|\mZ_{k} - \bar \mZ_k\|^2 \right\}.
\end{equation*}
Finally, choosing $\alpha_k$ such that $ \alpha_k \leq \min\{\frac{\gamma^{-1}}{8 L_{\nabla F}}, \frac{\gamma^{-1}}{8 C_\gamma}, \frac{\gamma^{-1}}{32C_\gamma L_{\nabla F}^2}\}$ and $\lambda = \frac{\gamma^{-1}}{8 L_{\nabla F}^2}$, we can re-organize the terms in \eqref{ineq: merit_diff_2} as
\begin{align*}
     &W(\bar x^{k+1}, \bar z^{k+1}) - W(\bar x^{k}, \bar z^{k})  \notag\\
    \leq & - \alpha_k \underbrace{\left\{\frac{\gamma^{-1}}{4} \|\bar x^k  - \bar y^k\|^2 + \frac{\lambda}{4} \norm{\nabla F(\bar x^k) - \bar z^k}^2 \right\}}_{\Theta^k} + \alpha_k^2 \underbrace{\left\{\frac{C_\gamma + 2\lambda}{2}\norm{\bar \Delta^{k+1}}^2\right\}}_{\Lambda^k}\notag + \alpha_k r^k \\
    & + \alpha_k \underbrace{\left\{\frac{2\gamma(1 + 4\gamma^2L_{\nabla F}^2)}{n}\norm{\mZ_k - \bar \mZ_k}^2 + \frac{2\left(\gamma^{-1} + 3\gamma L_{\nabla F}^2 \right)}{n}\norm{\mX_k - \bar \mX_k}^2\right\}}_{\Upsilon^k},
\end{align*}
which completes the proof.
\end{proof}

\section{Discussions}\label{sec: conserror}
In this section we briefly discuss two different functions that measure the consensus violation of vectors among agents. Suppose agent $i$ has $x_i\in\realset^d$, our consensus error (see, e.g., Definition \ref{def: stat&cons}) can be viewed as
\[
    f(x_1, ..., x_n) = \frac{1}{n}\sum_{i=1}^{n}\|x_i - \bar x\|^2,
\]
where $\bar x := \frac{1}{n}\sum_{i=1}^{n}x_i$, while \texttt{SPPDM} in \cite{wang2021distributed} defines (see Eq. (4a), (4b), (5a), (5b), and (41) in \cite{wang2021distributed})
\begin{equation}\label{eq: sppdm_cons}
    \begin{aligned}
        g_W(x_1, ..., x_n) &= \sum_{i\sim j, 1\leq i< j\leq n}\|x_i - x_j\|^2  \\
    &=\frac{1}{2}\sum_{i= j \text{ or } i\sim j}\left(\|x_i-\bar x\|^2 + \|x_j-\bar x\|^2 - 2\<x_i-\bar x, x_j-\bar x>\right) 
    \end{aligned}
\end{equation}
over a connected network whose weighted adjacency matrix (i.e., mixing matrix) is $W$, and the stationarity therein is defined by using $g_W$. $i\sim j$ means agents $i$ and $j$ are neighbors. Note that in general the relationship between $f$ and $g_W$ largely depends on $W$. We consider several special cases:
\begin{itemize}
    \item W is a complete graph. By \eqref{eq: sppdm_cons} we have
    \begin{align*}
        g_W(x_1,...,x_n) &= n\sum_{i=1}^{n}\|x_i - \bar x\|^2 - \<\sum_{i=1}^{n}\left(x_i - \bar x\right), \sum_{j=1}^{n}\left(x_j - \bar x\right)> = n^2 f(x_1,...,x_n).
    \end{align*}
    \item W is a cycle. By \eqref{eq: sppdm_cons} we have
    \[
        g_W(x_1,...,x_n) \leq \sum_{i\sim j, 1\leq i<j\leq n}2\left(\|x_i - \bar x\|^2 + \|x_j - \bar x\|^2\right) = 4nf(x_1,...,x_n).
    \]
    \item W is a simple path such that $i$ and $i+1$ are adjacent for all $1\leq i\leq n-1$, and $x_i = i\in \realset$. Note that in this case we can directly obtain $g_W(x_1,...,x_n) = n - 1$. For $f$ we have
    \[
        f(x_1,...,x_n) = \frac{1}{n}\sum_{i=1}^{n}\left(\frac{n+1}{2} - i\right)^2 = \Theta(n^2),
    \]
    which implies $g_W = \Theta(\frac{f}{n})$.
\end{itemize}
We know from the above examples that the order (in terms of $n$) of $g_W / f$ can range from $\frac{1}{n}$ to $n^2$. Hence these two types of consensus error are not comparable if no additional assumptions are given, and thus we only include \texttt{SPPDM} in the experiments and do not have it in Table \ref{tab:summary}.

\end{document}